\documentclass[11pt, reqno]{amsart}
\usepackage{amssymb}
\usepackage{amsmath}
\usepackage{amscd}
\usepackage{amsfonts}
\usepackage{mathrsfs}
\usepackage{stmaryrd}
\usepackage{tikz}
\usetikzlibrary{arrows}
\usepackage[T1]{fontenc}
\allowdisplaybreaks

\makeatletter
\DeclareFontFamily{U}{tipa}{}
\DeclareFontShape{U}{tipa}{m}{n}{<->tipa10}{}
\newcommand{\arc@char}{{\usefont{U}{tipa}{m}{n}\symbol{62}}}%

\newcommand{\arc}[1]{\mathpalette\arc@arc{#1}}

\newcommand{\arc@arc}[2]{%
  \sbox0{$\m@th#1#2$}%
  \vbox{
    \hbox{\resizebox{\wd0}{\height}{\arc@char}}
    \nointerlineskip
    \box0
  }%
}
\makeatother

\setcounter{tocdepth}{1}

\usepackage{bbm}


\theoremstyle{definition}
\newtheorem{theorem}{Theorem}[section]

\newtheorem{thm}[theorem]{Theorem}
\newtheorem{prop}[theorem]{Proposition}

\newtheorem{defn}[theorem]{Definition}
\newtheorem{lemma}[theorem]{Lemma}
\newtheorem{cor}[theorem]{Corollary}
\newtheorem{prop-def}{Proposition-Definition}[section]

\newtheorem{rema}[theorem]{Remark}

\newtheorem{exam}[theorem]{Example}
\newtheorem{nota}[theorem]{Notation}

\newcommand{\N}{{\mathbb N}}
\newcommand{\C}{{\mathbb C}}
\newcommand{\Z}{{\mathbb Z}}

\newcommand{\one}{\mathbf{1}}
\renewcommand{\d}{\mathbf{d}}
\newcommand{\wt}{\mbox{\rm wt}\ }

\newcommand{\otimesdots}{\otimes \cdots \otimes}
\newcommand{\nn}{\nonumber}
 
\textheight 23.2cm \textwidth 16cm \topmargin -0.8cm

\begin{document}

\setlength{\oddsidemargin}{0cm} \setlength{\evensidemargin}{0cm}
\baselineskip=18pt

\title{On the cohomology of meromorphic open-string vertex algebras}
\author{Fei Qi}

\begin{abstract}

This paper generalizes Huang's cohomology theory 
of grading-restricted vertex algebras to 
meromorphic open-string vertex algebras (MOSVAs hereafter), which are 
noncommutative generalizations of grading-restricted 
vertex algebras introduced by Huang. The vertex operators 
for these algebras satisfy associativity but do not necessarily satisfy the commutativity. 
Moreover, the MOSVA and its bimodules considered in this paper 
do not necessarily have finite-dimensional homogeneous subspaces, 
though we do require that they have  lower-bounded gradings.
The construction and results in this paper will be used in a joint paper 
by Huang and the author to give a cohomological criterion of the 
reductivity for modules for grading-restricted vertex algebras

\end{abstract}

\maketitle
\tableofcontents

\section{Introduction}

In \cite{Hcoh}, Huang introduced the cohomology of a
grading-restricted vertex algebra. Vertex (operator) algebras are algebraic structures formed by vertex operators satisfying both commutativity and
associativity. They arose naturally in both mathematics
and physics (see \cite{BPZ}, \cite{B} and \cite{FLM}).
In the context of quantum field theory, the commutativity
of vertex operators is closely related to the locality of
meromorphic fields. The associativity of vertex operators
is a strong form of the operator product expansion. Thus,
a vertex algebra can be viewed as an analogue to commutative
associative algebras. The cohomology introduced in \cite{Hcoh} can be viewed as an analogue of Harrison cohomology. 

In the present paper, we introduce the cohomology of a meromorphic open-string vertex algebra. The notion of meromorphic open-string vertex algebra (MOSVA)
was introduced by Huang in \cite{H-MOSVA}. It is a noncommutative
generalization of the notion of vertex algebra, as the vertex operators
in a MOSVA satisfy associativity but do not necessarily satisfy
commutativity. Thus a MOSVA can be viewed as an analogue to associative algebras. The cohomology introduced in this paper can be viewed as an analogue of Hochschild cohomology.


In the study of associative algebras and commutative associative algebras,
the Hochschild cohomologies and the Harrison cohomologies play
important roles. Let $A$ be an associative algebra. For an $A$-bimodule 
$M$,
we use $\hat{H}^{n}(A, M)$ to denote the
$n$-th Hochschild cohomology of $A$ with
coefficients in $M$. When $A$ is commutative and $M$ is a module (viewed as
$A$-bimodule with the left and right $A$-module structures to be both 
the one
from the original $A$-module structure), we use $H^{n}(A, M)$ to denote the
the $n$-th Harrison cohomology of $A$ with coefficients in $M$.
Then the following results are well-known:
\begin{enumerate}

\item The first Hochschild cohomology $\hat{H}^1(A, M)$ is isomorphic to 
the
quotient of the space of derivations from $A$ to $M$ by the space of
inner derivations from $A$ to $M$. When $A$ is commutative, the first
Harrison cohomology $H^1(A, M)$ is isomorphic to the space of derivations from $A$ to $M$.

\item The second Hochschild cohomology $\hat{H}^2(A, A)$ is in one-to-one
correspondence with the set of first-order deformations of $A$.
When $A$ is commutative,
the second Harrison cohomology $H^2(A, A)$ is in one-to-one
correspondence with the set of the first-order deformations of $A$.

\item All the left $A$-modules are completely reducible if and only if
for every $A$-bimodules $B$ and every $n\in \Z_+$, the Hochschild
cohomology $\hat{H}^n(A, B)= 0$.
\end{enumerate}

In \cite{H1st-sec-coh}, using the cohomology established in \cite{Hcoh}, Huang established the analogues of the results (1)
and (2) for a grading-restricted vertex algebra $V$ and
grading-restricted $V$-modules. To define this cohomology, Huang introduced a larger complex in \cite{Hcoh}
such that the complex for the grading-restricted
vertex algebra is a subcomplex,
just as the Harrison complex is a subcomplex of the Hochschild complex
for the commutative associative algebra.
In particular, the larger complex can be viewed as the analogue of
the Hochschild complex. But this complex was defined in \cite{Hcoh}
only for a grading-restricted vertex algebra. 

In this paper, we give the definition of this larger complex for meromorphic open-string vertex algebras $V$ and $V$-bimodules $W$ that are not necessarily grading-restricted but satisfy the pole-order condition. The pole-order was introduced in \cite{Q-Rep} and is satisfied by all the existing examples of MOSVAs and modules. Using the cohomology of this larger complex, we can establish the analogues of results (1), (2) and (3). The results (1) and (3) will be presented in \cite{HQ-Red}. The result (2) will be presented in future papers.  



The paper is organized as follows:

In Section 2, we review the notions of the MOSVAs and modules
defined in \cite{H-MOSVA} and \cite{Q-Rep}. 
Since the opposite vertex operator map of the vertex operator map
for a right module defined in \cite{Q-Rep} will be used extensively
in this paper, we also discuss some of its properties that are not 
presented
in \cite{Q-Rep}. Analytic continuation is the main technical part of this study. We will often quote those
lemmas in \cite{Q-Rep} on the analytic extensions in this section.

In Section 3, for a MOSVA $V$ and a $V$-bimodule $W = \coprod\limits_{n\in \C} W_{[n]}$, we discuss $\overline{W}$-valued rational functions, where $\overline{W}= \prod\limits_{n\in \C} W_{[n]}$ is the algebraic completion of $W$. 
We will also study series of $\overline{W}$-valued rational functions and prove that the associativity (of $Y_W^L$ and of $Y_W^{s(R)}$) and commutativity (of $Y_W^L$ and $Y_W^{s(R)}$) hold when acting on $\overline{W}$-valued rational functions satisfying certain convergence conditions.

In Section 4, we first study the maps from $V^{\otimes n}$ to
the space $\widetilde{W}_{z_1, ..., z_n}$ of $\overline{W}$-valued
rational function in $z_1, ..., z_n$ with the only possible poles at
$z_i = z_j$. Such maps satisfying $\d$-conjugation properties,
$D$-derivative properties and composable condition are used to
construct the cochain complex of the cohomology in this section. Our
composable condition is formulated differently from that in \cite{Hcoh},
but it can be shown that they are equivalent by analytic continuation.
The coboundary operators for the cochain complex is defined using
the $\overline{W}$-valued rational functions
that the relevant series converge to. These series might not have a
common region of convergence. But since the rational functions
that they converge to have the same domain, the coboundary
operator is well defined. This is the key (as observed by Huang)
for the cohomology theory to work.

\noindent \textbf{Acknowledgement.} The author is very grateful to Yi-Zhi Huang, who patiently discussed numerous technical details, provided lots insightful observations and corrected several mistakes in the earlier version of the paper.  

\section{Definitions and Immediate Consequences}

\subsection{Definition of the MOSVA}

We first recall the notions of meromorphic open-string vertex algebra and its modules given in \cite{H-MOSVA} and \cite{Q-Rep}. 

\begin{defn}\label{DefMOSVA}
{\rm A {\it meromorphic open-string vertex algebra} (hereafter MOSVA) is a $\Z$-graded vector space 
$V=\coprod_{n\in\Z} V_{(n)}$ (graded by {\it weights}) equipped with a {\it vertex operator map}
\begin{eqnarray*}
   Y_V:  V\otimes V &\to & V[[x,x^{-1}]]\\
	u\otimes v &\mapsto& Y_V(u,x)v
  \end{eqnarray*}
and a {\it vacuum} $\one\in V$, satisfying the following axioms:
\begin{enumerate}
\item Axioms for the grading:
\begin{enumerate}
\item {\it Lower bound condition}: When $n$ is sufficiently negative,
$V_{(n)}=0$.
\item {\it $\d$-commutator formula}: Let $\d_{V}: V\to V$
be defined by $\d_{V}v=nv$ for $v\in V_{(n)}$. Then for every $v\in V$
$$[\d_{V}, Y_{V}(v, x)]=x\frac{d}{dx}Y_{V}(v, x)+Y_{V}(\d_{V}v, x).$$
\end{enumerate}

\item Axioms for the vacuum: 
\begin{enumerate}
\item {\it Identity property}: Let $1_{V}$ be the identity operator on $V$. Then
$Y_{V}(\mathbf{1}, x)=1_{V}$. 
\item {\it Creation property}: For $u\in V$, $Y_{V}(u, x)\mathbf{1}\in V[[x]]$ and 
$\lim_{z\to 0}Y_{V}(u, x)\mathbf{1}=u$.
\end{enumerate}

\item {\it $D$-derivative property and $D$-commutator formula}:
Let $D_V: V\to V$ be the operator
given by
$$D_{V}v=\lim_{x\to 0}\frac{d}{dx}Y_{V}(v, x)\one$$
for $v\in V$. Then for $v\in V$,
$$\frac{d}{dx}Y_{V}(v, x)=Y_{V}(D_{V}v, x)=[D_{V}, Y_{V}(v, x)].$$

\item {\it Rationality}: Let $V'=\coprod_{n\in \Z}V_{(n)}^*$ be the graded dual of $V$. 
For $u_1, \cdots, u_n, v\in V, v'\in V'$, the series
$$\langle v', Y_V(u_1, z_1)\cdots Y_V(u_n, z_n)v\rangle$$
converges absolutely when $|z_1|>\cdots>|z_n|>0$ to a rational function in $z_1,\cdots, z_n$, 
with the only possible poles at $z_i=0, i=1, ... , n$ and $z_i=z_j, 1\leq i\neq j\leq n$. 
For $u_1, u_2, v\in V$ and $v' \in V'$, the series
$$\langle v', Y_V(Y_V(u_1,z_1-z_2)u_2,z_2)v\rangle$$
converges absolutely when $|z_2|>|z_1-z_2|>0$ to a rational function with the only possible poles at 
$z_1=0, z_2=0$ and $z_1=z_2$.

\item {\it Associativity}: For $u_{1}, u_{2}, v\in V$ and
$v'\in V'$, we have 
$$\langle v', Y_{V}(u_{1}, z_{1})Y_{V}(u_{2}, z_{2})v\rangle=\langle v', Y_{V}(Y_{V}(u_{1}, z_{1}-z_{2})u_{2}, z_{2})v\rangle$$
when $|z_{1}|>|z_{2}|>|z_{1}-z_{2}|>0$.
\end{enumerate}  }
Such a meromorphic open-string vertex algebra is denoted by $(V, Y_V, \one)$ or simply by 
$V$. 
\end{defn}


\begin{defn}
Let $V$ be a MOSVA. We say that $V$ satisfies the \textit{pole-order condition} if for every $u_1, v\in V$, there exists $C>0$, such that for every $v'\in V', u_2\in V$, the pole $z_1=0$ of the rational function determined by
$$\langle v', Y_V(u_1, z_1)Y_V(u_2, z_2)v\rangle$$
has order less than $C$. In other words, the order of the pole in the rational function is bounded above by a number that depends only on the choice of $u_1$ and $v$. 
\end{defn}

\begin{rema}\label{RemaPoleCond}
If $V$ has the pole-order condition for two vertex operators, then it is proved in \cite{Q-Rep} that for every $v'\in V', u_1, ..., u_n, v\in V$ and for the rational function determined by 
$$\langle v', Y_V(u_1, z_1)\cdots Y_V(u_n, z_n)v\rangle$$
the order of the pole $z_i=0$ is bounded above by a constant that depends only on $u_i$ and $w$, $i=1, ..., n$; and the order of the pole $z_i= z_j$ is bounded above by a constant that depends only on $u_i$ and $u_j$, $1\leq i < j \leq n$. 
\end{rema}

\subsection{Left modules for MOSVAs}

The notion of left modules for a meromorphic open-string vertex algebra was introduced in \cite{H-MOSVA}. The philosophy is similar to the modules for vertex algebras: all the defining properties of a MOSVA that make sense hold. 

\begin{defn}\label{DefMOSVA-L}
Let $(V, Y_{V}, \one)$ be a meromorphic open-string vertex algebra.
A \textit{left $V$-module} is a $\C$-graded vector space 
$W=\coprod_{m\in \C}W_{[m]}$ (graded by \textit{weights}), equipped with 
a \textit{vertex operator map}
\begin{eqnarray*}
Y_W^L: V\otimes W & \to & W[[x, x^{-1}]]\\
u\otimes w & \mapsto & Y_W^L(u, x)w,
\end{eqnarray*}
an operator $\d_{W}$ of weight $0$ and 
an operator $D_{W}$ of weight $1$, satisfying the 
following axioms:
\begin{enumerate}

\item Axioms for the grading: 
\begin{enumerate}
\item \textit{Lower bound condition}:  When $\text{Re}{(m)}$ is sufficiently negative,
$W_{[m]}=0$. 
\item  \textit{$\mathbf{d}$-grading condition}: for every $w\in W_{[m]}$, $\d_W w = m w$.
\item  \textit{$\mathbf{d}$-commutator formula}: For $u\in V$, 
$$[\mathbf{d}_{W}, Y_W^L(u,x)]= Y_W^L(\mathbf{d}_{V}u,x)+x\frac{d}{dx}Y_W^L(u,x).$$
\end{enumerate}

\item The \textit{identity property}:
$Y_W^L(\one,x)=1_{W}$.

\item The \textit{$D$-derivative property} and the  \textit{$D$-commutator formula}: 
For $u\in V$,
\begin{eqnarray*}
\frac{d}{dx}Y_W^L(u, x)
&=&Y_W^L(D_{V}u, x) \\
&=&[D_{W}, Y_W^L(u, x)].
\end{eqnarray*}

\item \textit{Rationality}: For $u_{1}, \dots, u_{n}\in V, w\in W$
and $w'\in W'$, the series 
$$
\langle w', Y_W^L(u_{1}, z_1)\cdots Y_W^L(u_{n}, z_n)v\rangle
$$
converges absolutely 
when $|z_1|>\cdots >|z_n|>0$ to a rational function in $z_{1}, \dots, z_{n}$
with the only possible poles at $z_{i}=0$ for $i=1, \dots, n$ and $z_{i}=z_{j}$ 
for $i\ne j$. For $u_{1}, u_{2}\in V, w\in W$
and $w'\in W'$, the series 
$$
\langle w', Y_W^L(Y_{V}(u_{1}, z_1-z_{2})u_{2}, z_2)v\rangle
$$
converges absolutely when $|z_{2}|>|z_{1}-z_{2}|>0$ to a rational function
with the only possible poles at $z_{1}=0$, $z_{2}=0$ and $z_{1}=z_{2}$. 

\item \textit{Associativity}: For $u_{1}, u_{2}\in V, w\in W$, 
$w'\in W'$, 
$$
\langle w', 
Y_W^L(u_{1},z_1)Y_W^L(u_{2},z_2)v\rangle
=
\langle w', 
Y_W^L(Y_{V}(u_{1},z_{1}-z_{2})u_{2},z_2)v\rangle
$$
when  $|z_{1}|>|z_{2}|>|z_{1}-z_{2}|>0$. 
\end{enumerate} 
We denote the left $V$-module just defined by $(W, Y_W^L, \d_{W}, D_{W})$ or simply $W$ when there is no confusion. 
\end{defn}





\subsection{Right modules for MOSVAs}

\begin{defn}
Let $(V, Y_{V}, \one)$ be a meromorphic open-string vertex algebra.
A \textit{right module for $V$} 
is a $\C$-graded vector space 
$W=\coprod_{m\in \C}W_{[m]}$ (graded by \textit{weights}), equipped with 
a \textit{vertex operator map}
\begin{eqnarray*}
Y_W^R: W\otimes V&\to& W[[x, x^{-1}]]\\
w\otimes u&\mapsto& Y_W^R(w, x)u,
\end{eqnarray*}
an operator $\d_{W}$ and 
an operator $D_{W}$ of weight $1$, satisfying the 
following axioms:
\begin{enumerate}

\item Axioms for the grading
\begin{enumerate}
\item \textit{Lower bound condition}:  When $\text{Re }{m}$ is sufficiently negative, $W_{[m]}=0$. 
\item \textit{$\mathbf{d}$-grading condition}: for every $w\in W_{[m]}, \d_W w = m w$.
\item \textit{$\d$-commutator formula}: For $w\in W$, 
$$\d_{W}Y_W^R(w,x)-Y_W^R(w,x)\d_{V}= Y_W^R(\d_{W}w,x)+x\frac{d}{dx}Y_W^R(w,x).$$
\end{enumerate}

\item The \textit{Creation property}: For $w\in W$, $Y_W^R(w,x)\one\in W[[x]]$ and 
$\lim\limits_{x\to 0}Y_W^R(w,x)\one=w$.

\item The \textit{$D$-derivative property} and the  \textit{$D$-commutator formula}: 
For $u\in V$,
\begin{eqnarray*}
\frac{d}{dx}Y_W^R(w, x)
&=&Y_W^R(D_{W}w, x) \\
&=&D_{W}Y_W^R(w, x)-Y_W^R(w, x)D_{V}.
\end{eqnarray*}

\item \textit{Rationality}: For $u_{1}, \dots, u_{n}\in V, w\in W$
and $w'\in W'$, the series 
$$
\langle w', Y_W^R(w, z_1)Y_{V}(u_{1}, z_2)\cdots Y_{V}(u_{n-1}, z_n)u_{n}\rangle
$$
converges absolutely 
when $|z_1|>\cdots >|z_n|>0$ to a rational function in $z_{1}, \dots, z_{n}$
with the only possible poles at $z_{i}=0$ for $i=1, \dots, n$ and $z_{i}=z_{j}$ 
for $i\ne j$. For $u_{1}, u_{2}\in V, w\in W$
and $w'\in W'$, the series 
$$
\langle w', Y_W^R(Y_W^R(w, z_1-z_{2})u_{1}, z_2)u_{2}\rangle
$$
converges absolutely when $|z_{2}|>|z_{1}-z_{2}|>0$ to a rational function
with the only possible poles at $z_{1}=0$, $z_{2}=0$ and $z_{1}=z_{2}$. 

\item \textit{Associativity}: For $u_{1}, u_{2}\in V, w\in W$, 
$w'\in W'$, 
$$
\langle w', 
Y_W^R(w,z_1)Y_{V}(u_{1},z_2)u_{2}\rangle
=
\langle w', 
Y_W^R(Y_W^R(w,z_{1}-z_{2})u_{1},z_2)u_{2}\rangle
$$
when  $|z_{1}|>|z_{2}|>|z_{1}-z_{2}|>0$. 
\end{enumerate} 


When there is no confusion, we also denote the right $V$-module just defined by $(W, Y_W^R, \d_{W}, D_{W})$ or simply $W$. 
\end{defn}



\subsection{Bimodules for MOSVAs}

\begin{defn}
Let $(V, Y_V, \one)$ be a meromorphic open-string vertex algebra. A \textit{$V$-bimodule} is a vector space equipped with a left $V$-module structure and right $V$-module structure such that these two strutcure are compatible. 
More precisely, a \textit{$V$-bimodule} is a $\C$-graded vector space 
$$W=\coprod_{n\in \C}W_{[n]}$$
equipped with a \textit{left vertex operator map}
\begin{eqnarray*}
Y_{W}^{L}: V\otimes W&\to& W[[x, x^{-1}]]\\
u\otimes w&\mapsto& Y_{W}^{L}(u, x)v,
\end{eqnarray*}
a \textit{right vertex operator map}
\begin{eqnarray*}
Y_{W}^{R}: W\otimes V&\to& W[[x, x^{-1}]]\\
w\otimes u&\mapsto& Y_{W}^{R}(w, x)u,
\end{eqnarray*}
and linear operators $d_W, D_W$ on $W$ satisfying the following conditions.
\begin{enumerate}

 \item $(W, Y_W^L, \d_W, D_W)$ is a left $V$-module.

 \item $(W, Y_W^R, \d_W, D_W)$ is a right $V$-module.

\item \textit{Compatibility}:
\begin{enumerate}
\item 
\textit{Rationality of left and right vertex operator maps}: For $u_1, ..., u_n, u_{n+1}, ..., u_{n+m}\in V$, $w\in W$, the series
$$\langle w', Y_W^L(u_1, z_1) \cdots Y_W^L(u_n, z_n) Y_W^R(w, z_{n+1}) Y_V(u_{n+1}, z_{n+2})\cdots Y_V(u_{n+m-1}, z_{n+m})u_{n+m}\rangle$$
converges absolutely in the region $|z_1|>|z_2|>\cdots >|z_n| > |z_{n+1}| > \cdots > |z_{n+m}|> 0$ to a rational function in $z_1, ..., z_n, z_{n+1}, ..., z_{n+m}$. 
\item \textit{Associativity for left and right vertex operator maps}: For $u, v\in V$, $w\in W$ and $w'\in W'$, the series
$$\langle w', Y_W^L(u, z_1)Y_W^R(w,z_2)v\rangle$$
$$\langle w', Y_W^R(Y_W^L(u,z_1-z_2)w, z_2)v\rangle$$
converges absolutely in the region $|z_1|>|z_2|>0$ and $|z_2|>|z_1-z_2|>0$,  respectively, to a common rational function in 
$z_1$ and $z_2$ with the only possible poles at $z_1, z_2=0$ and $z_1 = z_2$.
\end{enumerate}
\end{enumerate}
\end{defn}

The $V$-bimodule just defined is denoted 
by  $(W, Y_W^L, Y_W^R, \d_W, D_W)$ or simply by $W$ when there is no confusion. 

\begin{defn}
Let $V$ be a MOSVA that satisfies the pole-order condition. Let $W$ be a $V$-bimodule. We say $W$ satisfies the \textit{pole-order condition} if
\begin{enumerate}
\item $W$ satisfies the following pole-order condition as a left $V$-module: for every $u_1\in V, w\in W$, there exists $C>0$, such that for every $w'\in W', u_2\in V$, the pole $z_1=0$ of the rational functions determined by
$$\langle w', Y_W^L(u_1, z_1)Y_W^L(u_2, z_2)w\rangle$$
has order less than $C$. 
\item $W$ satisfies the following pole-order condition as a right $V$-module: for every $u_2\in V, w\in W$, there exists $C>0$, such that for every $w'\in W', u_1\in V$, the pole $z_1=0$ of the rational functions determined by
$$\langle w', Y_W^R(w, z_1)Y_V(u_1, z_2)u_2\rangle$$
has order less than $C$. 
\item For every $u_1, u_2\in V$, there exists $C>0$, such that for every $w'\in W', w\in W$, the pole $z_1=0$ of the rational functions determined by
$$\langle w', Y_W^L(u_1, z_1)Y_W^R(w, z_2)u_2\rangle$$
has order less than $C$. 
\end{enumerate}
\end{defn}

\begin{rema}
Similarly as Remark \ref{RemaPoleCond}, the pole-order condition for two vertex operators implies the pole-order condition for any numbers of vertex operators. 
\end{rema}

\begin{rema}
We will build the cohomology theory for MOSVAs and bimodules satisfying the pole-order condition. Though a cohomology theory can also be built without this condition, the resulting theory is different and has no applications at this moment. Hence we choose not to do that here.  
\end{rema}


\subsection{Bimodules in terms of opposite algebras}

Recall that in \cite{Q-Rep}, we proved that for a MOSVA $(V, Y_V, \one)$, the space $V$ with the following vertex operator
\begin{align*}
Y_V^s: V\otimes V &\to V[[x, x^{-1}]]\\
Y_V^s(u, x)v & =  e^{xD_V} Y_V(v, -x)u
\end{align*}
and the vacuum $\one \in V$ also forms a MOSVA, called the opposite MOSVA of $V$ and denoted $V^{op}$. We also proved that a right $V$-module $(W, Y_W^R, \d_W, D_W)$ is equivalent to a left $V^{op}$-module $(W, Y_W^{s(R)}, \d_W, D_W)$, where $Y_W^{s(R)}$ is defined by
$$Y_W^{s(R)}(v, x)w = e^{xD_W}Y_W^R(w, -x)v.$$




\begin{thm}
Let $W$ be a $V$-bimodule. Then the compatibility condition can be formulated in terms of $Y_W^L$ and $Y_W^{s(R)}$ as follows
\begin{enumerate}
\item For every $n\in \Z_+$, $l=1, ..., n$, $u_1, ..., u_n\in V$, $w\in W$, $w'\in W'$,
$$\langle w', Y_W^L(u_1, z_1)\cdots Y_W^L(u_l, z_l)Y_W^{s(R)}(u_{l+1}, z_{l+1})\cdots Y_W^{s(R)}(u_n, z_n)w\rangle$$
converges absolutely when $|z_1|>\cdots > |z_n|>0$  to a rational function with the only possible poles at $z_i=0, i=1, ...,n$ and $z_i = z_j, 1\leq i < j\leq n$. 
\item For every $u_1, u_2\in V, w\in W, w'\in W'$, 
$$\langle w', Y_W^L(u_1, z_1)Y_W^{s(R)}(u_2, z_2)w\rangle$$
$$\langle w', Y_W^{s(R)}(u_2, z_2)Y_W^L(u_1, z_1)w\rangle$$
converges absolutely to a common rational function in the region $|z_1|>|z_2|>0$ and $|z_2|>|z_1|>0$, respectively. 
\end{enumerate}
\end{thm}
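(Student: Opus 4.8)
The plan is to translate the two compatibility conditions stated in the definition of a $V$-bimodule (rationality and associativity for $Y_W^L$ and $Y_W^R$) into the corresponding statements for $Y_W^L$ and $Y_W^{s(R)}$, using the relation $Y_W^{s(R)}(v,x)w = e^{xD_W}Y_W^R(w,-x)v$ established in \cite{Q-Rep}. The key analytic tool throughout is that the substitution $z \mapsto -z$ together with the shift operator $e^{zD_W}$ implements analytic continuation of the relevant $\overline{W}$-valued rational functions, so the equalities I need are really equalities of rational functions that I verify in one region and then extend to the regions where the other orderings of absolute values hold.

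First I would prove part (2), the two-variable case, since it contains the essential idea. Starting from the compatibility associativity axiom, the series $\langle w', Y_W^L(u,z_1)Y_W^R(w,z_2)v\rangle$ and $\langle w', Y_W^R(Y_W^L(u,z_1-z_2)w,z_2)v\rangle$ converge in their respective regions to a common rational function $f(z_1,z_2)$. To reach $Y_W^{s(R)}$ I apply the defining formula, which amounts to replacing the second slot by $e^{z_2 D_W}Y_W^R(w,-z_2)u_2$-type expressions; concretely, I would match $\langle w', Y_W^L(u_1,z_1)Y_W^{s(R)}(u_2,z_2)w\rangle$ to the value of $f$ after the substitution induced by $Y_W^{s(R)}(u_2,z_2)w = e^{z_2 D_W}Y_W^R(w,-z_2)u_2$ and the $D_W$-conjugation property, which moves the $e^{z_2 D_W}$ through $\langle w', Y_W^L(u_1,z_1)\,\cdot\,\rangle$ as a translation in the $z_1$ variable. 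The commutativity of $Y_W^L$ and $Y_W^{s(R)}$, i.e. that $\langle w', Y_W^L(u_1,z_1)Y_W^{s(R)}(u_2,z_2)w\rangle$ and $\langle w', Y_W^{s(R)}(u_2,z_2)Y_W^L(u_1,z_1)w\rangle$ converge to the same rational function, then follows because both are obtained from the single rational function $f$ by analytic continuation from their respective regions of convergence; the underlying associativity between the left action and the right action is symmetric in exactly the way that produces a single common function with poles only at $z_1,z_2=0$ and $z_1=z_2$.

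Next I would handle part (1) by iterating the two-variable argument. The mixed product with $l$ left operators and $n-l$ operators $Y_W^{s(R)}$ is obtained from the compatibility rationality axiom for $n$ left operators followed by one $Y_W^R$ followed by $m$ copies of $Y_V$, again by repeatedly applying the formula $Y_W^{s(R)}(v,x)w = e^{xD_W}Y_W^R(w,-x)v$ and pushing the exponentials through by $D_W$-conjugation. Here the pole-order condition (and Remark \ref{RemaPoleCond}, extended to the bimodule setting) guarantees that the successive analytic continuations stay within the class of rational functions with the prescribed poles, so that the absolute convergence in the iterated region $|z_1|>\cdots>|z_n|>0$ is preserved and the limit is a rational function of the stated form.

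The main obstacle I expect is bookkeeping the analytic continuation carefully: the series defining $Y_W^L$-products and $Y_W^{s(R)}$-products do not share a common region of convergence, so the equalities can only be asserted as equalities of rational functions, and one must argue that the domains of these rational functions genuinely coincide before identifying them. This is precisely the subtlety flagged in the introduction. In practice I would isolate this as the crux, invoking the analytic-extension lemmas from \cite{Q-Rep} to justify that each application of the $z\mapsto -z$ substitution and the $e^{zD_W}$ shift extends the rational function across the wall separating the regions, and that the resulting function has no new poles. Once that extension principle is in place, both parts follow by combining it with the definition of $Y_W^{s(R)}$ and the $D_W$-derivative and $\d_W$-commutator properties, with the rest being routine manipulation.
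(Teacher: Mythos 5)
Your proposal follows essentially the same route as the paper's proof: both start from the bimodule compatibility axioms for $Y_W^L$ and $Y_W^R$, rewrite everything through $Y_W^{s(R)}(v,x)w = e^{xD_W}Y_W^R(w,-x)v$ together with $D_W$-conjugation (variable shifts), and identify the resulting series as expansions of one common rational function via the analytic-continuation lemmas of \cite{Q-Rep}. The only differences are cosmetic: the paper treats part (1) first and makes explicit the intermediate step of nesting the chain of $Y_V$'s into iterated $Y_W^R$'s by right-module associativity (plus stripping the leftover $e^{-z_{l+1}D_W}$ by Remark 5.3 of \cite{Q-Rep}), steps which your ``iterate the two-variable argument'' contains implicitly.
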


\begin{proof}
We only give a sketch here. From the compatibility condition of $Y_W^L$ and $Y_W^R$, 
$$\langle w', Y_W^L(u_1, z_1-z_{l+1})\cdots Y_W^L(u_l, z_l - z_{l+1}) Y_W^R(w, -z_{l+1}) Y_V(u_n, -z_{l+1}+z_n) \cdots Y_V(u_{l+2}, -z_{l+1}+z_{l+2})u_{l+1}\rangle$$
converges absolutely when 
$$|z_1-z_{l+1}|>\cdots > |z_l-z_{l+1}| > |z_{l+1}| > |z_{l+1}-z_n|> \cdots > |z_{l+1}-z_{l+2}| > 0$$ to a rational function with the only possible poles at $z_i = 0, i=1, ..., n$ and $z_i= z_j, 1\leq i < j \leq n$. Then one repeatedly uses associativity and Lemma 4.7 in \cite{Q-Rep} to argue that
$$\langle w', Y_W^L(u_1, z_1-z_{l+1})\cdots  Y_W^L(u_l, z_l - z_{l+1}) Y_W^R(\cdots Y_W^R(Y_W^R(w, -z_n)u_{n}, -z_{n-1}+z_n)u_{n-1}, \cdots, -z_{l+1}+z_{l+2})u_{l+1}\rangle$$
converges absolutely when 
\begin{align*}
|z_1-z_{l+1}|>\cdots > |z_l - z_{l+1}| > |z_{l+1}-z_{l+2}|+ \cdots + |z_{n-1}-z_n|+|z_n|;\\ 
|z_i - z_{i+1}|>|z_{i+1}- z_{i+2}|+ \cdots + |z_{n-1}-z_n| + |z_n| > 0, i = 1, ..., n-1.
\end{align*}
to the same rational function. If we further expand the negative powers of $z_i - z_{l+1}$ as a power series in $z_{l+1}$ for $i=1, ..., l$, and further expand the negative powers of $-z_i + z_{i+1}$ as a power series $z_{i+1}$ for $i = l+1, ..., n-1$, the resulting series in $z_1, ..., z_n$ is precisely 
$$\langle w', e^{-z_{l+1}D_W} Y_W^L(u_1, z_1)\cdots Y_W^L(u_l, z_l)e^{z_{l+1}D_W}Y_W^{R}(\cdots e^{z_n D_W} Y_W^R(w, -z_n)u_n, \cdots, -z_{l+1})u_{l+1}\rangle$$
One can repeatedly use Lemma 4.5 and Lemma 4.7 in \cite{Q-Rep} that this series converges absolutely when 
$$|z_1|>\cdots > |z_n|> 0$$
to the same rational function. Thus we proved that the series
$$\langle w', e^{-z_{l+1}D_W} Y_W^L(u_1, z_1)\cdots Y_W^L(u_l, z_l) Y_W^{s(R)}(u_{l+1}, z_{l+1})\cdots Y_W^{s(R)}(u_n, z_n)w\rangle$$
converges absolutely when $|z_1|> \cdots > |z_n|> 0$. The conclusion of (1) then follows from Remark 5.3 in \cite{Q-Rep}, which allows us to apply another $e^{z_{l+1}D_W}$ to the front and keep the convergence (though the rational function might change). 

For (2), note that 
$$\langle w', e^{z_2D_W} Y_W^L(u_1, z_1-z_2)Y_W^R(w, -z_2)u_2\rangle = \langle w', e^{z_2D_W} Y_W^R(Y_W^L(u_1, z_1)w, -z_2)u_2$$
when $|z_1-z_2|>|z_2|>|z_1|>0$. Both sides converge to the same rational function. If the negative powers of $z_1-z_2$ in the series on the left-hand-side are expanded as a power series in $z_2$, then the resulting series is precisely $\langle w', Y_W^L(u_1, z_1)Y_W^{s(R)}(u_2, z_2)w\rangle$ and converges absolutely in the region $|z_1|>|z_2|>0, |z_1-z_2|>0$ to the same rational function. We then use Lemma 4.5 in \cite{Q-Rep} to see that $\langle w', Y_W^L(u_1, z_1)Y_W^{s(R)}(u_2, z_2)w\rangle$ converges absolute when $|z_1|>|z_2|>0$ to the same rational function as the right-hand-side, while the right-hand-side is precisely $\langle w', Y_W^s(R)(u_2, z_2)Y_W^L(u_1, z_1)w\rangle$. Thus the conclusion is proved. 
\end{proof}

\begin{rema}
The pole-order condition can also be expressed in terms of $Y_W^{s(R)}$. More precisely, if $V$ is a MOSVA and $W$ is a $V$-bimodule, both of which satisfy the pole-order condition, then
\begin{enumerate}
\item For every $u_1 \in V, w\in W$, there exists $C>0$ such that for every $w'\in W', u_2\in V$, the pole $z_1=0$ of the rational function determined by 
$$\langle w', Y_W^{s(R)}(u_1, z_1)Y_W^{s(R)}(u_2, z_2)w\rangle$$
has order less than $C$. In fact, $C$ can be chosen to be the same upper bound of the order pole $z_1=0$ for $\langle w', Y_W^R(w, z_1)Y_V(u_2, z_2)u_1\rangle$.
\item For every $u_1, u_2\in V$, there exists $C>0$ such that for every $w'\in W', w\in W$, the pole $z_1=z_2$ of the rational function determined by 
$$\langle w', Y_W^{L}(u_1, z_1)Y_W^{s(R)}(u_2, z_2)w\rangle$$
has order less than $C$. In fact, $C$ can be chosen to be the same upper bound of the order pole $z_1=0$ for $\langle w', Y_W^L(u_1, z_1)Y_W^R(w, z_2)u_2\rangle$.
\end{enumerate}
\end{rema}

\begin{rema}\label{RemaPoleCond-1}
Similarly as Remark \ref{RemaPoleCond}, one can prove that for the rational function determined by 
$$\langle w', Y_W^L(u_1, z_1)\cdots Y_W^L(u_l, z_l)Y_W^{s(R)}(u_{l+1}, z_{l+1})\cdots Y_W^{s(R)}(u_n, z_n) w\rangle$$
the order of the pole $z_i=0$ is bounded above by a constant that depends only on $u_i$ and $w$, $i=1, ..., n$; and the order of the pole $z_i= z_j$ is bounded above by a constant that depends only on $u_i$ and $u_j$, $1\leq i < j \leq n$. 
\end{rema}



\section{$\overline{W}$-valued rational functions}

Throughout this section, $V$ is a MOSVA; $W = \coprod_{n\in \C} W_{[n]}$ is a $V$-bimodule that is not necessarily grading-restricted; $W'=\coprod_{n\in \C} W_{[n]}^*$ is the graded dual of $W$. We shall assume that all the pole-order conditions hold for $V$ and $W$. 

We use $\overline{W}$ to denote the algebraic completion $\prod_{n\in \C} W_{[n]}$ of $W$. Note that the dual $(W')^*$ of $W'$ does not coincide with $\overline{W}$. Also note that any homogeneous linear map $L: W\to W$ extends to a map $\overline{W}\to\overline{W}$ by the formal linearity
$$L(\overline{w})=L\left(\sum_{k\in \C} \pi_k \overline{w}\right)=\sum_{k\in\C} L(\pi_k \overline{w}) $$
where $\pi_k$ is the projection of $W$ onto $W_{[k]}$. More generally, any linear map $L: W \to W$ that is a finite linear combination of homogeneous linear maps can be extended to $L: \overline{W} \to \overline{W}$. For convenience, we will not introduce new notations to distinguish the extended map from the original map.

\subsection{$\overline{W}$-valued rational functions}

\begin{defn}\label{arcWRat} 
For $n\in \Z_+$, we consider the configuration space
$$F_n \C = \{(z_1, ..., z_n)\in \C^n: z_i \neq z_j, i\neq j\}$$
A \textit{$\overline{W}$-valued rational function in $z_1, ..., z_n$ with the only possible poles at $z_i = z_j, i\neq j$} is a map
$$\begin{aligned}
f:  F_n \C &\to \overline{W}\\
(z_1, ..., z_n)&\mapsto f(z_1, ..., z_n)
\end{aligned}$$
such that 
\begin{enumerate}
\item For any $w'\in W'$, 
$$\langle w', f(z_1, ..., z_n)\rangle$$
is a rational function in $z_1, ..., z_n$ with the only possible poles at $z_i = z_j, i\neq j$.
\item There exists integers $p_{ij}, 1\leq i < j \leq n$ and a formal series $g(x_1, ..., x_n)\in W[[x_1, ..., x_n]]$, such that for every $w'\in W'$ and $(z_1, ..., z_n)\in F_n\C$,  
$$\prod_{1\leq i < j \leq n} (z_i-z_j)^{p_{ij}} \langle w', f(z_1, ..., z_n)\rangle = \langle w', g(z_1, ..., z_n)\rangle$$
as a polynomial function. 
\end{enumerate}
\end{defn}

For simplicity, we will simply call such maps \textit{$\overline{W}$-valued rational function} when there is no confusion. The space of all such functions will be denoted by $\widetilde{W}_{z_1,...,z_n}$. 

\begin{rema}\label{SecondCond}
From the second condition, we know that the order of poles of the rational function $\langle w', f(z_1, ..., z_n)\rangle$ is independent of the choice of $w'$. So
for every $w'\in W'$, 
$$(z_1, ..., z_n) \mapsto \prod_{1\leq i < j \leq n} (z_i-z_j)^{p_{ij}}\langle w', f(z_1, ..., z_n)\rangle$$
is a holomorphic (in fact, polynomial) function on $\C^n$,  which can be expanded as a multiple power series
$$\sum_{i_1, ..., i_n=0}^\infty a_{i_1...i_n}(w') z_1^{i_1}\cdots z_n^{i_n}$$
For each $i_1, ..., i_n\in \N$, $w'\mapsto a_{i_1...i_n}(w')$ is an element in $(W')^*$. The second condition further specifies that there exists $b_{i_1...i_n}\in W$, such that $a_{i_1...i_n}(w')= \langle w', b_{i_1...i_n}\rangle$. Thus, $\prod_{1\leq i < j \leq n}(z_i - z_j)^{p_{ij}}f(z_1, ..., z_n)$ can be expanded as
$$ \sum_{i_1, ..., i_n= 0}^\infty b_{i_1... i_n} z_1^{i_1}\cdots z_n^{i_n}\in W[[z_1, ..., z_n]]$$
and therefore, $f(z_1, ..., z_n)$ can be expanded as 
$$\frac{\sum\limits_{i_1, ..., i_n= 0}^\infty b_{i_1... i_n} z_1^{i_1}\cdots z_n^{i_n}}{\prod\limits_{1\leq i < j \leq n}(z_i - z_j)^{p_{ij}}} \in W[[z_1, ..., z_n]][(z_1-z_2)^{-1}, ..., (z_{n-1}-z_n)^{-1}]$$
For $1\leq i < j \leq n$, one can further expand the negative powers of $z_i - z_j$ as a power series in $z_j$ and multiply them out. It is clear that in the resulting series
\begin{equation}\label{ExpandRatFunc}
\sum_{k_1, ..., k_n\in \Z} f_{k_1 ... k_n} z_1^{k_1}\cdots z_n^{k_n}
\end{equation}
each coefficient $f_{k_1 ... k_n}$ is a finite sum of various $b_{i_1...i_n}$'s. Thus $f_{k_1 ... k_n}\in W$. So (\ref{ExpandRatFunc}) is a series in $W[[z_1, z_1^{-1}, ..., z_n, z_n^{-1}]]$ that converges absolutely to $f(z_1, ..., z_n)$ in the region 
$$\{(z_1, ..., z_n)\in \C^n: |z_1|>\cdots > |z_n|\}$$
We will also consider the expansion of $f(z_1, ..., z_n)$ in other regions. In all the regions that arise in our applications, all the coefficients of the corresponding series sit in $W$. 
\end{rema}

In an earlier draft of the paper, $f$ takes value in the larger space $(W')^*$. The following observation by Huang says that we can use $\overline{W}$ instead of $(W')^*$:

\begin{prop}\label{nomorearcW-prop}
Let $f: F_n\C \to (W')^*$ satisfying the 	two conditions in Definition \ref{arcWRat}. In addition, assume that there exists a complex number $C$, such that for every $i_1 ,..., i_n \in \N$, the coefficient $b_{i_1, ..., i_n}$ of the power $x_1^{i_1}\cdots x_n^{i_n}$ in the series $g(x_1, ..., x_n)$ are homogeneous, with 
$$\wt b_{i_1...i_n} - i_1 - \cdots - i_n = C$$
Then $f$ takes value in $\overline{W}$. 
\end{prop}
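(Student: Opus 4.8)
The plan is to show that for each fixed point $(z_1, \ldots, z_n) \in F_n\C$ the functional $f(z_1, \ldots, z_n) \in (W')^*$ has all of its weight-homogeneous components lying in $W$, rather than merely in the bidual. Recall that the natural embedding identifies $\overline{W} = \prod_k W_{[k]}$ with the subspace $\prod_k W_{[k]} \subseteq \prod_k W_{[k]}^{**} = (W')^*$, where the $k$-th component of $\phi \in (W')^*$ is the restriction of $\phi$ to the summand $W_{[k]}^* \subseteq W'$, an element of $(W_{[k]}^*)^* = W_{[k]}^{**}$. Thus $\phi$ lies in $\overline{W}$ precisely when each such restriction is represented by an element of $W_{[k]}$ itself.

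First I would pass from $f$ to its numerator. By the second condition of Definition \ref{arcWRat} there are integers $p_{ij}$ and a series $g(x_1, \ldots, x_n) = \sum_{i_1, \ldots, i_n} b_{i_1 \cdots i_n} x_1^{i_1}\cdots x_n^{i_n} \in W[[x_1, \ldots, x_n]]$ with $\prod_{i<j}(z_i - z_j)^{p_{ij}}\langle w', f(z_1, \ldots, z_n)\rangle = \langle w', g(z_1, \ldots, z_n)\rangle$ for all $w'$. Since every point of $F_n\C$ has pairwise distinct coordinates, the scalar $\prod_{i<j}(z_i - z_j)^{p_{ij}}$ is a nonzero complex number, so it suffices to prove $g(z_1, \ldots, z_n) \in \overline{W}$; multiplying by the nonzero scalar $\prod_{i<j}(z_i - z_j)^{-p_{ij}}$ keeps us inside $\overline{W}$.

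The crux is the homogeneity hypothesis. Grouping the monomials of $g$ by total degree $d = i_1 + \cdots + i_n$, I would write $g(z_1, \ldots, z_n) = \sum_{d \geq 0} g_d(z_1, \ldots, z_n)$ with $g_d(z_1, \ldots, z_n) = \sum_{i_1 + \cdots + i_n = d} b_{i_1 \cdots i_n} z_1^{i_1}\cdots z_n^{i_n}$. There are only finitely many ways to write $d$ as a sum of $n$ nonnegative integers, so each $g_d(z_1, \ldots, z_n)$ is a \emph{finite} sum; and by the assumption $\wt b_{i_1\cdots i_n} - (i_1 + \cdots + i_n) = C$ every coefficient in $g_d$ is homogeneous of weight $C + d$, whence $g_d(z_1, \ldots, z_n) \in W_{[C+d]}$. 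So the weight-$(C+d)$ component of $g(z_1, \ldots, z_n)$ is the genuine element $g_d(z_1, \ldots, z_n) \in W_{[C+d]}$, while all other weight components vanish. This is exactly where the new hypothesis does its work: without it a single weight could collect infinitely many monomials, and the resulting infinite sum would in general define only an element of $W_{[k]}^{**}$, not of $W_{[k]}$.

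Finally I would verify that the decomposition $g(z_1, \ldots, z_n) = \sum_{d} g_d(z_1, \ldots, z_n)$ holds as an identity in $(W')^*$, so that the weight components just identified really are those of $g(z_1, \ldots, z_n)$. For any $w' \in W' = \coprod_k W_{[k]}^*$ only finitely many homogeneous components of $w'$ are nonzero, and $g_d$ pairs nontrivially only with the weight-$(C+d)$ component of $w'$; hence $\langle w', g(z_1, \ldots, z_n)\rangle = \sum_d \langle w', g_d(z_1, \ldots, z_n)\rangle$ is a finite sum and agrees with the pairing of $w'$ against $\sum_d g_d(z_1, \ldots, z_n) \in \prod_d W_{[C+d]} \subseteq \overline{W}$. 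Therefore $g(z_1, \ldots, z_n) \in \overline{W}$, and dividing by the nonzero scalar $\prod_{i<j}(z_i - z_j)^{p_{ij}}$ gives $f(z_1, \ldots, z_n) \in \overline{W}$. The only genuinely delicate point is the one highlighted above, namely keeping track of why the homogeneity condition turns each weight-graded piece into a finite sum inside $W_{[k]}$; everything else is bookkeeping with the finite weight-support of elements of $W'$.
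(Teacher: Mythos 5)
Your proof is correct and follows essentially the same route as the paper's: group the monomials of $g$ by total degree, use the homogeneity hypothesis to see that each degree-$d$ piece is a finite sum lying in $W_{[C+d]}$, conclude $g(z_1,\ldots,z_n)\in\overline{W}$, and divide by the nonzero scalar $\prod_{i<j}(z_i-z_j)^{p_{ij}}$. The only difference is that you spell out the compatibility of the decomposition with the pairing against $W'$, a step the paper leaves implicit.
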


\begin{proof}
We write 
$$g(z_1, ..., z_n) = \sum_{k=0}^\infty \sum_{i_1 + \cdots + i_n = k} b_{i_1, ..., i_n} z_1^{i_1}\cdots z_n^{i_n}$$
Note that for every $k\in \N$, 
$$\sum_{i_1 + \cdots + i_n = k} b_{i_1, ..., i_n} z_1^{i_1}\cdots z_n^{i_n}$$
is a finite sum of homogeneous elements in $W$ of weight $k + C$. As $k$ varies, the weight of these elements varies. Thus for fixed $(z_1, ..., z_n)\in F_n\C$, $g(z_1, ..., z_n)$ is an infinite sum of homogeneous elements in $W$, thus an element in $\overline{W}$. As $f(z_1, ..., z_n)$ is simply a quotient of $g(z_1,..., z_n)$ and products of $(z_i-z_j)$, the same holds for $f(z_1, ..., z_n)$. 
\end{proof}

\begin{rema}\label{nomorearcW}
As all the $(W')^*$-valued rational function that will be used in this paper are finite linear combinations of those in Proposition \ref{nomorearcW-prop}, We need only those $\overline{W}$-valued rational functions given in Definition \ref{arcWRat}. 
\end{rema}

\begin{rema}
It is possible to develop a cohomology theory with $(W')^*$-valued rational functions that do not satisfy the second condition. We choose not to do that because we do not need to consider such general MOSVAs and modules. 
\end{rema}

\begin{prop}
Let $n\in \Z_+$ and take $l = 0, ..., n$. Let $u_1, ..., u_n\in V$ and $w\in W$ satisfying the condition that $\forall u\in V, Y_W^L(u, x)w\in W[[x]]$, $Y_W^{s(R)}(u,x)w\in W[[x]]$. Then for every $w'\in W'$, the series
\begin{equation}\label{LeftlRightn-l-1}
\langle w', Y_W^L(u_1, z_1)\cdots Y_W^L(u_l, z_l)Y_W^{s(R)}(u_{l+1},z_{l+1})\cdots Y_W^{s(R)}(u_{n}, z_{n})w\rangle
\end{equation}
converges absolutely when $|z_1|>\cdots >|z_{n}|$ to a rational function with the only possible poles at $z_i=z_j, 1\leq i < j \leq n.$ 
\end{prop}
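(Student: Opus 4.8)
The plan is to take as given the convergence and rationality that hold for an arbitrary $w\in W$, and then to use the extra vanishing hypothesis on $w$ only to remove the poles along $z_i=0$. First I would invoke the theorem above reformulating the compatibility condition in terms of $Y_W^L$ and $Y_W^{s(R)}$, together with Remark \ref{RemaPoleCond-1} (and, for the extreme cases, the left-module rationality when $l=n$ and the right-module rationality giving the rationality of $Y_W^{s(R)}$ when $l=0$): for an arbitrary $w\in W$ the series (\ref{LeftlRightn-l-1}) converges absolutely in the region $|z_1|>\cdots>|z_n|>0$ to a rational function $f(z_1,\dots,z_n)$ whose only possible poles lie at $z_i=0$ and at $z_i=z_j$. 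It then remains to prove two things: that $f$ has no pole along any $z_i=0$, and that the region of absolute convergence may be enlarged to $|z_1|>\cdots>|z_n|$, i.e.\ to include the locus $z_n=0$.

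For the pole at $z_n=0$ I would argue directly. The innermost operator acts on $w$, and by hypothesis $Y_W^L(u_n,z_n)w$ and $Y_W^{s(R)}(u_n,z_n)w$ both lie in $W[[z_n]]$; since each of the remaining operators involves only its own variable, the iterated expansion of $f$ in the region $|z_1|>\cdots>|z_n|>0$ carries the variable $z_n$ along with nonnegative powers only. For fixed generic $z_1,\dots,z_{n-1}$ this iterated expansion is the Laurent expansion of $f$ about $z_n=0$, so the absence of negative powers shows that $f$ is regular at $z_n=0$.

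For a pole at $z_i=0$ with $i<n$ I would move $z_i$ into a difference. If the operators at $z_i$ and $z_{i+1}$ are of the same type (both $Y_W^L$, occurring when $i<l$, or both $Y_W^{s(R)}$, occurring when $l<i<n$), I apply the associativity of that operator to the adjacent pair, rewriting $Y(u_i,z_i)Y(u_{i+1},z_{i+1})$ as $Y(Y_V(u_i,z_i-z_{i+1})u_{i+1},z_{i+1})$ (with $Y_V$ replaced by $Y_V^s$ in the $Y_W^{s(R)}$ case). After this rewriting the variable $z_i$ enters $f$ only through the factor $Y_V(u_i,z_i-z_{i+1})u_{i+1}\in V((z_i-z_{i+1}))$, which has finitely many negative powers of $z_i-z_{i+1}$ and no other dependence on $z_i$; as a function of $z_i$ the continuation $f$ therefore has a pole at most at $z_i=z_{i+1}$ and is regular at $z_i=0$. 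The one remaining case is $i=l$ with $l<n$, where the operator at $z_l$ is the final $Y_W^L$ and its right neighbour is a $Y_W^{s(R)}$; here I use instead the commutativity of $Y_W^L$ and $Y_W^{s(R)}$ to move $Y_W^L(u_l,z_l)$ to the right past every $Y_W^{s(R)}$ so that it acts directly on $w$, and since $Y_W^L(u_l,z_l)w\in W[[z_l]]$ the continuation depends on $z_l$ only through nonnegative powers, so $f$ is again regular at $z_l=0$. Running this over all $i$ shows that the only poles of $f$ are at $z_i=z_j$, and the regularity at $z_n=0$ together with the nonnegativity of the $z_n$-powers extends the absolute convergence from $|z_1|>\cdots>|z_n|>0$ to $|z_1|>\cdots>|z_n|$.

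The step I expect to require the most care is the justification that each rewriting — associativity applied to an adjacent same-type pair sitting \emph{inside} a longer product, and the repeated commutation of $Y_W^L(u_l,z_l)$ through the whole string of $Y_W^{s(R)}$'s — produces a series converging, in its own region, to the \emph{same} rational function $f$. This is precisely where the associativity and commutativity of $Y_W^L$ and $Y_W^{s(R)}$ acting on $\overline{W}$-valued rational functions, together with the analytic-continuation lemmas of \cite{Q-Rep}, must be used; once these identifications of rational functions are established, the pole analysis above is immediate.
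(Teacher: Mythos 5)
Your reduction to the compatibility/rationality results, your treatment of $z_n=0$, and your treatment of $z_l=0$ (commuting $Y_W^L(u_l,z_l)$ past the $Y_W^{s(R)}$'s so that it acts directly on $w$, then invoking $Y_W^L(u_l,z_l)w\in W[[z_l]]$) are all sound and agree with the paper. The gap is in your treatment of the remaining variables, $z_i=0$ for $i<l$ and for $l<i<n$, where you pair adjacent same-type operators. From the rewriting $Y(u_i,z_i)Y(u_{i+1},z_{i+1})\rightsquigarrow Y(Y_V(u_i,z_i-z_{i+1})u_{i+1},z_{i+1})$ you infer that, because the rewritten series involves $z_i$ only through finitely many negative (and infinitely many positive) powers of $z_i-z_{i+1}$, the continuation $f$ ``has a pole at most at $z_i=z_{i+1}$ and is regular at $z_i=0$.'' This inference is invalid: a series in nonnegative powers of $z_i-z_{i+1}$ whose coefficients depend on $z_{i+1}$ can perfectly well sum to a function with a pole at $z_i=0$; for instance $\frac{1}{z_i}=\sum_{m\geq 0}(-1)^m (z_i-z_{i+1})^m z_{i+1}^{-m-1}$ for $|z_i-z_{i+1}|<|z_{i+1}|$. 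A pole at $z_i=0$ does not manifest as negative powers of $z_i-z_{i+1}$; it manifests as negative powers of $z_{i+1}$, of unbounded order, in the coefficients. A symptom of the problem is that your argument for these $i$ never uses the hypothesis on $w$ (associativity and the lower truncation $Y_V(u_i,x)u_{i+1}\in V((x))$ hold for arbitrary $w$), while the conclusion is false for arbitrary $w\in W$: in general these matrix coefficients genuinely have poles at $z_i=0$.

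This is exactly why the paper argues differently. For the $Y_W^{s(R)}$ block it iterates associativity so that $u_i,\dots,u_{n-1}$ are all nested into a single operator $Y_W^{s(R)}\bigl(Y_V^s(u_i,z_i-z_n)\cdots Y_V^s(u_{n-1},z_{n-1}-z_n)u_n,\,z_n\bigr)$ acting on $w$, so that the innermost variable is $z_n$: by the hypothesis on $w$ this series has no negative powers of $z_n$ at all, whereas if $f$ had a pole at $z_i=0$, the expansion producing this series (which writes $z_i=z_n+(z_i-z_n)$ and expands in powers of $z_i-z_n$) would necessarily contain infinitely many negative powers of $z_n$ --- a contradiction. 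After the commutativity step, the same nesting at $z_l$ (now the innermost variable acting on $w$) disposes of $z_{l-1},\dots,z_1$. Your pairing idea can be repaired, e.g.\ by an induction on the number of operators showing that each coefficient $\langle w',\cdots Y_W^{s(R)}((Y_V^s)_k(u_i)u_{i+1},z_{i+1})\cdots w\rangle$ is itself a rational function with no pole at $z_{i+1}=0$, which is incompatible with the unbounded $z_{i+1}$-pole orders that a pole of $f$ at $z_i=0$ would force on the coefficients; but that step --- where the hypothesis on $w$ actually enters --- is missing from your proposal as written.
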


\begin{proof}
From rationality we know that the series (\ref{LeftlRightn-l-1}) 
converges absolutely when $|z_1|>\cdots >|z_{n}|>0$ to a rational function with the only possible poles at $z_i=0, i=1,...,n, z_i=z_j, 1\leq i < j \leq n.$ From the assumption, we see that the lowest power of $z_n$ is nonnegative. Therefore, $z_n$ is allowed to take zero. So $z_n=0$ s not a pole. 

Assume that $z_{n-1}=0$ is a pole. By associativity, the series 
\begin{align*}
\langle w', Y_W^L(u_1, z_1)\cdots Y_W^L(u_l, z_l)Y_W^{s(R)}(u_{l+1},z_{l+1})\cdots Y_W^{s(R)}(Y_V^s(u_{n-1}, z_{n-1}-z_n)u_{n}, z_{n})w\rangle, 
\end{align*}
is obtained by the expanding some rational function in some certain region, during which one of the steps expands the negative powers of $z_{n-1} = z_n+(z_{n-1}-z_n)$ as a series with positive powers $z_{n-1}-z_n$. So there should be infinitely many negative powers of $z_{n}$. However, since $Y_W^{s(R)}(u, z_n)w$ has no negative powers of $z_n$, in particular, for $u=(Y_V)_k(u_{n-1})u_n$ with any $k \in \Z$. Thus this series has no negative powers of $z_{n}$. So it is impossible for $z_{n-1}=0$ to be a pole. 

Similarly, assume $z_{n-2}=0$ is a pole, we use the associativity again to see that 
\begin{align*}
\langle w', Y_W^L(u_1, z_1)\cdots Y_W^L(u_l, z_l)Y_W^{s(R)}(u_{l+1},z_{l+1})\cdots Y_W^{s(R)}(Y_V^s(u_{n-2}, z_{n-2}-z_{n})Y_V^s(u_{n-1}, z_{n-1}-z_n)u_n, z_n)w\rangle, 
\end{align*}
is obtained by the expanding some rational function in some certain region, during which one of the steps expands the negative powers of $z_{n-2} = z_n+(z_{n-2}-z_n)$ as a series with positive powers $z_{n-2}-z_n$. So there should be infinitely many negative powers of $z_n$, which is not possible. 

Similarly one can argue that $z_{n-3}=0$ is not a pole, ..., $z_{l+1}=0$ is not a pole. To see that $z_l$ is not a pole, we use the commutativity of $Y_W^L$ and $Y_W^{s(R)}$ to move all the $Y_W^L$ to the right. The resulting series
\begin{align*}
\langle w', Y_W^{s(R)}(u_{l+1},z_{l+1})\cdots Y_W^{s(R)}(u_{n}, z_{n})Y_W^L(u_1, z_1)\cdots Y_W^L(u_l, z_l)w\rangle
\end{align*}
converges absolutely when $|z_{l+1}|>\cdots > |z_n|>|z_1|>\cdots >|z_l|>0$ to the same rational function as (\ref{LeftlRightn-l-1}) does. As $Y_W^L(u, z_l)w$ has no negative powers of $z_l$ for every $u$, we then see that $z_l$ is allowed to take zero and thus $z_l=0$ is not a pole. Then we apply associativity of $Y_W^L$ and argue similarly that $z_{l-1}=0$ is not a pole, ..., $z_1$ is not a pole. 
\end{proof}


\begin{nota} We denote the rational function that the series
\begin{align*}
\langle w', Y_W^L(u_1, z_1)\cdots Y_W^L(u_l, z_l)Y_W^{s(R)}(u_{l+1},z_{l+1})\cdots Y_W^{s(R)}(u_{n}, z_{n})w\rangle\end{align*}
converges to by
$$
R\left(\langle w', Y_W^L(u_1, z_1)\cdots Y_W^L(u_l, z_l)Y_W^{s(R)}(u_{l+1},z_{l+1})\cdots Y_W^{s(R)}(u_{n}, z_{n})w\rangle\right).$$
By the previous proposition, it is of the form
$$\frac{h(z_1, ..., z_n)}{\prod\limits_{1\leq i < j \leq n}(z_i -z_j)^{p_{ij}} } $$ 
for some polynomial $h(z_1, ..., z_n)$ and some integers $p_{ij}, 1\leq i < j \leq n$. The polynomial depends on the choice of $w'\in W', w\in W, u_1, ..., u_n\in V$. But since $V$ and $W$ satisfies the pole-order condition, for each $1\leq i < j \leq n$, the integer $p_{ij}$ depends only on $u_i$ and $u_j$. It is important that $R\left(\langle w', Y_W^L(u_1, z_1)\cdots Y_W^L(u_l, z_l)Y_W^{s(R)}(u_{l+1},z_{l+1})\cdots Y_W^{s(R)}(u_{n}, z_{n})w\rangle\right)$ is defined whenever $z_i\neq z_j, 1\leq i < j \leq n$. The inequality $|z_1|>\cdots > |z_n|$ is not necessary.
\end{nota}

\begin{nota}
For every $(z_1, ..., z_{n})\in F_{n}\C$, the linear functional 
$$w'\mapsto R\left(\begin{aligned}\langle w', Y_W^L(u_1, z_1)\cdots Y_W^L(u_l, z_l)Y_W^{s(R)}(u_{l+1},z_{l+1})
\cdots Y_W^{s(R)}(u_{n}, z_{n})w\rangle\end{aligned}\right)$$ 
determines an element in $(W')^*$ that will be denoted by 
$$E\left(\begin{aligned} Y_W^L(u_1, z_1)\cdots Y_W^L(u_l, z_l)Y_W^{s(R)}(u_{l+1},z_{l+1})\cdots Y_W^{s(R)}(u_{n}, z_{n})w\end{aligned}\right). $$
As will be seen soon, this element is indeed in $\overline{W}$. 
It is important that this element of $\overline{W}$ is defined whenever $z_i \neq z_j, 1\leq i < j \leq n$. The inequality $|z_1|>\cdots > |z_n|$ is not necessary. 
\end{nota}

\begin{rema}
The $E$-notation was introduced by Huang in \cite{Hcoh}. Instead of dealing with the series, we are dealing with the holomorphic function obtained by the analytic extension of the sum of the series. With the $E$-notation, the commutativity of $Y_W^L$ and $Y_W^{s(R)}$ can now be expressedas
$$E(Y_W^L(u_1, z_1)Y_W^{s(R)}(u_2, z_2)w) = E(Y_W^{s(R)}(u_2, z_2)Y_W^L(u_1, z_1)w)$$
Notice that the series in the left-hand-side only makes sense in $|z_1|>|z_2|>0$, and the series in the right-hand-side only makes sense in $|z_2|>|z_1|>0$. So we will not be able to find $z_1, z_2\in \C$ such that $Y_W^L(u_1, z_1)Y_W^{s(R)}(u_2, z_2)w$ and $Y_W^{s(R)}(u_2, z_2)Y_W^L(u_1, z_1)w$ are equal as elements in $\overline{W}$. However, as they both converge to a common rational function that determines an element in $\overline{W}$ defined for every $(z_1, z_2)\in F_2\C$. 
\end{rema}


\begin{exam}\label{exam-n-cochain}
Let $V$ be a MOSVA and $W$ be a $V$-bimodule. Assume that both $V$ and $W$ satisfies the pole-order condition. 
Fix $n\in \Z_+$ and $l\in \N$ such that $0\leq l \leq n$. 
For every $w\in W$ such that for every $u\in V$, $Y_W^L(u,x)w\in W[[x]], Y_W^{s(R)}(u, x)w\in W[[x]]$, and for every $u_1, ..., u_n\in V$, the map from $F_n\C$ to $\overline{W}$ defined by
\begin{equation} \label{ELeftlRightn-l}
(z_1, ..., z_{n})\mapsto E\left(\begin{aligned}Y_W^L(u_1, z_1)\cdots Y_W^L(u_l, z_l)Y_W^{s(R)}(u_{l+1},z_{l+1})
\cdots Y_W^{s(R)}(u_{n}, z_{n})w\end{aligned}\right)
\end{equation}
is a $\overline{W}$-valued rational function in $z_1, ..., z_n$ with the only possible poles at $z_i= z_j$, $1\leq i < j \leq n$.
\end{exam}

\begin{proof}
The first condition is seen from the discussions above. The second condition follows from Remark \ref{RemaPoleCond-1}. For homogeneous $u_1, ..., u_n\in V, w\in W$, one can computes that the power series
\begin{align*}
\prod_{1\leq i < j \leq n} (z_i - z_j)^{p_{ij}} Y_W^L(u_1, z_1)\cdots Y_W^L(u_l, z_l)Y_W^{s(R)}(u_{l+1}, z_{l+1})\cdots Y_W^{s(R)}(u_n, z_n)w\end{align*}
satisfies the the conditions in Proposition \ref{nomorearcW-prop} ($C$ can be chosen as $\sum\limits_{i=1}^n \wt u_i + \wt w - \sum\limits_{1\leq i < j \leq n} p_{ij}$.) Thus from Remark \ref{nomorearcW}, we see that the rational function in question takes value in $\overline{W}$. 
\end{proof}

\begin{nota}
We will use the notation 
$$E_W^{(l, n-l)}(u_1\otimes \cdots \otimes u_n; w)$$ 
to denote the rational function (\ref{ELeftlRightn-l}), with $n\in \Z_+, l\in \N$, $u_1, ..., u_n\in V$ and $w\in W$ chosen the same way as in the previous example. In particular, $E_W^{(l, n-l)}(u_1\otimes \cdots \otimes u_n; w)\in \widetilde{W}_{z_1 ... z_n}$. When $W = V$ and $w = \one$, we will use the notation
$E_V^{(n)}(u_1\otimes \cdots \otimes u_n)$ for $E_V^{(n, 0)}(u_1\otimes \cdots \otimes u_n; \one)$ without explicitly mentioning $\one$. 
\end{nota}


\subsection{Series of $\protect\overline{W}$-valued rational functions} In this paper we will be frequently dealing with series of $\overline{W}$-valued rational function. Here we illustrate some examples. Let $(z_1, ..., z_n)\in F_n \C$. Let $u_1, ..., u_n \in V$ and $w\in W$ such that $Y_W^L(u, x)w\in W[[x]]$ and $Y_W^R(w, x)u\in W[[x]]$ for every $u\in V$. Let $v\in V$ and $x$ be a formal variable. Note that the components $(Y_W^L)_n(v)$ of the vertex operator $Y_W^L(v, x)$ are sums of homogeneous linear operators on $W$ that extends naturally to $\overline{W}$. In particular, they acts on 
\begin{align*}
\overline{w} & = (E_W^{(l, n-l)}(u_1 \otimes \cdots \otimes u_n; w))(z_1, ..., z_n)\\ 
& = E(Y_W^L(u_1, z_1)\cdots Y_W^L(u_l, z_l)Y_W^{s(R)}(u_{l+1}, z_{l+1})\cdots Y_W^{s(R)}(u_n, z_n)w)
\end{align*} 
Thus the vertex operator $Y_W^L(u, x)$ acting on $\overline{w}$ is the following \textit{single }series of elements in $\overline{W}$:
\begin{align*}
Y_W^L(v, x) \overline{w} & = \sum_{n\in \Z} (Y_W^L)_n(u) \overline{w} x^{-n-1}\\
&= \sum_{n\in \Z} (Y_W^L)_n(u) E(Y_W^L(u_1, z_1)\cdots Y_W^L(u_l, z_l)Y_W^{s(R)}(u_{l+1}, z_{l+1})\cdots Y_W^{s(R)}(u_n, z_n)w)x^{-n-1}
\end{align*}
If we pair the above with $w'\in W'$, then the coefficient of $x^{-n-1}$ in $\langle w', Y_W^L(v, x)\overline{w}\rangle$ is just
$$\langle w', (Y_W^L)_n(v)E(Y_W^L(u_1, z_1)\cdots Y_W^L(u_l, z_l)Y_W^{s(R)}(u_{l+1}, z_{l+1})\cdots Y_W^{s(R)}(u_n, z_n)w)\rangle$$
which is a rational function in $z_1, ..., z_n$ with the only possible poles at $z_i = z_j, 1\leq i < j \leq n$. Moreover, if $n$ is sufficiently negative, the coefficient is zero. Thus the series $\langle w', Y_W^L(v, x)\overline{w}\rangle$ has at most finitely many positive powers.

\begin{prop}
Let $u_1, ..., u_n\in V, w\in W$ be chosen as above. Then the single series 
$$Y_W^L(v, z)E(Y_W^L(u_1, z_1)\cdots Y_W^L(u_l, z_l)Y_W^{s(R)}(u_{l+1}, z_{l+1})\cdots Y_W^{s(R)}(u_n, z_n)w)$$
converges absolutely when
$$|z|>|z_i|, i = 1, ..., n$$
to the $\overline{W}$-valued rational function
$$E(Y_W^L(v, z)Y_W^L(u_1, z_1)\cdots  Y_W^L(u_l, z_l)Y_W^{s(R)}(u_{l+1}, z_{l+1})\cdots Y_W^{s(R)}(u_n, z_n)w)$$
\end{prop}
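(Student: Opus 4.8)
The plan is to prove the identity first on the nested region $|z|>|z_1|>\cdots>|z_n|>0$, where the single series can be matched against a genuinely iterated product of vertex operators, and then to widen the region of convergence to $|z|>|z_i|$, $i=1,\dots,n$, by comparing the Laurent-in-$z$ coefficients, which are themselves rational functions of $z_1,\dots,z_n$.

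First I would record what the coefficients of the single series are. For fixed $(z_1,\dots,z_n)\in F_n\C$, write $\overline{w}$ for the element
$$E(Y_W^L(u_1, z_1)\cdots Y_W^L(u_l, z_l)Y_W^{s(R)}(u_{l+1}, z_{l+1})\cdots Y_W^{s(R)}(u_n, z_n)w)$$
of $\overline{W}$, which by Example~\ref{exam-n-cochain} is a $\overline{W}$-valued rational function whose numerator coefficients lie in $W$. Since each $(Y_W^L)_k(v)$ is a finite sum of homogeneous operators on $W$ extended to $\overline{W}$, applying it termwise to that numerator shows that
$$c_k(z_1,\dots,z_n):=\langle w', (Y_W^L)_k(v)\,\overline{w}\rangle$$
is, for each $w'\in W'$, a rational function of $z_1,\dots,z_n$ with poles only at $z_i=z_j$, and that $c_k=0$ once $k$ is sufficiently negative, by the lower-bound condition on $W$. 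Thus the single series in question is $\sum_k c_k(z_1,\dots,z_n)\,z^{-k-1}$, a Laurent series in $z$ whose coefficients are rational in $z_1,\dots,z_n$ and are defined on all of $F_n\C$.

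Next I would treat the nested region. Applying the Proposition established above (to the $l+1$ left operators $Y_W^L(v,z),Y_W^L(u_1,z_1),\dots,Y_W^L(u_l,z_l)$ together with the $n-l$ operators $Y_W^{s(R)}$), the fully iterated series
$$\langle w', Y_W^L(v, z)Y_W^L(u_1, z_1)\cdots Y_W^L(u_l, z_l)Y_W^{s(R)}(u_{l+1}, z_{l+1})\cdots Y_W^{s(R)}(u_n, z_n)w\rangle$$
converges absolutely for $|z|>|z_1|>\cdots>|z_n|>0$ to $\langle w',F\rangle$, where $F$ denotes the target $\overline{W}$-valued rational function. In this region $\overline{w}$ is exactly the sum of the absolutely convergent inner series, so grouping the absolutely convergent double sum (over $k$ and over the inner expansion) and summing the inner indices first, which is legitimate by Fubini for absolutely convergent series, identifies the iterated series with $\sum_k c_k\,z^{-k-1}$. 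Hence $\sum_k c_k(z_1,\dots,z_n)\,z^{-k-1}$ converges absolutely to $\langle w',F\rangle$ on the nested region.

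Finally I would widen the region. Because $w$ satisfies the creation-type hypothesis, the same reasoning as in the Proposition above shows that $\langle w',F\rangle$ is rational with poles only at coincidences of $z,z_1,\dots,z_n$; in particular, for fixed distinct $z_1,\dots,z_n$ its only poles in $z$ are at $z=z_i$. Expanding $\langle w',F\rangle$ around $z=\infty$ (expanding each factor $(z-z_i)^{-1}$ as a geometric series in $z_i/z$) yields a Laurent series $\sum_k \tilde c_k(z_1,\dots,z_n)\,z^{-k-1}$ converging absolutely for $|z|>\max_i|z_i|$ with $z_i\neq z_j$, whose coefficients $\tilde c_k$ are again rational in $z_1,\dots,z_n$ with poles only at $z_i=z_j$. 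By uniqueness of the Laurent expansion in $z$, $c_k$ and $\tilde c_k$ agree on the nested region; being rational functions of $z_1,\dots,z_n$ with the same admissible poles that agree on a nonempty open subset of $F_n\C$, they agree throughout $F_n\C$. Therefore $\sum_k c_k\,z^{-k-1}=\sum_k \tilde c_k\,z^{-k-1}$ converges absolutely to $\langle w',F\rangle$ whenever $|z|>|z_i|$ for all $i$, which is the assertion. The step I expect to be the main obstacle is precisely this region-widening: the single series and the iterated series have genuinely different natural domains (the single series requires only $|z|>|z_i|$, with no ordering imposed among the $z_i$), so there is no single common region of convergence to argue from. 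The device that makes it work, as elsewhere in this paper, is to pass to the rational functions the series converge to and to compare the $z$-Laurent coefficients as rational functions of $z_1,\dots,z_n$, using that two such rational functions agreeing on an open subset of $F_n\C$ must coincide.
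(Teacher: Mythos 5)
Your proof is correct and takes essentially the same route as the paper's: match the $z$-Laurent coefficients of the single series with those of the fully iterated series on the nested region $|z|>|z_1|>\cdots>|z_n|>0$, then use the upper truncation in $z$ to recognize the single series as the expansion of the rational function around $z=\infty$, hence convergent whenever $|z|>|z_i|$, $i=1,\dots,n$. Your region-widening step (uniqueness of Laurent expansions plus the identity theorem for rational functions on $F_n\C$) is simply a more detailed rendering of the paper's one-sentence justification of that same point.
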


\begin{proof}
For every $w'\in W'$, we know that the series
$$\langle w', Y_W^L(v, z)Y_W^L(u_1, z_1)\cdots Y_W^L(u_l, z_l)Y_W^{s(R)}(u_{l+1}, z_{l+1})\cdots Y_W^{s(R)}(u_n, z_n)w)\rangle$$
converges absolutely when $|z|>|z_1|>\cdots > |z_n|$ to the rational function with the only possible poles at $z=z_i, i= 1,..., n, z_i = z_j, 1\leq i < j \leq n$. For each fixed $n\in \Z$, the coefficient of $z^{-n-1}$ is precisely the sum of the series
$$\langle w', (Y_W^L)_n(v) Y_W^L(u_1, z_1)\cdots Y_W^L(u_l, z_l)Y_W^{s(R)}(u_{l+1}, z_{l+1})\cdots Y_W^{s(R)}(u_n, z_n)w)\rangle$$
in $z_1, ..., z_n$, which is the same as the coefficient of $Y_W^L(v, z)\overline{w}$. From the upper truncation of $z$, we know that the series is obtained by expanding the negative powers of $z-z_i$ as a power series in $z_i$. Thus it converges absolutely whenever $|z|>|z_i|, i = 1, ..., n$. 
\end{proof}

\begin{prop}\label{ComposableExample-1}
Let $u_1, ..., u_n\in V, w\in W$ be chosen as above. Let $m\in \Z_+, v_1, ..., v_m\in V$. Then for each $p= 0, ..., m$, the series 
\begin{align*}
& Y_W^L(v_1, z_1)\cdots Y_W^L(v_p, z_p)Y_W^{s(R)}(v_{p+1}, z_{p+1})\cdots Y_W^{s(R)}(v_m, z_m)\cdot  \\
& \qquad E(Y_W^L(u_1, z_{m+1})\cdots Y_W^L(u_l, z_{m+l})Y_W^{s(R)}(u_{l+1}, z_{m+l+1})\cdots Y_W^{s(R)}(u_n, z_{m+n})w)
\end{align*}
converges absolutely when
$$|z_1|>\cdots > |z_m| > |z_i|, i = m+1, ..., m+n.$$
to the $\overline{W}$-valued rational function
\begin{align*}
& E(Y_W^L(v_1, z_1)\cdots Y_W^L(v_p, z_p)Y_W^{s(R)}(v_{p+1}, z_{p+1})\cdots Y_W^{s(R)}(v_m, z_m)\cdot  \\
& \qquad Y_W^L(u_1, z_{m+1})\cdots Y_W^L(u_l, z_{m+l})Y_W^{s(R)}(u_{l+1}, z_{m+l+1})\cdots Y_W^{s(R)}(u_n, z_{m+n})w)
\end{align*}
\end{prop}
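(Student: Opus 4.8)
The plan is to prove this by induction on $m$, using the preceding proposition (the single-operator case for $Y_W^L$) together with its $Y_W^{s(R)}$-analogue as the two building blocks, and using the commutativity of $Y_W^L$ and $Y_W^{s(R)}$ (part (2) of the theorem reformulating compatibility in terms of $Y_W^L$ and $Y_W^{s(R)}$) to keep every intermediate expression in the standard ``left operators followed by right operators'' form.

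First I would record the $Y_W^{s(R)}$-analogue of the preceding proposition: for any $\overline{W}$-valued rational function $\overline{w}$ arising as in Example \ref{exam-n-cochain}, the single series $Y_W^{s(R)}(v, z)\overline{w}$ converges absolutely when $|z| > |z_i|$ for all variables $z_i$ occurring in $\overline{w}$, to the $\overline{W}$-valued rational function obtained by prepending $Y_W^{s(R)}(v, z)$. Its proof is word-for-word the same as for the $Y_W^L$ case: the multivariable series $\langle w', Y_W^{s(R)}(v, z)\cdots\rangle$ converges in the nested region to a rational function (by part (1) of the compatibility theorem, with the poles at the coordinate hyperplanes $z_i=0$ excluded via Remark \ref{RemaPoleCond-1} and the truncation hypothesis on $w$), each coefficient in $z$ is the corresponding mode of $Y_W^{s(R)}(v,z)$ acting on $\overline{w}$, and upper-truncation in $z$ forces the series to be the one obtained by expanding $(z-z_i)^{-1}$ as a power series in $z_i$, whence absolute convergence for $|z| > |z_i|$.

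Then I would run the induction. The base case $m=0$ is the definition of $\overline{w}$, and $m=1$ is exactly the preceding proposition (if $p=1$) or its analogue (if $p=0$). For the inductive step I would apply the inductive hypothesis to $v_2, \ldots, v_m$ to conclude that
$$\overline{w}' := Y_W^L(v_2, z_2)\cdots Y_W^{s(R)}(v_m, z_m)\,\overline{w}$$
converges, when $|z_2|>\cdots>|z_m|>|z_i|$ for $i=m+1, \ldots, m+n$, to a $\overline{W}$-valued rational function which, after reordering the factors into standard form by repeatedly applying the two-operator commutativity to adjacent pairs, is an $E$-expression of exactly the type allowed in the building-block propositions: the same $w$ sits at the far right, so its truncation hypothesis is preserved. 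Applying the appropriate building block to the outermost operator $v_1$ — the preceding proposition when $v_1$ contributes a $Y_W^L$, its analogue when it contributes a $Y_W^{s(R)}$ — gives absolute convergence for $|z_1|>|z_j|$ for every remaining variable $z_j$, and identifies the limit with the $E$-expression carrying $v_1$ prepended. Intersecting the two regions yields precisely $|z_1|>\cdots>|z_m|>|z_i|$, and commutativity identifies the resulting rational function with the one claimed.

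The main obstacle, and the point needing the most care, is not the reordering bookkeeping but the passage from iterated convergence to the absolute convergence of the full multiple series on the stated region. This is handled exactly as in the preceding proposition: each building block does not merely compose two convergent series but re-expands a single global rational function — the one furnished by the compatibility theorem together with the pole-order bounds of Remark \ref{RemaPoleCond-1} — in the new region, and it is precisely because that rational function is defined on all of $\{z_i \neq z_j\}$ (no spurious poles at $z_i=0$) that the re-expansion converges absolutely on the whole region rather than only iteratively. I would therefore verify at each inductive step that the intermediate $E$-expression is genuinely an element of $\widetilde{W}_{z_2, \ldots, z_{m+n}}$ (via Example \ref{exam-n-cochain} and Remark \ref{nomorearcW}), so that the hypotheses of the building-block propositions are met verbatim.
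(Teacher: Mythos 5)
Your proof is correct in substance, but it follows a genuinely different route from the paper's, and the detour ultimately collapses onto the paper's argument. The paper proves this proposition in one step: pairing the series with an arbitrary $w'\in W'$, it observes that the resulting multiple series coincides, term by term, with the expansion of the single global rational function $R(\langle w', Y_W^L(v_1,z_1)\cdots Y_W^{s(R)}(v_m,z_m)Y_W^L(u_1,z_{m+1})\cdots Y_W^{s(R)}(u_n,z_{m+n})w\rangle)$ in the region $|z_1|>\cdots>|z_m|>|z_i|$, $i=m+1,\dots,m+n$ --- namely, expand $(z_a-z_b)^{-1}$ for $a<b\le m$ in powers of $z_b$ and $(z_a-z_i)^{-1}$ for $a\le m<i$ in powers of $z_i$, leaving the variables $z_{m+1},\dots,z_{m+n}$ untouched --- so absolute convergence of the joint series and the identification of its sum with the claimed $E$-expression are simultaneous and immediate. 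Your induction on $m$ identifies the limit correctly, but, as you yourself flag, it delivers only iterated convergence (inner sum over the modes of $v_2,\dots,v_m$, then an outer sum over the modes of $v_1$), and iterated convergence does not give absolute convergence of the joint multiple series; the patch you propose for this gap is exactly the paper's one-step expansion argument, and once that argument is in hand the induction, the $Y_W^{s(R)}$-analogue lemma, and the commutativity reordering are all redundant scaffolding. Two smaller cautions: the reordering of your intermediate mixed-order $E$-expression into the standard form required by the building-block propositions needs commutativity at the level of $\overline{W}$-valued rational functions, i.e.\ Theorem \ref{ConvRef} part (1), which the paper establishes only in the following subsection (a forward reference, though not a circular one); and both your argument and the paper's presuppose that the mixed-order product in the target $E$-expression converges to a rational function with poles only at $z_a=z_b$, which is precisely where the compatibility theorem of Section 2, commutativity, the truncation hypothesis on $w$, and the pole-order bounds of Remark \ref{RemaPoleCond-1} are consumed. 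What your route buys is an explicit reduction to the single-operator propositions; what the paper's route buys is that the joint-versus-iterated convergence subtlety never arises.
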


\begin{proof}
It suffices to notice that for each $w'\in W'$, the series
\begin{align*}
&\langle w', Y_W^L(v_1, z_1)\cdots Y_W^L(v_p, z_p)Y_W^{s(R)}(v_{p+1}, z_{p+1})\cdots Y_W^{s(R)}(v_m, z_m)\cdot  \\
& \qquad E(Y_W^L(u_1, z_{m+1})\cdots Y_W^L(u_l, z_{m+l})Y_W^{s(R)}(u_{l+1}, z_{m+l+1})\cdots Y_W^{s(R)}(u_n, z_{m+n})w)\rangle
\end{align*}
coincides with the expansion of the rational function 
\begin{align*}
& R(\langle w', Y_W^L(v_1, z_1)\cdots Y_W^L(v_p, z_p)Y_W^{s(R)}(v_{p+1}, z_{p+1})\cdots Y_W^{s(R)}(v_m, z_m)\cdot  \\
& \qquad Y_W^L(u_1, z_{m+1})\cdots Y_W^L(u_l, z_{m+l})Y_W^{s(R)}(u_{l+1}, z_{m+l+1})\cdots Y_W^{s(R)}(u_n, z_{m+n})w)\rangle
\end{align*}
in the region $\{(z_1, ..., z_{m+n}): |z_1|>\cdots > |z_m|> |z_i|, i = m+1, ..., m+n\}$. 
\end{proof}

\begin{rema}
In terms of the $E$-notation, we have
\begin{align*}
& E(Y_W^L(v, z)E(Y_W^L(u_1, z_1)\cdots Y_W^L(u_l, z_l)Y_W^{s(R)}(u_{l+1}, z_{l+1})\cdots Y_W^{s(R)}(u_n, z_n)w))\\
& \qquad = E(Y_W^L(v, z)Y_W^L(u_1, z_1)\cdots Y_W^L(u_l, z_l)Y_W^{s(R)}(u_{l+1}, z_{l+1})\cdots Y_W^{s(R)}(u_n, z_n)w))
\end{align*}
and
\begin{align*}
& E(Y_W^L(v_1, z_1)\cdots Y_W^L(v_p, z_p)Y_W^{s(R)}(v_{p+1}, z_{p+1})\cdots Y_W^{s(R)}(v_m, z_m)\cdot  \\
& \qquad E(Y_W^L(u_1, z_{m+1})\cdots Y_W^L(u_l, z_{m+l})Y_W^{s(R)}(u_{l+1}, z_{m+l+1})\cdots Y_W^{s(R)}(u_n, z_{m+n})w))\\
= & E(Y_W^L(v_1, z_1)\cdots Y_W^L(v_p, z_p)Y_W^{s(R)}(v_{p+1}, z_{p+1})\cdots Y_W^{s(R)}(v_m, z_m)\cdot  \\
& \qquad Y_W^L(u_1, z_{m+1})\cdots Y_W^L(u_l, z_{m+l})Y_W^{s(R)}(u_{l+1}, z_{m+l+1})\cdots Y_W^{s(R)}(u_n, z_{m+n})w)
\end{align*}
\end{rema}

Here is another type of series of $\overline{W}$-valued rational functions that will be considered. Let $u_1, ..., u_{n+1}\in V, w\in W$ such that $Y_W^L(u, x)w\in W[[x]]$ and $Y_W^R(w, x)u\in W[[x]]$. Let $(\zeta, z_3, ..., z_n)\in F_n \C$. 
$$E_W^{(l, n-l)}(Y_V(u_1, z_1-\zeta) Y_V(u_2, z_2-\zeta)\one \otimes u_3 \otimes \cdots \otimes u_{n+1}; w)(\zeta, z_3, ..., z_{n+1})$$
which expands as 
$$\sum_{k_1, k_2\in \Z} E_W^{(l, n-l)}((Y_V)_{k_1}(u_1)(Y_V)_{k_2}(u_2)\one \otimes u_3 \otimes \cdots \otimes u_{n+1}; w)(\zeta, z_3, ..., z_{n+1}) (z_1-\zeta)^{-k_1-1}(z_2-\zeta)^{-k_2-1}$$
For each $k_1, k_2\in \Z$, the coefficients of $(z_1-\zeta)^{-k_1-1}(z_2-\zeta)^{-k_2-1}$ is a $\overline{W}$-valued rational function in $\zeta, z_3, ..., z_{n+1}$. 

\begin{prop}
Let $u_1, ..., u_{n+1}\in V$ and $w\in W$ be chosen as above. Then the series 
\begin{equation}\label{ComposableExample-2}
E_W^{(l, n-l)}(Y_V(u_1, z_1-\zeta) Y_V(u_2, z_2-\zeta)\one \otimes u_3 \otimes \cdots \otimes u_{n+1}; w)(\zeta, z_3, ..., z_{n+1})
\end{equation}
converges absolutely when 
\begin{align*}
|z_3-\zeta|>|z_1-\zeta|>|z_2-\zeta|
\end{align*}
to the $\overline{W}$-valued rational function 
$$E(Y_W^L(u_1, z_1) \cdots Y_W^L(u_{l+1}, z_{l+1}) Y_W^{s(R)}(u_{l+2}, z_{l+2})\cdots Y_W^{s(R)}(u_{n+1}, z_{n+1})w)$$
\end{prop}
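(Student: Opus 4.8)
The plan is to pair the asserted identity with an arbitrary $w'\in W'$ and to recognize the resulting iterated series in the two variables $z_1-\zeta$ and $z_2-\zeta$ as the expansion, valid in the region $|z_3-\zeta|>|z_1-\zeta|>|z_2-\zeta|$, of the single scalar rational function
$$G=R\left(\langle w', Y_W^L(u_1, z_1)Y_W^L(u_2, z_2)Y_W^L(u_3, z_3)\cdots Y_W^L(u_{l+1}, z_{l+1})Y_W^{s(R)}(u_{l+2}, z_{l+2})\cdots Y_W^{s(R)}(u_{n+1}, z_{n+1})w\rangle\right).$$
By the earlier propositions $G$ is a rational function of $z_1,\dots,z_{n+1}$ whose poles lie only along $z_i=z_j$, and it does not involve $\zeta$; it is exactly the scalar function determined by the target $\overline{W}$-valued rational function. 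Thus the whole statement reduces to showing that, after pairing with $w'$, the series (\ref{ComposableExample-2}) converges absolutely in the stated region and sums to $G$, the essential point being that the auxiliary variable $\zeta$ must drop out.

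The conceptual core is the fusion of the first two left operators into the single slot at $\zeta$. First I would record three standard consequences of the axioms. The creation property together with the $D$-derivative property of $V$ gives $Y_V(u_2, z_2-\zeta)\one=e^{(z_2-\zeta)D_V}u_2$, and the $D$-commutator formula of $V$ gives the conjugation $Y_V(u_1, z_1-\zeta)e^{(z_2-\zeta)D_V}=e^{(z_2-\zeta)D_V}Y_V(u_1, z_1-z_2)$, so that
$$Y_V(u_1, z_1-\zeta)Y_V(u_2, z_2-\zeta)\one=e^{(z_2-\zeta)D_V}Y_V(u_1, z_1-z_2)u_2.$$
Feeding this into the first slot and using the $D$-derivative property of the left module in the Taylor form $Y_W^L(e^{yD_V}v, x)=Y_W^L(v, x+y)$, the operator at $\zeta$ becomes $Y_W^L(Y_V(u_1, z_1-z_2)u_2, z_2)$: the expansion point has shifted from $\zeta$ to $z_2$ and $\zeta$ has disappeared. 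Finally the associativity of $Y_W^L$ established in Section 3 (in the $E$-notation) converts $E(Y_W^L(Y_V(u_1, z_1-z_2)u_2, z_2)\cdots)$ into $E(Y_W^L(u_1, z_1)Y_W^L(u_2, z_2)\cdots)$, whose value is $G$. This chain of identities explains why the answer is $\zeta$-independent.

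To turn this into a proof I would first run the above identities in a nonempty common region of absolute convergence, for instance with $\zeta$ of large modulus, $z_1,z_2$ close to $\zeta$ (so that both $|z_1-\zeta|>|z_2-\zeta|>0$ and $|z_1|>|z_2|>|z_1-z_2|>0$ hold) and $z_3,\dots,z_{n+1}$ of small, strictly decreasing moduli (so that $|\zeta|>|z_3|>\cdots>|z_{n+1}|$ and $|z_3-\zeta|>|z_1-\zeta|$ hold). Such a region is nonempty and is contained in $|z_3-\zeta|>|z_1-\zeta|>|z_2-\zeta|$, and there the equality of the iterated series with the expansion of $G$ follows from the previous paragraph together with the rationality axiom of $V$. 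I would then propagate the identity to the whole region $|z_3-\zeta|>|z_1-\zeta|>|z_2-\zeta|$ by analytic continuation, exactly as in the proof of Proposition \ref{ComposableExample-1}: each coefficient of $(z_1-\zeta)^{-k_1-1}(z_2-\zeta)^{-k_2-1}$ is a $\overline{W}$-valued rational function in $\zeta, z_3,\dots,z_{n+1}$ by Example \ref{exam-n-cochain}, the orders of all its poles are uniformly bounded by Remark \ref{RemaPoleCond-1}, and the passage between regions of expansion is governed by Lemmas 4.5 and 4.7 of \cite{Q-Rep}.

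The step I expect to be the main obstacle is this last one: establishing absolute convergence of the double series in $(z_1-\zeta)$ and $(z_2-\zeta)$ throughout the full region $|z_3-\zeta|>|z_1-\zeta|>|z_2-\zeta|$, rather than only in the small common region, and checking that its sum really is the expansion of $G$ there. In particular one must verify that no pole at $z_i=0$ is reintroduced by the fusion and that the negative powers of $z_1-\zeta$ produced by the pole $z_1=z_2$ of $G$ are matched correctly, both of which rest on the pole-order condition via Remark \ref{RemaPoleCond-1} and on the analytic-continuation lemmas of \cite{Q-Rep}. By contrast the algebraic identities of the second paragraph are immediate; the genuine content lies in the control of the convergence domains and the pole orders.
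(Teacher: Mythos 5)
Your algebraic core is sound, and your fusion route is a legitimate alternative to the paper's: you derive $Y_V(u_1,z_1-\zeta)Y_V(u_2,z_2-\zeta)\one=e^{(z_2-\zeta)D_V}Y_V(u_1,z_1-z_2)u_2$ from the creation and $D$-commutator properties, shift the expansion point via $Y_W^L(e^{yD_V}v,x)=Y_W^L(v,x+y)$, and finish with module associativity, whereas the paper never isolates this fusion identity and instead passes directly, by associativity together with Lemma 4.5 of \cite{Q-Rep}, between the product series $\langle w', Y_W^L(u_1,z_1)Y_W^L(u_2,z_2)\cdots w\rangle$ and the iterate series $\langle w', Y_W^L(Y_V(u_1,z_1-\zeta)Y_V(u_2,z_2-\zeta)\one,\zeta)\cdots w\rangle$. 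Both routes explain why $\zeta$ drops out of the sum, and your small common region is indeed nonempty.

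The genuine gap is your final step, which is exactly the content of the proposition. You propose to verify the identity in a small region and then ``propagate \dots by analytic continuation'' to all of $|z_3-\zeta|>|z_1-\zeta|>|z_2-\zeta|$; but analytic continuation extends the sum as a holomorphic function, it cannot enlarge the domain of absolute convergence of a series. Knowing that the double series in $(z_1-\zeta)$, $(z_2-\zeta)$ converges in a small region and agrees with $G$ there says nothing about its convergence at points of the larger region. The paper closes this with a different mechanism, precisely the one you label ``the main obstacle'' and leave unsupplied: (i) by associativity and Lemma 4.5 of \cite{Q-Rep}, the full multi-series in \emph{all} the variables $z_1-\zeta, z_2-\zeta, \zeta, z_3,\ldots,z_{n+1}$ is an explicit absolutely convergent expansion of $G$, obtained by a concrete list of elementary expansion operations; (ii) the coefficients of the series (\ref{ComposableExample-2}), paired with $w'$, are precisely the partial sums of that multi-series over the indices $k_3,\ldots,k_{n+2}$; hence (iii) the double series is itself the coarser expansion of $G$ in which only the negative powers of $z_1-z_2$ are expanded in powers of $z_2-\zeta$ and the negative powers of $z_s-z_j$ ($s=1,2$, $j\geq 3$) in powers of $z_s-\zeta$, and such an expansion of a rational function converges absolutely exactly in the stated region. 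It is this ``partial sums of a known expansion'' observation, not continuation, that yields absolute convergence on the whole region; without it your argument only establishes the identity where all intermediate series happen to converge simultaneously.
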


\begin{proof}
For every $w'\in W'$, we know that the series 
$$\langle w', Y_W^L(u_1, z_1) \cdots Y_W^L(u_{l+1}, z_{l+1}) Y_W^{s(R)}(u_{l+2}, z_{l+2})\cdots Y_W^{s(R)}(u_{n+1}, z_{n+1})w\rangle$$
converges absolutely when $|z_1|>\cdots > |z_{n+1}|$ to a rational function with the only possible poles at $z_i = z_j, 1\leq i < j \leq n+1$. By associativity and Lemma 4.5 in \cite{Q-Rep}, we know that the series
$$\langle w', Y_W^L(Y_V(u_1, z_1-\zeta)Y_V(u_2, z_2-\zeta)\one, \zeta)Y_W^L(u_3, z_3)\cdots Y_W^L(u_{l+1}, z_{l+1})Y_W^{s(R)}(u_{l+2}, z_{l+2})\cdots Y_W^{s(R)}(u_{n+1}, z_{n+1})w\rangle$$
with variables $z_1-\zeta, z_2-\zeta, \zeta, z_3, ..., z_n$ that expands as
\begin{align*}
\sum_{k_1, ..., k_{n+2}\in \Z} \langle w',& (Y_W^L)_{k_3}((Y_V)_{k_1}(u_1)(Y_V)_{k_2}(u_2)\one)(Y_W^L)_{k_4}(u_3)\cdots (Y_W^L)_{k_{l+2}}(u_{l+1}) \\
& \cdot (Y_W^{s(R)})_{k_{l+3}}(u_{l+2})\cdots (Y_W^{s(R)})_{k_{n+2}}(u_{n+1})w\rangle (z_1-\zeta)^{-k_1-1} (z_2-\zeta)^{-k_2-1} \zeta^{-k_3-1} z_3^{-k_4-1}\cdots z_{n+1}^{-k_{n+2}-1}
\end{align*}
is obtained from the following expansion of the rational function:
\begin{enumerate}
\item Expand the negative powers of $z_1-z_2 = z_1-\zeta - (z_2-\zeta)$ as a power series of $z_2-\zeta$. 
\item For $s=1, 2$ and $j = 3, ..., n$, expand the negative powers of $z_s-z_j = \zeta + (z_s - \zeta + z_j)$ as a power series of $z_s - \zeta + z_j$, then further expand the positive powers of $z_s-\zeta+z_j$ as polynomials of $z_s-\zeta$ and $z_j$. Note that this expansion is the same as first expand the negative powers of $z_s-z_j=-(z_j - \zeta) + (z_s-\zeta)$ as power series of $(z_s-\zeta)$, then further expand all the negative powers of $z_j - \zeta$ as power series of $z_j$. 
\item For $3\leq i < j \leq n$, expand the negative powers of $z_i - z_j$ as power series of $z_j$. 
\end{enumerate}
Thus the series converges absolutely when
\begin{align*}
& |z_1-\zeta|>|z_2-\zeta|, |\zeta|>|z_3|> \cdots > |z_n|, \\
& |z_j - \zeta|>|z_1-\zeta|, j = 3, ..., n.  
\end{align*}
The result then follow by noticing that the coefficients of the series (\ref{ComposableExample-2}), paired with $w'$, are precisely the partial sums of the above series with respect to $k_3, ..., k_{n+2}$. In particular, the series (\ref{ComposableExample-2}) is obtained from the following expansions of the rational function 
$$R(\langle w', Y_W^L(u_1, z_1) \cdots Y_W^L(u_{l+1}, z_{l+1}) Y_W^{s(R)}(u_{l+2}, z_{l+2})\cdots Y_W^{s(R)}(u_{n+1}, z_{n+1})w\rangle)$$
\begin{enumerate}
\item Expand the negative powers of $z_1-z_2 = z_1-\zeta - (z_2-\zeta)$ as a power series of $z_2-\zeta$. 
\item For $s=1, 2$ and $j = 3, ..., n$, expand the negative powers of $z_s-z_j=-(z_j - \zeta) + (z_s-\zeta)$ as power series of $(z_s-\zeta)$.
\end{enumerate}
Thus the series (\ref{ComposableExample-2}) converges absolutely when $|z_3-\zeta|>|z_1-\zeta|>|z_2-\zeta|.$
\end{proof}

\begin{prop}
Let $m, n \in \Z_+$. Let $\alpha_1, ..., \alpha_n$ be chosen such that $\alpha_1+\cdots + \alpha_n = m+n$. Then the series
$$E_W^{(l, n-l)}(Y_V(u_1^{(1)}, z_1^{(1)}-\zeta_1)\cdots Y_V(u_{\alpha_1}^{(1)}, z_{\alpha_1}^{(1)}-\zeta_1)\one \otimes \cdots \otimes Y_V(u_1^{(n)}, z_1^{(n)}-\zeta_n)\cdots Y_V(u_{\alpha_n}^{(n)}, z_{\alpha_n}^{(n)}-\zeta_n)\one)(\zeta_1, ..., \zeta_n)$$
converges absolutely when
\begin{align*}
& |\zeta_i - \zeta_j| > |z_s^{(i)}-\zeta_i| + |z_t^{(j)}-\zeta_j|, 1\leq i < j \leq n, s = 1, ..., \alpha_i, t = 1, ..., \alpha_j\\
& |z_s^{(i)} - \zeta_i| > |z_t^{(i)}- \zeta_i|, i = 1,..., n, 1\leq s < t \leq \alpha_i. 
\end{align*}
to the $\overline{W}$-valued rational function 
\begin{align*}
& E(Y_W^L(u_1^{(1)}, z_1^{(1)})\cdots Y_W^L(u_{\alpha_1}^{(1)}, z_{\alpha_1}^{(1)}) \cdots Y_W^L(u_1^{(l)}, z_1^{(l)})\cdots Y_W^L(u_{\alpha_l}^{(l)}, z_{\alpha_l}^{(l)})\\
& \qquad \cdot Y_W^{s(R)}(u_1^{(l+1)}, z_1^{(l+1)})\cdots Y_W^{s(R)}(u_{\alpha_{l+1}}^{(l+1)}, z_{\alpha_{l+1}}^{(l+1)}) \cdots Y_W^{s(R)}(u_1^{(n)}, z_1^{(n)})\cdots Y_W^{s(R)}(u_{\alpha_n}^{(n)}, z_{\alpha_n}^{(n)})w)
\end{align*}
\end{prop}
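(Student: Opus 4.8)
The plan is to deduce the statement from the corresponding ``fully expanded'' product, exactly as in Proposition \ref{ComposableExample-1} and the preceding proposition, by exhibiting the left-hand series as one particular iterated expansion of a single scalar rational function. Throughout I pair with an arbitrary $w'\in W'$, so that the target $\overline{W}$-valued function becomes the scalar rational function
\[
R\big(\langle w', Y_W^L(u_1^{(1)}, z_1^{(1)})\cdots Y_W^L(u_{\alpha_l}^{(l)}, z_{\alpha_l}^{(l)}) Y_W^{s(R)}(u_1^{(l+1)}, z_1^{(l+1)})\cdots Y_W^{s(R)}(u_{\alpha_n}^{(n)}, z_{\alpha_n}^{(n)})w\rangle\big),
\]
which, by the convergence result established earlier in this section, is the sum of the fully expanded product series in the region where the moduli of all the $z_s^{(i)}$ are strictly decreasing in the order in which the operators are written, and whose only poles lie on the diagonals $z_s^{(i)}=z_t^{(j)}$. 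This function is the common object all of whose expansions I shall compare.

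Next I would collapse the product block by block. The operators are already grouped by slot in the fully expanded product, so for each $i$ I apply iterated associativity together with the creation property to replace the consecutive block belonging to slot $i$ by a single insertion, at the auxiliary point $\zeta_i$, of the composite vector constituting the $i$-th tensor factor of the argument of $E_W^{(l, n-l)}$ --- using associativity of $Y_W^L$ for the blocks $i\le l$ and of $Y_W^{s(R)}$ (as a left $V^{op}$-module) for the blocks $i>l$. Expanding each such composite as a series in the variables $z_s^{(i)}-\zeta_i$ and then applying $E$ reproduces, coefficient by coefficient after pairing with $w'$, exactly the left-hand series of the proposition.

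It remains to identify the region of absolute convergence, which is the heart of the matter. The passage from the fully expanded product to the composite-insertion series is a finite chain of re-expansions of the rational function above, each governed by Lemma 4.5 and Lemma 4.7 of \cite{Q-Rep}, and the stated inequalities record precisely the regions in which these re-expansions converge. Collapsing a single block expands the within-slot differences $z_s^{(i)}-z_t^{(i)}$ with $s<t$ as power series in $z_t^{(i)}-\zeta_i$, which converges exactly when $|z_s^{(i)}-\zeta_i|>|z_t^{(i)}-\zeta_i|$, giving the second family of inequalities. The interaction between two distinct blocks requires writing each cross-block difference as $z_s^{(i)}-z_t^{(j)}=(\zeta_i-\zeta_j)+(z_s^{(i)}-\zeta_i)-(z_t^{(j)}-\zeta_j)$ and expanding its negative powers as a double power series in the small quantities $z_s^{(i)}-\zeta_i$ and $z_t^{(j)}-\zeta_j$; this converges exactly when $|\zeta_i-\zeta_j|>|z_s^{(i)}-\zeta_i|+|z_t^{(j)}-\zeta_j|$, giving the first family. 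Intersecting these conditions over all blocks and all pairs yields precisely the stated region.

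The main obstacle I anticipate is the bookkeeping of this multi-step expansion. One must check that the several nested re-expansions --- the cross-block expansions producing the separation conditions and the within-block expansions producing the ordering conditions --- can be performed together inside one common nonempty subregion, that each step falls under the hypotheses of Lemmas 4.5 and 4.7 of \cite{Q-Rep} so that both absolute convergence and the limiting function are preserved, and that after all expansions every coefficient still lies in $W$, so that the sum is genuinely $\overline{W}$-valued; this last point is supplied by the pole-order condition through Remark \ref{RemaPoleCond-1} and Proposition \ref{nomorearcW-prop}. This is a direct, if lengthy, generalization of the two-operator single-slot computation in the proof of the preceding proposition, and it may equally be organized as an induction on $n$, peeling off one slot at a time and recombining by Proposition \ref{ComposableExample-1}, with the single-slot case ($n=1$) handled by the iterated associativity described above.
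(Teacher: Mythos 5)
Your proposal is correct and follows essentially the same route as the paper: the paper's own (one-sentence) proof likewise reduces the statement, after pairing with an arbitrary $w'\in W'$, to recognizing the series as a particular re-expansion of the common rational function $R(\langle w', Y_W^L(u_1^{(1)}, z_1^{(1)})\cdots Y_W^{s(R)}(u_{\alpha_n}^{(n)}, z_{\alpha_n}^{(n)})w\rangle)$ in the stated region, exactly as spelled out in the proofs of the neighboring propositions. Your identification of the within-block expansions (giving $|z_s^{(i)}-\zeta_i|>|z_t^{(i)}-\zeta_i|$) and the cross-block expansions (giving $|\zeta_i-\zeta_j|>|z_s^{(i)}-\zeta_i|+|z_t^{(j)}-\zeta_j|$), justified via associativity and Lemmas 4.5 and 4.7 of \cite{Q-Rep}, is precisely the content the paper's ``argue similarly'' refers to.
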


\begin{proof}
It suffices argue similarly that the series, paired with any $w'\in W'$, is obtained from the expansion of the corresponding rational function in the region. 
\end{proof}

We end this section by a proposition dealing with the mixture of the above two types of series of $\overline{W}$-valued rational functions

\begin{prop}\label{ComposableExample}
Fix $m, n\in \Z_+$. Let $\alpha_{0}, \alpha_1, \cdots, \alpha_n$ be chosen such that $\alpha_0 + \alpha_1 + \cdots + \alpha_n = m+n$. Then for every $l_0=0, ..., \alpha_0$, the the series 
\begin{align*}
& Y_W^L(u_1^{(0)}, z_1^{(0)}) \cdots Y_W^L(u_{l_0}^{(0)}, z_{l_0}^{(0)}) Y_W^{s(R)}(u_{l_0+1}^{(0)}, z_{l_0+1}^{(0)}) \cdots Y_W^{s(R)}(u_{\alpha_0}^{(0)}, z_{\alpha_0}^{(0)}) \\
& \cdot E_W^{(l, n-l)}(Y_V(u_1^{(1)}, z_1^{(1)}-\zeta_1)\cdots Y_V(u_{\alpha_1}^{(1)}, z_{\alpha_1}^{(1)}-\zeta_1)\one \otimes \cdots \otimes Y_V(u_1^{(n)}, z_1^{(n)}-\zeta_n)\cdots Y_V(u_{\alpha_n}^{(n)}, z_{\alpha_n}^{(n)}-\zeta_n)\one)(\zeta_1, ..., \zeta_n)
\end{align*}
converges absolutely when
\begin{align*}
& |z_1^{(0)}|> \cdots > |z_{\alpha_0}^{(0)}| > |\zeta_i|+|z_t^{(i)}-\zeta_i|, i = 1, ..., n, t = 1, ..., \alpha_i\\
& |z_1^{(i)}-\zeta_i|> \cdots > |z_{\alpha_i}^{(i)}-\zeta_i|, i = 1, ..., n\\
& |\zeta_i - \zeta_j|> |z_s^{(i)} - \zeta_i| + |z_t^{(j)} -\zeta_j|, 1\leq i < j \leq n, s = 1, ..., \alpha_i, t = 1, ..., \alpha_j. 
\end{align*}
to the $\overline{W}$-valued rational function
\begin{align*}
& E(Y_W^L(u_1^{(0)}, z_1^{(0)}) \cdots Y_W^L(u_{l_0}^{(0)}, z_{l_0}^{(0)}) Y_W^{s(R)}(u_{l_0+1}^{(0)}, z_{l_0+1}^{(0)}) \cdots Y_W^{s(R)}(u_{\alpha_0}^{(0)}, z_{\alpha_0}^{(0)})  \\
& \qquad \cdot Y_W^L(u_1^{(1)}, z_1^{(1)})\cdots Y_W^L(u_{\alpha_1}^{(1)}, z_{\alpha_1}^{(1)}) \cdots Y_W^L(u_1^{(l)}, z_1^{(l)})\cdots Y_W^L(u_{\alpha_l}^{(l)}, z_{\alpha_l}^{(l)})\\
& \qquad \cdot Y_W^{s(R)}(u_1^{(l+1)}, z_1^{(l+1)})\cdots Y_W^{s(R)}(u_{\alpha_{l+1}}^{(l+1)}, z_{\alpha_{l+1}}^{(l+1)}) \cdots Y_W^{s(R)}(u_1^{(n)}, z_1^{(n)})\cdots Y_W^{s(R)}(u_{\alpha_n}^{(n)}, z_{\alpha_n}^{(n)})w)
\end{align*}
\end{prop}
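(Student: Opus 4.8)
The plan is to follow the template of the two preceding propositions: for every $w'\in W'$ I will show that the displayed series, paired with $w'$, is precisely the expansion in the indicated region of a single rational function, namely
$$R\!\left(\langle w', E(Y_W^L(u_1^{(0)}, z_1^{(0)})\cdots Y_W^{s(R)}(u_{\alpha_0}^{(0)}, z_{\alpha_0}^{(0)})Y_W^L(u_1^{(1)}, z_1^{(1)})\cdots Y_W^{s(R)}(u_{\alpha_n}^{(n)}, z_{\alpha_n}^{(n)})w)\rangle\right),$$
where the full product of $\alpha_0+\alpha_1+\cdots+\alpha_n=m+n$ vertex operators carries the variables $z_t^{(i)}$ in place of the $Y_V(\cdot,z_t^{(i)}-\zeta_i)\one$ arguments. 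Its existence and pole structure are guaranteed by the pole-order condition together with Example \ref{exam-n-cochain}, so the work lies entirely in identifying which sequence of expansions of this rational function reproduces the given series and in which region that sequence converges.

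First I would treat the inner factor $E_W^{(l,n-l)}(Y_V(u_1^{(1)}, z_1^{(1)}-\zeta_1)\cdots\one\otimes\cdots)(\zeta_1,\dots,\zeta_n)$ exactly as in the proposition immediately preceding this one. There, for each $w'$, this object is obtained from the rational function above by expanding the negative powers of $z_s^{(i)}-z_t^{(i)}$ as power series in the differences $z_t^{(i)}-\zeta_i$ within each block $i$, and the negative powers of $z_s^{(i)}-z_t^{(j)}$ across distinct blocks $i\neq j$ as power series in $(z_s^{(i)}-\zeta_i)$ and $(z_t^{(j)}-\zeta_j)$ relative to $\zeta_i-\zeta_j$. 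These expansions are legitimate precisely under the last two families of inequalities in the statement.

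Next I would account for the prefix $Y_W^L(u_1^{(0)}, z_1^{(0)})\cdots Y_W^{s(R)}(u_{\alpha_0}^{(0)}, z_{\alpha_0}^{(0)})$ acting on the resulting $\overline{W}$-valued rational function, exactly as in Proposition \ref{ComposableExample-1}. Each coefficient of the outer series is, by the $E$-notation, a fixed mode $(Y_W^L)_k$ or $(Y_W^{s(R)})_k$ applied to the inner element of $\overline{W}$, and the outer series as a whole is obtained by expanding the negative powers of $z_s^{(0)}-z_t^{(i)}$ as power series in the inner variables $z_t^{(i)}$. Since $z_t^{(i)}=\zeta_i+(z_t^{(i)}-\zeta_i)$ gives $|z_t^{(i)}|\leq|\zeta_i|+|z_t^{(i)}-\zeta_i|$, domination of the prefix over every inner variable is exactly the requirement $|z_{\alpha_0}^{(0)}|>|\zeta_i|+|z_t^{(i)}-\zeta_i|$ for all $i,t$; together with the chain $|z_1^{(0)}|>\cdots>|z_{\alpha_0}^{(0)}|$ this is the first family of inequalities.

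The main obstacle will be the bookkeeping of the nested expansions: I must verify that performing the outer prefix expansion and the two inner (inter-block and intra-block) expansions in the order above reproduces exactly the series in the statement, and that the region of joint validity is exactly the intersection of the three displayed families. This reduces, as in the previous two proofs, to checking for each pair of variables which difference is expanded in which variable and confirming that no expansion is performed in two incompatible ways; once this compatibility is in hand, absolute convergence to the stated rational function is automatic, and its $\overline{W}$-valuedness is inherited from Example \ref{exam-n-cochain} via the pole-order condition.
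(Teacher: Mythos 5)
Your proposal is correct and follows essentially the same route as the paper's proof: pair with $w'\in W'$, recognize the series as the result of composing three families of expansions of the single rational function (outer variables against inner variables with the further rewriting $z_t^{(i)}=\zeta_i+(z_t^{(i)}-\zeta_i)$, intra-block differences in powers of $z_t^{(i)}-\zeta_i$, and inter-block differences relative to $\zeta_i-\zeta_j$), and read off the displayed region as where these expansions jointly converge absolutely. The "bookkeeping" you defer is precisely what the paper's proof consists of — an enumeration of exactly these expansions — so your plan matches it step for step.
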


\begin{proof}
It suffices to notice that the series in question, paired with any $w'\in W'$, is obtained from the following expansions of the corresponding rational function:
\begin{enumerate}
\item For $s = 1, ..., \alpha_0, i = 0, ..., n, t = 1, ..., \alpha_i$, expand the negative powers $z_s^{(0)}-z_t^{(i)}$ as power series of $z_t^{(i)}$. When $i \geq 1$, one further expands the positive powers of $z_t^{(i)} = \zeta_i + (z_t^{(i)}-\zeta_i)$ as polynomials of $\zeta_i$ and $z_t^{(i)}$. 
\item For $i = 1, ..., n, 1\leq s < t \leq \alpha_i$, expand the negative powers of $z_s^{(i)}-z_t^{(i)} =z_s^{(i)}-\zeta_i -(z_t^{(i)}-\zeta_i)$ as power series of $(z_t^{(i)}-\zeta_i)$. 
\item For $1\leq i < j \leq n, s=1, ..., \alpha_i, t = 1, ..., \alpha_j$, expand the negative powers of $z_s^{(i)} - z_t^{(j)} = (\zeta_i - \zeta_j)+ (z_s^{(i)}-\zeta_i-z_t^{(j)} + \zeta_j)$ as power series of $(z_s^{(i)}-\zeta_i-z_t^{(j)} + \zeta_j)$, then further expand the positive powers of $(z_s^{(i)}-\zeta_i-z_t^{(j)} + \zeta_j)$ as polynomials of $(z_s^{(i)} - \zeta_i)$ and $(z_t^{(j)}-\zeta_j)$. 
\end{enumerate}
Thus the series in question converges absolutely when 
\begin{align*}
& |z_1^{(0)}|> \cdots > |z_{\alpha_0}^{(0)}| > |\zeta_i|+|z_t^{(i)}-\zeta_i|, i = 1, ..., n, t = 1, ..., \alpha_i\\
& |z_1^{(i)}-\zeta_i|> \cdots > |z_{\alpha_i}^{(i)}-\zeta_i|, i = 1, ..., n\\
& |\zeta_i - \zeta_j|> |z_s^{(i)} - \zeta_i| + |z_t^{(j)} -\zeta_j|, 1\leq i < j \leq n, s = 1, ..., \alpha_i, t = 1, ..., \alpha_j. 
\end{align*}

\end{proof}


\subsection{Associativity and commutativity extended to $\protect\overline{W}$-valued rational functions} In this subsection we consider the vertex operator action on more general $\overline{W}$-valued rational functions. Let $(z_1, ..., z_n)\in F_n \C$. Let $v\in V$ and $x$ be a formal variable. Let $f$ be a $\overline{W}$-valued rational function. Then $(Y_W^L)_n(v)$ acts on the $\overline{W}$-element
\begin{align*}
\overline{w} & = f(z_1, ..., z_n)
\end{align*} 
Thus the vertex operator $Y_W^L(u, x)$ acting on $\overline{w}$ is the following \textit{single }series of elements in $\overline{W}$:
\begin{align*}
Y_W^L(v, x) \overline{w} & = \sum_{n\in \Z} (Y_W^L)_n(u) \overline{w} x^{-n-1}\\
&= \sum_{n\in \Z} (Y_W^L)_n(u) f(z_1, ..., z_n) x^{-n-1}
\end{align*}
If we pair the above with $w'\in W'$, then the coefficient of $x^{-n-1}$ in $\langle w', Y_W^L(v, x)\overline{w}\rangle$ is just
$$\langle w', (Y_W^L)_n(u)f(z_1, ..., z_n))\rangle$$
which is rational function in $z_1, ..., z_n$ with the only possible poles at $z_i = z_j, 1\leq i < j \leq n$. Moreover, if $n$ is sufficiently negative, the coefficient is zero. Thus the series $\langle w', Y_W^L(v, x)\overline{w}\rangle$ has at most finitely many positive powers. 

Similarly, for $v_1, v_2\in V$ and formal variables $x_1, x_2$, the series
$$Y_W^L(v_1, x_1) Y_W^L(v_2, x_2)\overline{w} $$
and 
$$Y_W^L(Y_V(v_1, x_0)v_2, x_2)\overline{w}$$
are understood as \textit{double} series of elements in $\overline{W}$
\begin{align*}
 Y_W^L(v_1, x_1) Y_W^L(v_2, x_2)\overline{w} =&  \sum_{k_1,  k_2\in \Z} (Y_W^L)_{k_1}(v_1) (Y_W^L)_{k_2}(v_2)\overline{w} x_1^{-k_1-1} x_2^{-k_2-1}\\
= &  \sum_{k_1,  k_2\in \Z} (Y_W^L)_{k_1}(v_1) (Y_W^L)_{k_2}(v_2)f(z_1, ..., z_n) x_1^{-k_1-1} x_2^{-k_2-1}\\
 Y_W^L(Y_V(v_1, x_0)v_2, x_2)\overline{w} = &  \sum_{k_1,  k_2\in \Z} (Y_W^L)_{k_1}((Y_V)_{k_1}(v_1) v_2)\overline{w} x_0^{-k_1-1} x_2^{-k_2-1}\\
= &  \sum_{k_1,  k_2\in \Z} (Y_W^L)_{k_1}((Y_V)_{k_1}(v_1) v_2) f(z_1, ..., z_n) x_0^{-k_1-1} x_2^{-k_2-1}
\end{align*}
In general, we don't know if these two series converge. But if $\overline{w}$ is chosen appropriately and one of them converges absolutely under certain conditions, then the other also converges absolutely. More precisely, 

\begin{prop}
Let $v_1, v_2\in V$. Let $f\in \widetilde{W}_{z_3, ..., z_{n+2}}$ such that for every $(z_1, ..., z_{n+2})\in F_{n+2}\C$ with $|z_1|>|z_2|>|z_i|, i =3, ..., n+2$, the series
$$Y_W^L(v_1, z_1)Y_W^L(v_2, z_2)f(z_3, ..., z_n)$$
converges absolutely to a $\overline{W}$-valued rational function. Then for every $(z_1, ..., z_{n+2})\in F_{n+2} \C$ such that $|z_2|>|z_1-z_2|+|z_i|, i = 3, ..., n+2$, the series 
$$Y_W^L(Y_V(v_1, z_1-z_2)v_2, z_2)f(z_3, ..., z_n)$$
also converges absolutely to the same $\overline{W}$-valued rational function. 
\end{prop}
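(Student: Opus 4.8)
The plan is to pair everything against an arbitrary $w'\in W'$ and reduce the statement to an equality of two expansions of a single scalar rational function. By bilinearity I may assume $v_1,v_2$ and $w'$ are homogeneous. I fix a point $(z_3,\dots,z_{n+2})\in F_n\C$ with distinct coordinates and set $\overline{w}=f(z_3,\dots,z_{n+2})\in\overline{W}$, which is now a genuine fixed element of $\overline{W}$. With this the two expressions in the statement become series in the single pair of variables $(z_1,z_2)$ (respectively in $(z_1-z_2,z_2)$), whose coefficients are obtained by acting on $\overline{w}$ with the modes of $Y_W^L$ and of $Y_V$. The hypothesis says exactly that, for $|z_1|>|z_2|>|z_i|$, the left-hand series converges absolutely to $\langle w',F(z_1,\dots,z_{n+2})\rangle$ for the asserted $\overline{W}$-valued rational function $F$; I abbreviate this function as $H(z_1,z_2)$ and note that, by Definition \ref{arcWRat}, its only poles in $z_1,z_2$ lie at $z_1=z_2$, $z_1=z_i$ and $z_2=z_i$.

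Next I would decompose $\overline{w}=\sum_{m}\pi_m\overline{w}$ into homogeneous components and invoke the associativity axiom of Definition \ref{DefMOSVA-L} for each honest vector $\pi_m\overline{w}\in W$. The decisive point is a finiteness observation: since the modes of $Y_W^L$ and $Y_V$ are homogeneous and $w'$ is supported in finitely many weights, for each fixed monomial in $z_1,z_2$ (respectively in $z_1-z_2,z_2$) only finitely many $m$ contribute. Hence, coefficient by coefficient, both series are finite sums over $m$ of the corresponding series in which $\overline{w}$ is replaced by $\pi_m\overline{w}$. For each such homogeneous vector the module axiom gives a rational function $H_m(z_1,z_2)$, with poles only at $z_1=0$, $z_2=0$, $z_1=z_2$, to which $\langle w',Y_W^L(v_1,z_1)Y_W^L(v_2,z_2)\pi_m\overline{w}\rangle$ converges for $|z_1|>|z_2|>0$ and $\langle w',Y_W^L(Y_V(v_1,z_1-z_2)v_2,z_2)\pi_m\overline{w}\rangle$ converges for $|z_2|>|z_1-z_2|>0$.

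Combining the two steps, the hypothesis identifies $H$ with $\sum_m H_m$ as a convergent double series in the region $|z_1|>|z_2|>\max_i|z_i|$. It then remains to show that the right-hand series converges absolutely for $|z_2|>|z_1-z_2|+|z_i|$ and sums to the same $H$. Here I would argue that every generic point of this region lies in each per-component domain $|z_2|>|z_1-z_2|>0$, so that term by term the right-hand series reproduces $\sum_m H_m$; and that the pole orders are uniformly bounded. Concretely, Remark \ref{RemaPoleCond-1}, together with the pole-order condition on $V$ and $W$, bounds the orders $p_{ij}$ of the poles $z_1=z_2$, $z_1=z_i$, $z_2=z_i$ of $H$ independently of the component, which yields a common region of absolute convergence for the reassembled right-hand series. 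By the uniqueness of the expansion of a rational function in a fixed region (and the analytic-continuation lemmas of \cite{Q-Rep}, e.g.\ Lemma 4.5), this forces the right-hand series to be the expansion of $H$ in the stated region, which is what is claimed.

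I expect the genuine difficulty to be precisely this last reassembly rather than any single invocation of associativity. Each $H_m$ carries spurious poles at $z_1=0$ and $z_2=0$ that are absent from $H$ at generic $z_i$; these cancel only after summing over the infinitely many homogeneous components, so the passage from per-component associativity to associativity for $\overline{w}$ is emphatically not a formal manipulation of a finite sum. The role of the pole-order condition, through Remark \ref{RemaPoleCond-1}, is exactly to make this infinite sum behave like a finite one: by bounding the orders of the surviving poles uniformly in $m$ it singles out one region in which the right-hand series converges absolutely to $H$, and thereby legitimizes the interchange of the summation over components with the vertex-operator expansion.
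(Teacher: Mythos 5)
Your first steps are fine (pairing with $w'$, freezing $(z_3,\dots,z_{n+2})$, decomposing $\overline{w}=f(z_3,\dots,z_{n+2})$ into homogeneous components, and invoking the module associativity axiom for each $\pi_m\overline{w}\in W$), and you correctly locate the crux in the reassembly over infinitely many components. But the reassembly is precisely where your argument supplies no mechanism, and the tools you cite cannot supply one. The pole-order condition gives a bound uniform in $m$ only for the pole of $H_m$ at $z_1=z_2$ (it depends only on $v_1,v_2$); the poles of $H_m$ at $z_1=0$ and $z_2=0$ have orders depending on $\pi_m\overline{w}$, and these genuinely grow with $m$ (for homogeneous $w'$ of weight $h$, the power of $z_2$ contributed by $\pi_m\overline{w}$ is $h-\mathrm{wt}\,v_2-m$, so the $z_2=0$ pole order of $H_m$ grows linearly in $m$). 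Remark \ref{RemaPoleCond-1} concerns rational functions of the form $E(Y_W^L\cdots Y_W^{s(R)}\cdots w)$ with $w\in W$ and says nothing about an abstract $f\in\widetilde{W}_{z_3,\dots,z_{n+2}}$, while bounding the poles of the single limit function $H$ is vacuous for uniformity questions. What you actually need is (i) absolute convergence of the reassembled double series at each point of $\{|z_2|>|z_1-z_2|+|z_i|\}$ and (ii) identification of its sum with $H$ there. Since each monomial receives a contribution from only finitely many $m$, (i) amounts to summability over $m$ of the absolute sums of the \emph{iterate} expansions of the $H_m$, whereas your hypothesis controls only the \emph{product} expansions, and only on $|z_1|>|z_2|>\max_i|z_i|$. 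The target region contains points with $|z_2|>|z_1|$ lying outside the product region; there $\sum_m H_m$ is a resolution-of-identity insertion which in general diverges, and nothing bounds the $H_m$ (or their iterate coefficients) uniformly in $m$ at such points. Uniform lower truncation in $(z_1-z_2)$ alone cannot force convergence, and ``uniqueness of expansions'' or Lemma 4.5 of \cite{Q-Rep} can only be invoked \emph{after} convergence is established; moreover, Lemma 4.5 requires an iterated series in complex variables with lower truncation, whereas your outer index is the weight $m$, to which no complex variable or rational-function expansion is attached.

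The paper's proof avoids this trap by decomposing differently: it never freezes $(z_3,\dots,z_{n+2})$, but instead uses the second condition of Definition \ref{arcWRat} (see Remark \ref{SecondCond}) to expand $f$ itself as a lower-truncated Laurent-type series $\sum f_{k_3\cdots k_{n+2}}z_3^{k_3}\cdots z_{n+2}^{k_{n+2}}$ whose coefficients $f_{k_3\cdots k_{n+2}}$ lie in $W$, applies module associativity to each such honest coefficient, and then applies Lemma 4.5 of \cite{Q-Rep} twice to the resulting multi-variable iterated series: once to show that the product-type joint series is the expansion of the limit rational function in $|z_1|>\cdots>|z_{n+2}|$, and once more to show that the iterate-type joint series is the expansion of the \emph{same} rational function in the iterate region. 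It is this last step --- upgrading iterated convergence plus lower truncation in each complex variable to joint absolute convergence in the expansion region of the rational function --- that enlarges the domain of convergence beyond where the hypothesis directly applies, and it has no analogue in the weight-graded decomposition. To salvage your route you would need a genuinely new ingredient (some Hartogs-type statement giving locally uniform convergence of the homogeneous expansion $\sum_m H_m$ off the product region), which appears neither in the paper nor in your proposal.
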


\begin{proof}
By Definition \ref{arcWRat} and Remark \ref{SecondCond}, we know that $f(z_3, ..., z_n)$ can be expanded in the region 
$$\{(z_3, ..., z_{n+2})\in \C^n: |z_3|>\cdots > |z_n|\}$$
as an absolutely convergent series 
$$f(z_3, ..., z_{n+2}) = \sum_{k_3, ..., k_{n+2}\in \Z} f_{k_3 ..., k_{n+2}} z_3^{k_3}\cdots z_{n+2}^{k_{n+2}}$$
in $W[[z_3, z_3^{-1}, ..., z_{n+2}, z_{n+2}^{-1}]]$. This expansion is obtained by expanding each negative power of $z_i - z_j$ as a power series in $z_j$, for $3\leq i < j \leq n+2$. Thus the series is lower-truncated in $z_{n+2}$. The coefficient of each fixed power of $z_{n+2}$, as a series in $z_3, ..., z_{n+1}$, is lower-truncated in $z_{n+1}$. In general, for each $i=3, ..., n+1$ and each $k_{i+1}, ..., k_{n+2}\in \Z$, the coefficient of $z_{i+1}^{k_{i+1}}\cdots z_{n+2}^{k_{n+2}}$, as a series in $z_3, ..., z_i$, is lower-truncated in $z_i$. If we pick $M_i\in \Z$ such that the lowest power of $z_i$ is $-M_i$, then we can recover the coefficient of the series from the following formula 
$$f_{k_3 ... k_{n+2}} = \lim_{z_3 = 0} \cdots \lim_{z_{n+2}=0} \left(\frac \partial {\partial z_{3}}\right)^{k_{3} + M_{3}}\cdots \left(\frac \partial {\partial z_{n+2}}\right)^{k_{n+2} + M_{n+2}}(z_3^{M_3}\cdots z_{n+2}^{M_{n+2}}f(z_3, ..., z_n))$$
Now by assumption, the series 
\begin{align*}
& Y_W^L(u_1, z_1)Y_W^L(u_2, z_2)f(z_3, ..., z_{n+2})\\
= & \sum_{k_1, k_2 \in \Z} (Y_W^L)_{k_1}(u_1) (Y_W^L)_{k_2}(u_2) f(z_3, ..., z_{n+2}) z_1^{-k_1-1}z_2^{-k_2-1}
\end{align*}
converges absolutely when $|z_1|>|z_2|>|z_i|, i = 3, ..., n+2$ to a $\overline{W}$-valued rational function. This means the following iterated series  
\begin{align*}
& Y_W^L(u_1, z_1)Y_W^L(u_2, z_2)\left(\sum_{k_3, ..., k_{n+2}\in \Z} f_{k_3 ..., k_{n+2}} z_3^{k_3}\cdots z_{n+2}^{k_{n+2}}\right)\\
=& \sum_{k_1, k_2\in \Z}\left(\sum_{k_3, ..., k_{n+2}\in \Z} (Y_W^L)_{k_1}(u_1)(Y_W^L)_{k_2}(u_2) f_{k_3 ..., k_{n+2}} z_3^{k_3}\cdots z_{n+2}^{k_{n+2}}\right)z_1^{-k_1-1}z_2^{-k_2-1},
\end{align*}
viewed as a double series in $z_1, z_2$ whose coefficients are $$\sum\limits_{k_3, ..., k_{n+2}\in \Z} (Y_W^L)_{k_1}(u_1)(Y_W^L)_{k_2}(u_2) f_{k_3 ..., k_{n+2}} z_3^{k_3}\cdots z_{n+2}^{k_{n+2}},$$ 
converges absolutely when $|z_1|>\cdots > |z_{n+2}|$ to the same $\overline{W}$-valued rational function. Moreover, the power of $z_2$ is lower-truncated. And for each fixed power of $z_2$, the power of $z_1$ in coefficient series is also lower-truncated. Thus by Lemma 4.5 in \cite{Q-Rep}, the series
$$\sum_{k_1,  ..., k_{n+2}\in \Z} (Y_W^L)_{k_1}(u_1)(Y_W^L)_{k_2}(u_2) f_{k_3 ..., k_{n+2}} z_1^{-k_1-1}z_2^{-k_2-1}z_3^{k_3}\cdots z_{n+2}^{k_{n+2}} $$
is precisely the expansion of the $\overline{W}$-valued rational function 
\begin{equation}\label{Assoc-Rat-1}
E(Y_W^L(u_1, z_1)Y_W^L(u_2, z_2)f(z_3, ..., z_{n+2}))
\end{equation}
in the region 
$$\{(z_1, ..., z_{n+2})\in \C^{n+2}: |z_1|>\cdots > |z_{n+2}|\}$$
In particular, the series converges absolutely in this region. 

By associativity, for fixed $k_3, ..., k_{n+2}$, when $|z_1|>|z_2|>|z_1-z_2|>0$, we have
\begin{align*}
Y_W^L(u_1, z_1)Y_W^L(u_2, z_2) f_{k_3 ..., k_{n+2}} &=Y_W^L(Y_V(u_1, z_1-z_2)u_2, z_2) f_{k_3 ..., k_{n+2}} \\
& = \sum_{k_1, k_2\in \Z}(Y_W^L)_{k_1}((Y_V)_{k_2}(u_1)u_2)(z_1-z_2)^{-k_1-1}z_2^{-k_2-1} 
\end{align*}
Thus the series 
\begin{align*}
& Y_W^L(Y_V(u_1, z_1-z_2)u_2, z_2)\sum_{k_3, ..., k_{n+2}\in \Z} f_{k_3 ..., k_{n+2}} z_3^{k_3}\cdots z_{n+2}^{k_{n+2}}\\
=& \sum_{k_3, ..., k_{n+2}\in \Z} \left(\sum_{k_1, k_2\in \Z}(Y_W^L)_{k_1}((Y_V)_{k_2}(u_1)u_2)f_{k_3 ..., k_{n+2}} (z_1-z_2)^{-k_1-1}z_2^{-k_2-1} \right) z_3^{k_3}\cdots z_{n+2}^{k_{n+2}}
\end{align*}
viewed as a series in $z_3, ..., z_{n+2}$ whose coefficients are $$\sum_{k_1, k_2\in \Z}(Y_W^L)_{k_1}((Y_V)_{k_2}(u_1)u_2)f_{k_3 ..., k_{n+2}}(z_1-z_2)^{-k_1-1}z_2^{-k_2-1},$$ converges absolutely when 
$$|z_1|>|z_2|>\cdots > |z_{n+2}|, |z_2|>|z_1-z_2|>0. $$
Moreover, for every $i = n+2, ..., 3$ and every $k_{i+1}, ..., k_{n+2}$, the coefficient series of $z_{i+1}^{k_{i+1}}\cdots z_{n+2}^{k_{n+2}}$ is lower-truncated in $z_i$. One then sees from the Lemma 4.5 in \cite{Q-Rep} that the series 
\begin{align*}
 & Y_W^L(Y_V(u_1, z_1-z_2)u_2, z_2)\sum_{k_3, ..., k_{n+2}\in \Z} f_{k_3 ..., k_{n+2}} z_3^{k_3}\cdots z_{n+2}^{k_{n+2}} \\
=& \sum_{k_1, k_2, k_3, ..., k_{n+2}\in \Z} (Y_W^L)_{k_1}((Y_V)_{k_2}(u_1)u_2)f_{k_3 ..., k_{n+2}}(z_1-z_2)^{-k_1-1}z_2^{-k_2-1} z_3^{k_3}\cdots z_{n+2}^{k_{n+2}}
\end{align*}
is the expansion of the $\overline{W}$-valued rational function (\ref{Assoc-Rat-1}) in the region
$$\{(z_1, ..., z_{n+2})\in \C^{n+2}: |z_2|>|z_1-z_2|+|z_3|, |z_1-z_2|>0, |z_3|>\cdots > |z_{n+2}|\}$$
In particular, the series converges absolutely in the region. We then sum up all $k_3, ..., k_{n+1}$, to see that the double series 
\begin{align*}
& Y_W^L(Y_V(u_1, z_1-z_2)u_2, z_2)f(z_3, ..., z_{n+2})
= & \sum_{k_1, k_2\in \Z} (Y_W^L)_{k_1}((Y_V)_{k_2}(u_1)u_2)f(z_3, ..., z_{n+2})
\end{align*}
of elements in $\overline{W}$ is precisely the expansion of the $\overline{W}$-rational function (\ref{Assoc-Rat-1}) in the region 
$$\{(z_1, ..., z_{n+2}): |z_2|>|z_1-z_2|+|z_i|, i = 3, ..., n+2\}$$
In particular, the double series converges absolutely in the region. 
\end{proof}

\begin{cor}
For $u_1, u_2\in V$ and $f\in \widetilde{W}_{z_3, ..., z_{n+2}}$ chosen as above, we have
$$Y_W^L(u_1, z_1)Y_W^L(u_2, z_2)f(z_3, ..., z_{n+2}) = Y_W^L(Y_V(u_1, z_1-z_2)u_2, z_2)f(z_3, ..., z_{n+2})$$
for every $(z_1, ..., z_{n+2})\in F_{n+2}\C$ such that $|z_1|>|z_2|> |z_1-z_2|+|z_i|, i = 3, ..., n+2$. Moreover, we have 
$$E(Y_W^L(u_1, z_1)Y_W^L(u_2, z_2)f(z_3, ..., z_{n+2})) = E(Y_W^L(Y_V(u_1, z_1-z_2)u_2, z_2)f(z_3, ..., z_{n+2}))$$
where both sides are regarded as $\overline{W}$-valued rational functions in $\widetilde{W}_{z_1, ..., z_{n+2}}$.
\end{cor}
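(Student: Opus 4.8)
The plan is to deduce both equalities directly from the preceding proposition, which already carries all the analytic content. That proposition establishes two convergence facts about the same object (\ref{Assoc-Rat-1}): first, the iterated series $Y_W^L(u_1, z_1)Y_W^L(u_2, z_2)f(z_3, \dots, z_{n+2})$ converges absolutely in the region $|z_1| > |z_2| > |z_i|$, $i = 3, \dots, n+2$, to the $\overline{W}$-valued rational function (\ref{Assoc-Rat-1}); and second, the series $Y_W^L(Y_V(u_1, z_1-z_2)u_2, z_2)f(z_3, \dots, z_{n+2})$ converges absolutely in the region $|z_2| > |z_1-z_2| + |z_i|$, $i = 3, \dots, n+2$, to the very same rational function (\ref{Assoc-Rat-1}).

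First I would verify that the region specified in the statement, namely $|z_1| > |z_2| > |z_1-z_2| + |z_i|$ for $i = 3, \dots, n+2$, lies in the intersection of the two convergence regions above. Indeed, on this region we have $|z_1| > |z_2|$ by hypothesis, and since $|z_1-z_2| \geq 0$ the inequality $|z_2| > |z_1-z_2| + |z_i|$ forces $|z_2| > |z_i|$; hence every such point satisfies $|z_1| > |z_2| > |z_i|$ and therefore lies in the first region. Meanwhile the defining inequality $|z_2| > |z_1-z_2| + |z_i|$ is precisely the condition placing the point in the second region. Thus both series converge absolutely on the stated region.

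Since on this region both series are absolutely convergent and converge to the same element (\ref{Assoc-Rat-1}) of $\overline{W}$, their sums agree at every point of the region, which is exactly the first displayed equation. For the second equation, I would simply observe that, by the definition of the $E$-notation, $E(Y_W^L(u_1, z_1)Y_W^L(u_2, z_2)f(z_3, \dots, z_{n+2}))$ is the rational function to which the first series converges and $E(Y_W^L(Y_V(u_1, z_1-z_2)u_2, z_2)f(z_3, \dots, z_{n+2}))$ is the rational function to which the second series converges; by the preceding proposition both of these are literally (\ref{Assoc-Rat-1}), so they coincide as elements of $\widetilde{W}_{z_1, \dots, z_{n+2}}$. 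There is no real obstacle here: the absolute convergence of each series and the identification of their common limit as the single rational function (\ref{Assoc-Rat-1}) were carried out in the preceding proposition, and all that remains is the elementary region inclusion above together with reading off the two equalities.
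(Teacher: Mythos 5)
Your proposal is correct and matches the paper's intended argument: the corollary is stated as an immediate consequence of the preceding proposition, whose proof already shows both series converge to the common rational function (\ref{Assoc-Rat-1}), so only the region inclusion and the reading-off of the two equalities remain. Your verification that $|z_1|>|z_2|>|z_1-z_2|+|z_i|$ places a point in both convergence regions is exactly the elementary step needed, so nothing is missing.
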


One can generalize the above conclusions to the product of any numbers of $Y_W^L$ and $Y_W^{s(R)}$ vertex operators. For convenience, we list the conclusions we will need in this paper in the following theorem. 

\begin{thm}\label{ConvRef}
\begin{enumerate}
\item Let $u_1, u_2\in V$, $f\in \widetilde{W}_{z_3, ..., z_{n+2}}$ such that 
$$Y_W^L(u_1, z_1)Y_W^{s(R)}(u_2, z_2)f(z_3, ..., z_{n+2})$$
converges absolutely to a $\overline{W}$-valued rational function for every $(z_1,..., z_{n+2})\in F_{n+2}\C$ such that $|z_1|>|z_2|>|z_i|, i=3, ..., n+2$. Then the series
$$Y_W^{s(R)}(u_2, z_2)Y_W^L(u_1, z_1)f(z_3, ..., z_{n+2})$$
also converges absolutely to the same $\overline{W}$-valued rational function for every $(z_1,..., z_{n+2})\in F_{n+2}\C$ such that $|z_2|>|z_1|>|z_i|, i = 3, ..., n+2$. Moreover, we have
$$E(Y_W^L(u_1, z_1)Y_W^{s(R)}(u_2, z_2)f(z_3, ..., z_{n+2}))=E(Y_W^{s(R)}(u_2, z_2)Y_W^L(u_1, z_1)f(z_3, ..., z_{n+2}))$$
as elements in $\widetilde{W}_{z_1, ..., z_{n+2}}$. 
\item Let $u_1, ..., u_m \in V$, $f\in \widetilde{W}_{z_{m+1}, ..., z_{m+n}}$ such that 
$$Y_W^L(u_1, z_1)\cdots Y_W^L(u_m, z_m)f(z_{m+1}, ..., z_{m+n})$$
converges absolutely to a $\overline{W}$-valued rational function for every $(z_1, ..., z_{m+n})\in F_{m+n}\C$ such that $|z_1|>\cdots > |z_m|>|z_i|, i = m+1, ..., m+n$. Then the series
$$Y_W^L(Y_V(u_1, z_1-\zeta)\cdots Y_V(u_m, z_m-\zeta)\one, \zeta)f(z_{m+1}, ..., z_{m+n})$$
also converges absolutely to the same $\overline{W}$-valued rational function whenever $(z_1, ..., z_{m+n})\in F_{m+n}\C, |\zeta|>|z_1-\zeta|+|z_i|, i = m+1, ..., m+n, |z_1-\zeta|>\cdots > |z_m-\zeta|$. Moreover, we have
\begin{align*}
& E(Y_W^L(u_1, z_m)\cdots Y_W^L(u_m, z_m) f(z_{m+1}, ..., z_{m+n})) \\ 
& \qquad = E(Y_W^L(Y_V(u_1, z_1-\zeta)\cdots Y_V(u_m, z_m-\zeta)\one, \zeta)f(z_{m+1}, ..., z_{m+n}))
\end{align*}
as elements in $\widetilde{W}_{z_1, ..., z_{m+n}}$. 
\item Let $u_1, ..., u_m \in V$, $f\in \widetilde{W}_{z_{m+1}, ..., z_{m+n}}$ such that 
$$Y_W^{s(R)}(u_m, z_m)\cdots Y_W^{s(R)}(u_1, z_1)f(z_{m+1}, ..., z_{m+n})$$
converges absolutely to a $\overline{W}$-valued rational function for every $(z_1, ..., z_{m+n})\in F_{m+n}\C$ such that $|z_m|>\cdots > |z_1|>|z_i|, i = m+1, ..., m+n$. Then the series
$$Y_W^{s(R)}(Y_V(u_1, z_1-\zeta)\cdots Y_V(u_m, z_m-\zeta)\one, \zeta)f(z_{m+1}, ..., z_{m+n})$$
also converges absolutely to the same $\overline{W}$-valued rational function whenever $(z_1, ..., z_{m+n})\in F_{m+n}\C, |\zeta|>|z_1-\zeta|+|z_i|, i = m+1, ..., m+n, |z_1-\zeta|>\cdots > |z_m-\zeta|$. Moreover, we have
\begin{align*}
& E(Y_W^{s(R)}(u_m, z_m)\cdots Y_W^{s(R)}(u_1, z_1) f(z_{m+1}, ..., z_{m+n})) \\ 
& \qquad = E(Y_W^{s(R)}(Y_V(u_1, z_1-\zeta)\cdots Y_V(u_m, z_m-\zeta)\one, \zeta)f(z_{m+1}, ..., z_{m+n}))
\end{align*}
as elements in $\widetilde{W}_{z_1, ..., z_{m+n}}$. 
\end{enumerate}
\end{thm}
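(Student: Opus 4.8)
The proof follows the pattern of the Proposition and Corollary immediately preceding the theorem, with the two-operator left case serving as the template. The common mechanism rests on the observation that any $f\in\widetilde{W}_{z_3,\ldots,z_{n+2}}$ expands, by Definition~\ref{arcWRat} and Remark~\ref{SecondCond}, as an absolutely convergent series
$$f(z_3,\ldots,z_{n+2})=\sum_{k_3,\ldots,k_{n+2}\in\Z}f_{k_3\ldots k_{n+2}}\,z_3^{k_3}\cdots z_{n+2}^{k_{n+2}}$$
in $W[[z_3,z_3^{-1},\ldots,z_{n+2},z_{n+2}^{-1}]]$ valid in the region $|z_3|>\cdots>|z_{n+2}|$, with the cascading lower-truncation property recorded there. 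The crucial point is that each coefficient $f_{k_3\ldots k_{n+2}}$ lies in $W$ itself---a genuine element of the bimodule, on which the commutativity and associativity of the module structure apply verbatim. Consequently, for each of the three parts the plan is to substitute this expansion, apply the corresponding module axiom term by term to the coefficients, and reassemble the resulting multiple series using Lemma~4.5 of \cite{Q-Rep}.

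For part (1), I would begin from the hypothesis that $Y_W^L(u_1,z_1)Y_W^{s(R)}(u_2,z_2)f$ converges in $|z_1|>|z_2|>|z_i|$ to a $\overline{W}$-valued rational function; substituting the expansion of $f$ and invoking Lemma~4.5 identifies the resulting multiple series with the expansion of $E(Y_W^L(u_1,z_1)Y_W^{s(R)}(u_2,z_2)f(z_3,\ldots,z_{n+2}))$ in the region $|z_1|>\cdots>|z_{n+2}|$, exactly as in the preceding Proposition. The one substantive change is that the associativity step there is replaced by the commutativity of $Y_W^L$ and $Y_W^{s(R)}$ on genuine elements, established earlier, which records that $E(Y_W^L(u_1,z_1)Y_W^{s(R)}(u_2,z_2)w)=E(Y_W^{s(R)}(u_2,z_2)Y_W^L(u_1,z_1)w)$ for $w\in W$. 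Applying this to each coefficient $f_{k_3\ldots k_{n+2}}$ and reassembling via Lemma~4.5 shows that $Y_W^{s(R)}(u_2,z_2)Y_W^L(u_1,z_1)f$ converges in $|z_2|>|z_1|>|z_i|$ to the expansion of the same rational function, yielding both the convergence claim and the asserted $E$-equality. Parts (2) and (3) proceed identically, except that the termwise input is now iterated associativity---namely $Y_W^L(u_1,z_1)\cdots Y_W^L(u_m,z_m)w=Y_W^L(Y_V(u_1,z_1-\zeta)\cdots Y_V(u_m,z_m-\zeta)\one,\zeta)w$ for $w\in W$ in the region $|\zeta|>|z_1-\zeta|>\cdots>|z_m-\zeta|>0$ (and its $Y_W^{s(R)}$ analogue for part (3))---applied coefficient by coefficient, with the region of convergence of the reassembled series dictated by this associativity expansion.

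The main obstacle is the multiple-series bookkeeping rather than any new analytic input: at each stage one must check that the $(m+n)$-fold series is absolutely convergent and lower-truncated in the correct cascading order of variables so that Lemma~4.5 legitimizes the interchange of summation and identifies the rearranged series with the expansion of a single rational function in the target region. Because the coefficients $f_{k_3\ldots k_{n+2}}$ are honest elements of $W$, no convergence question arises in the termwise application of the module axioms; all the analytic content is confined to controlling these rearrangements and to tracking how the iterated associativity expansion, in parts (2) and (3), determines the precise inequalities defining the region of absolute convergence.
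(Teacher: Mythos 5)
Your proposal is correct and follows essentially the same route as the paper: Theorem \ref{ConvRef} is stated there as a direct generalization of the immediately preceding Proposition and Corollary, whose proofs use exactly the mechanism you describe --- expanding $f$ as a lower-truncated series with coefficients in $W$, applying the module axioms (commutativity, respectively iterated associativity) termwise to those coefficients, and reassembling the multiple series via Lemma 4.5 of \cite{Q-Rep} to identify it as the expansion of the same rational function in the new region. Your handling of part (1), where the termwise input is an equality of $E$-notation rational functions rather than of series in a common region, matches the paper's treatment as well.
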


\section{The cochain complex and the cohomology group}

Let $n$ be a fixed positive integer. We will define cochain complexes from linear maps $V^{\otimes n} \to \widetilde{W}_{z_1, ..., z_n}$ satisfying some natural properties.

\subsection{Linear maps $V^{\otimes n} \to \widetilde{W}_{z_1...z_n}$ satisfying $D$-derivative and $\d$-conjugation properties}

\begin{defn}
A linear map $\Phi: V^{\otimes n} \to \widetilde{W}_{z_1, ..., z_n}$ is said to have the $D$-derivative property if
\begin{enumerate}
\item For $i=1,..., n$, $v_1, ..., v_n\in V, w'\in W'$, 
$$ \langle w', (\Phi(v_1\otimes \cdots\otimes v_{i-1} \otimes D_V v_i \otimes v_{i+1}\otimes \cdots  \otimes v_n))(z_1, ..., z_n)\rangle =\frac{\partial}{\partial z_i} \langle w', (\Phi(v_1\otimes \cdots \otimes v_n))(z_1, ..., z_n)\rangle $$
\item For $v_1, ..., v_n\in V, w'\in W'$, 
$$\langle w', D_W (\Phi(v_1\otimes \cdots v_n))(z_1, ..., z_n)\rangle=\left(\frac{\partial}{\partial z_1} + \cdots + \frac{\partial}{\partial z_n} \right) \langle w', (\Phi(v_1\otimes \cdots \otimes v_n))(z_1, ..., z_n)\rangle $$
\end{enumerate}
\end{defn}

\begin{defn}
A linear map $\Phi: V^{\otimes n} \to \widetilde{W}_{z_1, ..., z_n}$ is said to have the $\d$-conjugation property if for $v_1, ..., v_n\in V, w'\in W', (z_1, ..., z_n)\in F_n\C$ and $z\in \C^\times$ so that $(zz_1, ..., zz_n)\in F_n \C$, 
$$\langle w', z^{\d_W}(\Phi(v_1\otimes \cdots \otimes v_n))(z_1, ..., z_n)\rangle = \langle w', (\Phi(z^{\d_V}v_1 \otimes \cdots z^{\d_V}v_n))(zz_1, ..., zz_n)\rangle$$
\end{defn}

\begin{prop}\label{D-conjugation}
Let $\Phi: V^{\otimes n} \to \widetilde{W}_{z_1, ..., z_n}$ be a linear map satisfying the $D$-derivative property. 
\begin{enumerate}
\item For $v_1, ..., v_n\in V, w'\in W', (z_1, ..., z_n)\in F_n \C, z\in \C$ and $1\leq i \leq n$ such that $(z_1, ..., z_{i-1}, z_i + z, z_{i+1}, ..., z_n)\in F_n \C$, the power series expansion of 
$$\langle w', (\Phi(v_1\otimes \cdots \otimes v_n))(z_1, ..., z_{i-1}, z_i + z, z_{i+1}, ..., z_n)\rangle$$
in positive powers of $z$ is equal to the power series
$$\langle w', (\Phi(v_1\otimes \cdots \otimes v_{i-1} \otimes e^{zD_V} v_i \otimes v_{i+1} \otimes \cdots \otimes v_n))(z_1, ..., z_n)\rangle$$
in $z$, which converges absolutely when $|z|<\min\limits_{1\leq i< j\leq n}|z_i-z_j|$.
\item For $v_1, ..., v_n\in V, w'\in W', (z_1, ..., z_n)\in F_n \C, z\in \C$ so that $(z_1+z, ..., z_n + z)\in F_n\C$, the power series expansion 
$$ \langle w', (\Phi(v_1\otimes\cdots\otimes v_n))(z_1+z,...,z_n+z)\rangle$$
in positive powers of $z$ is equal to the power series
$$\langle w', e^{zD_W} (\Phi(v_1\otimes \cdots \otimes v_n))(z_1, ..., z_n)\rangle. $$ 
which converges absolutely when $|z| < \min\limits_{1\leq i\leq n} |z_i|$
\end{enumerate}
\end{prop}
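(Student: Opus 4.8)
The plan is to reduce both statements to facts about a single-variable holomorphic function obtained by freezing all but the translation parameter $z$, to compute its Taylor coefficients via the $D$-derivative property, and to recognize the resulting series. Throughout I fix $v_1,\dots,v_n\in V$ and $w'\in W'$ and abbreviate $H(w_1,\dots,w_n)=\langle w',(\Phi(v_1\otimes\cdots\otimes v_n))(w_1,\dots,w_n)\rangle$, a rational function whose only poles are at $w_k=w_l$.

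For part (1) I would set $F(z)=H(z_1,\dots,z_{i-1},z_i+z,z_{i+1},\dots,z_n)$. Substituting $w_i=z_i+z$ and $w_k=z_k$ for $k\ne i$ shows $F$ is rational in $z$ with poles only at $z=z_j-z_i$ for $j\ne i$; hence $F$ is holomorphic on $|z|<\min_{j\ne i}|z_j-z_i|$, which contains the disc $|z|<\min_{1\le k<l\le n}|z_k-z_l|$ named in the statement. There $F$ equals $\sum_{m\ge 0}\frac{z^m}{m!}F^{(m)}(0)$, and by the chain rule $F^{(m)}(0)=\partial_{z_i}^m H(z_1,\dots,z_n)$. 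The decisive step is to iterate the first $D$-derivative property: since it holds for every element of $V$, I may substitute $D_V^{m-1}v_i$ for $v_i$ and obtain by induction $\partial_{z_i}^m H=\langle w',(\Phi(\cdots\otimes D_V^m v_i\otimes\cdots))(z_1,\dots,z_n)\rangle$. Multiplying by $\frac{z^m}{m!}$, summing, and using the formal-linear definition $e^{zD_V}v_i=\sum_m\frac{z^m}{m!}D_V^m v_i$ together with the linearity of $\Phi$ yields exactly the asserted series, with convergence inherited from the holomorphy of $F$.

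For part (2) the same scheme applies to $G(z)=H(z_1+z,\dots,z_n+z)$, but here the iteration is the genuine obstacle: the second $D$-derivative property is phrased for $D_W$ applied to $\Phi$ of a tensor, not for $D_W$ applied to an arbitrary $\overline{W}$-valued rational function, so it cannot be re-applied naively to $D_W\Phi(\cdots)$. My remedy is to first establish the auxiliary identity
$$D_W\,\Phi(v_1\otimes\cdots\otimes v_n)=\sum_{i=1}^n \Phi(v_1\otimes\cdots\otimes D_V v_i\otimes\cdots\otimes v_n)$$
as elements of $\widetilde{W}_{z_1,\dots,z_n}$. This is verified by pairing both sides with an arbitrary $w'$: the left side equals $(\sum_k\partial_{z_k})H$ by the second $D$-derivative property, the right side equals $\sum_i\partial_{z_i}H$ by the first, and these agree. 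Because the operator $\mathcal D=\sum_i 1\otimes\cdots\otimes D_V\otimes\cdots\otimes 1$ again maps $V^{\otimes n}$ into $V^{\otimes n}$, I can feed the identity into itself and prove by induction on $m$ that $\langle w',D_W^m\Phi(\cdots)\rangle=(\sum_k\partial_{z_k})^m H$.

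Finally I would assemble part (2): the chain rule gives $G^{(m)}(0)=(\sum_k\partial_{z_k})^m H=\langle w',D_W^m(\Phi(\cdots))(z_1,\dots,z_n)\rangle$, so the Taylor series of $G$ is $\sum_m\frac{z^m}{m!}\langle w',D_W^m\Phi(\cdots)\rangle=\langle w',e^{zD_W}(\Phi(\cdots))(z_1,\dots,z_n)\rangle$, which is the claim. For convergence I would note that under the simultaneous shift $w_k=z_k+z$ the denominator $\prod_{k<l}(w_k-w_l)^{p_{kl}}$ is translation-invariant, so $G$ acquires no singularity in $z$ from the original poles; the $e^{zD_W}$ series therefore converges on the asserted disc $|z|<\min_i|z_i|$ (indeed, since $D_W$ raises weight by $1$ and $W$ is lower-bounded, each homogeneous component of $e^{zD_W}(\Phi(\cdots))(z_1,\dots,z_n)$ is a finite sum, so its pairing with any fixed $w'$ is a polynomial in $z$). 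The only remaining points needing care are this bookkeeping and the legitimacy of extending $\Phi$ and the operators $e^{zD_V},e^{zD_W}$ by formal linearity to $\overline{V}$- and $\overline{W}$-valued arguments, which is precisely the convention fixed at the start of Section~3.
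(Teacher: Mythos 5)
Your proposal is correct and is essentially the argument the paper intends: the paper's proof consists of the single remark that ``the argument of the $D$-conjugation property carries over,'' and that standard argument is precisely your scheme of freezing all variables but the translation parameter, Taylor-expanding the resulting rational (hence holomorphic, indeed for part (2) polynomial) function of $z$, and converting the Taylor coefficients into insertions of $D_V^m$ (resp.\ applications of $D_W^m$) via the iterated $D$-derivative property. Your auxiliary identity $D_W\,\Phi(v_1\otimes\cdots\otimes v_n)=\sum_{i=1}^n\Phi(v_1\otimes\cdots\otimes D_Vv_i\otimes\cdots\otimes v_n)$, proved by pairing against all of $W'$, is exactly the right device for legitimizing the iteration of the second $D$-derivative property, a detail the paper leaves implicit.
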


\begin{proof}
The argument of $D$-conjugation property carries over. 
\end{proof}

\begin{defn}
For every $n\in \mathbb{N}$, we define $\hat{C}_0^n(V, W)$ to be the set of all linear maps from $V^{\otimes n}\to \widetilde{W}_{z_1, ..., z_n}$ that satisfies $D$-derivative property and $\d$-conjugation property. 
\end{defn}

\begin{exam}
Let $l=0, 1, ..., n$. 
For every $w\in W$ such that for every $u\in V$, $Y_W^L(u,x)w\in W[[x]], Y_W^{s(R)}(u, x)w\in W[[x]]$, one checks easily that the map
$$u_1\otimes \cdots \otimes u_n \mapsto E_W^{(l, n-l)}(u_1\otimes\cdots u_n; w)$$
is a linear map $V^{\otimes n} \to \widetilde{W}_{z_1 ... z_n}$ that has the $D$-derivative property and $\d$-conjugation property. 
\end{exam}

\begin{nota}
We will use the notation $E_{W, w}^{(l, n-l)}$ to denote the map in the previous example. 
\end{nota}

Let $\Phi \in \hat{C}_0^n(V, W)$, $u^{(1)}, ..., u^{(n)} \in V$. Consider the following series of $\overline{W}$-valued rational functions:
\begin{align}
& \qquad \Phi (Y_V(u^{(1)}, z^{(1)}-\zeta_1)\one \otimes Y_V(u^{(2)}, z^{(2)}-\zeta_2)\one \otimes \cdots \otimes Y_V(u^{(n)}, z^{(n)} - \zeta_n) \one)(\zeta_1, ..., \zeta_n) \label{SeriesRatFunc-1}
\end{align}
which is a series in variables $z^{(i)}-\zeta_i, i = 1, ..., n$ with
$$\sum_{k_1, ..., k_n\in \Z} \Phi((Y_V)_{k_1}(u^{(1)})\one \otimes \cdots \otimes (Y_V)_{k_n}(u^{(n)})\one)(\zeta_1, ..., \zeta_n) (z^{(1)}-\zeta_1)^{-k_1-1}\cdots (z^{(n)}-\zeta_n)^{-k_n-1}$$
For each $k_1, ..., k_n\in \Z$, the coefficient of $(z^{(1)}-\zeta_1)^{-k_1-1}\cdots (z^{(n)}-\zeta_n)^{-k_n-1}$ is a $\overline{W}$-valued rational function with variables $\zeta_1, ..., \zeta_n$. If paired with $w'\in W'$, then for the complex series 
$$\sum_{k_1, ..., k_n\in \Z} \langle w', \Phi((Y_V)_{k_1}(u^{(1)})\one \otimes \cdots \otimes (Y_V)_{k_n}(u^{(n)})\one)(\zeta_1, ..., \zeta_n)\rangle (z^{(1)}-\zeta_1)^{-k_1-1}\cdots (z^{(n)}-\zeta_n)^{-k_n-1}$$
the coefficient of $(z^{(1)}-\zeta_1)^{-k_1-1}\cdots (z^{(n)}-\zeta_n)^{-k_n-1}$ is a rational function with possible poles at $\zeta_i = \zeta_j$ for $1\leq i < j \leq n$. 

\begin{prop}\label{0comp}
The series (\ref{SeriesRatFunc-1}) converges absolutely when 
$$|\zeta_i - \zeta_j|>|z^{(i)}-\zeta_i| + |z^{(j)}-\zeta_j|$$
to $\Phi(u_1\otimes \cdots \otimes u_n)(z^{(1)}, ..., z^{(n)})$. 
\end{prop}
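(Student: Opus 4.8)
The plan is to rewrite the series (\ref{SeriesRatFunc-1}) as the multivariable Taylor expansion of the rational function $\Phi(u^{(1)}\otimes\cdots\otimes u^{(n)})$ centered at $(\zeta_1,\dots,\zeta_n)$, and then to invoke the convergence of such expansions on polydiscs contained in the domain of holomorphy. This is the simultaneous $n$-variable analogue of Proposition \ref{D-conjugation}(1), and I would present it as such.

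First I would identify the coefficients of the series. Using the creation property together with the $D$-commutator formula and the fact that $D_V\one=0$ (which holds because $Y_V(\one,x)\one=\one$ is constant in $x$), the generating series $g(x)=Y_V(u,x)\one$ satisfies $g'(x)=D_Vg(x)$ in $V[[x]]$ with $g(0)=u$, hence $Y_V(u,x)\one=e^{xD_V}u$. In particular $(Y_V)_k(u)\one=0$ for $k\geq 0$ and $(Y_V)_{-m-1}(u)\one=\frac{1}{m!}D_V^m u$ for $m\geq 0$. Substituting $k_i=-m_i-1$, the series (\ref{SeriesRatFunc-1}) becomes
$$\sum_{m_1,\dots,m_n\geq 0}\frac{1}{m_1!\cdots m_n!}\bigl(\Phi(D_V^{m_1}u^{(1)}\otimes\cdots\otimes D_V^{m_n}u^{(n)})\bigr)(\zeta_1,\dots,\zeta_n)\,(z^{(1)}-\zeta_1)^{m_1}\cdots(z^{(n)}-\zeta_n)^{m_n},$$
a power series in the variables $z^{(i)}-\zeta_i$ with only non-negative exponents.

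Next I would pair with an arbitrary $w'\in W'$ and recognize the scalar series. Set $F(z^{(1)},\dots,z^{(n)})=\langle w',(\Phi(u^{(1)}\otimes\cdots\otimes u^{(n)}))(z^{(1)},\dots,z^{(n)})\rangle$; by Definition \ref{arcWRat} this is a rational function, holomorphic on $F_n\C$, with the only possible singularities at $z^{(i)}=z^{(j)}$. The first part of the $D$-derivative property gives $\partial_{z^{(i)}}\langle w',(\Phi(v_1\otimes\cdots\otimes v_n))(z)\rangle=\langle w',(\Phi(v_1\otimes\cdots\otimes D_V v_i\otimes\cdots\otimes v_n))(z)\rangle$, so iterating yields $\partial_{z^{(1)}}^{m_1}\cdots\partial_{z^{(n)}}^{m_n}F=\langle w',(\Phi(D_V^{m_1}u^{(1)}\otimes\cdots\otimes D_V^{m_n}u^{(n)}))(z)\rangle$. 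Evaluating at $(\zeta_1,\dots,\zeta_n)$ shows that the series above, paired with $w'$, is precisely the multivariable Taylor expansion of $F$ centered at $(\zeta_1,\dots,\zeta_n)$.

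Finally I would settle convergence, which I expect to be the only genuine obstacle. Fix a point with $|\zeta_i-\zeta_j|>|z^{(i)}-\zeta_i|+|z^{(j)}-\zeta_j|$ for all $i<j$; in particular $\zeta_i\neq\zeta_j$, so $(\zeta_1,\dots,\zeta_n)\in F_n\C$ and $F$ is holomorphic near it. Choosing radii $r_i=|z^{(i)}-\zeta_i|+\varepsilon$ with $\varepsilon>0$ less than the (finite) minimum of $\tfrac{1}{2}(|\zeta_i-\zeta_j|-|z^{(i)}-\zeta_i|-|z^{(j)}-\zeta_j|)$ over $i<j$, the closed polydisc $\{|w_i|\leq r_i\}$ centered at $(\zeta_1,\dots,\zeta_n)$ stays inside $F_n\C$, since on it $|z^{(i)}-z^{(j)}|\geq|\zeta_i-\zeta_j|-r_i-r_j>0$. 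As $F$ is holomorphic on a neighborhood of this compact polydisc, its Taylor series converges absolutely and uniformly there, in particular at the chosen point, which lies in the open polydisc. This yields absolute convergence of (\ref{SeriesRatFunc-1}) to $\Phi(u^{(1)}\otimes\cdots\otimes u^{(n)})(z^{(1)},\dots,z^{(n)})$ for every $w'$, hence as a series of $\overline{W}$-valued functions. The hard part is exactly this polydisc-fitting step, namely verifying that the stated region $|\zeta_i-\zeta_j|>|z^{(i)}-\zeta_i|+|z^{(j)}-\zeta_j|$ is covered by polydiscs sitting inside $F_n\C$; the identification of the Taylor coefficients via the $D$-derivative property is formal by comparison.
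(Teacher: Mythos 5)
Your proposal is correct, and it reaches the stated region by a different technical route than the paper. Both proofs begin the same way: the creation property (together with $D_V\one=0$, as you correctly derive) turns (\ref{SeriesRatFunc-1}) into a power series in the $(z^{(i)}-\zeta_i)$ with coefficients $\tfrac{1}{m_1!\cdots m_n!}\Phi(D_V^{m_1}u^{(1)}\otimes\cdots\otimes D_V^{m_n}u^{(n)})(\zeta_1,\dots,\zeta_n)$, and the $D$-derivative property identifies the paired series as the Taylor expansion of the rational function $\langle w',\Phi(u^{(1)}\otimes\cdots\otimes u^{(n)})(\cdot)\rangle$ at $(\zeta_1,\dots,\zeta_n)$; the paper packages this identification as a repeated application of Proposition \ref{D-conjugation}. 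Where you diverge is the convergence step: the paper first obtains convergence only in a smaller, iterated region and then invokes Lemma 4.5 of \cite{Q-Rep} to recognize the series as the specific re-expansion of the rational function in which negative powers of $z^{(i)}-z^{(j)}=(\zeta_i-\zeta_j)+(z^{(i)}-\zeta_i-z^{(j)}+\zeta_j)$ are expanded and the binomials further expanded, whose region of absolute convergence is exactly the stated one; you instead get the stated region in one stroke from the several-variables Cauchy--Taylor theorem, after the triangle-inequality check that a closed polydisc of radii $r_i=|z^{(i)}-\zeta_i|+\varepsilon$ centered at $(\zeta_1,\dots,\zeta_n)$ stays inside $F_n\C$. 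Your argument is more self-contained and makes transparent why the region $|\zeta_i-\zeta_j|>|z^{(i)}-\zeta_i|+|z^{(j)}-\zeta_j|$ appears (it is precisely the union of polydiscs centered at $(\zeta_1,\dots,\zeta_n)$ avoiding the diagonals), at the cost of importing a standard fact from several complex variables; the paper's route stays inside the formal-series/re-expansion framework that the rest of the paper reuses (e.g., in the composability proofs), which is why it quotes Lemma 4.5 of \cite{Q-Rep} rather than polydisc convergence. One pedantic point: uniform convergence on the \emph{closed} polydisc requires holomorphy on a slightly larger one (available by compactness, since the closed polydisc sits in the open set $F_n\C$), but since you only need absolute convergence at the interior point $(z^{(1)},\dots,z^{(n)})$, this does not affect your proof.
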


\begin{proof}
From the creation property, we know that the series is the same as
$$\langle w', \Phi(e^{(z^{(1)}-\zeta_1) D_V}u_1 \otimes \cdots \otimes e^{(z^{(n)}-\zeta_n) D_V}u_n)(\zeta_1, ..., \zeta_n)\rangle$$
We repeatedly use Proposition \ref{D-conjugation} to see that the series converges absolutely to the rational function 
$$R(\langle w', \Phi(u_1\otimes\cdots\otimes u_n)(\zeta_1+z^{(1)}-\zeta_1, ..., \zeta_n+z^{(n)} - \zeta_n)\rangle) $$
when $|z^{(s)}-\zeta_s| < |\zeta_i - \zeta_j|, s=1, ..., n, s \leq i < j \leq n; |z^{(s)}-\zeta_s| < |z^{(t)}-\zeta_j|, s = 2, ..., n, 1\leq t < s \leq j \leq n$. 
Note that the rational function is the same as 
$$R(\langle w', \Phi(u_1\otimes \cdots \otimes u_n)(z^{(1)}, ..., z^{(n)})\rangle)$$
that has the only possible poles at $z^{(i)} = z^{(j)}, 1\leq i < j \leq n$ and does not depend on $\zeta_1, ..., \zeta_n.$ We then apply Lemma 4.5 in \cite{Q-Rep} to see that the series (\ref{SeriesRatFunc-1}) coincides with the expansion of the rational function by expanding the negative powers of $z^{(i)} - z^{(j)} = \zeta_i - \zeta_j + (z^{(i)} - \zeta_i - z^{(j)} + \zeta_j), 1\leq i < j \leq n$. Thus the series (\ref{SeriesRatFunc-1}) converges absolutely when 
$$|\zeta_i - \zeta_j| > |z^{(i)}- \zeta_i| + |z^{(j)} - \zeta_j|, 1\leq i < j \leq n$$
\end{proof}


\subsection{Linear maps $V^{\otimes n} \to \widetilde{W}_{z_1...z_n}$ composable with vertex operators}

\begin{defn}\label{Composable}
Let $\Phi: V^{\otimes n} \to \widetilde{W}_{z_1, ..., z_n}$ be a linear map. Let $m\in \Z_+$. $\Phi$ is said to be composable with $m$ vertex operators if for every $\alpha_0, \alpha_1, ..., \alpha_n\in \Z_+$ such that $\alpha_0+\cdots + \alpha_n = m+n$, every $l_0= 0, ..., \alpha_0$, and every $v_{1}^{(0)}, ..., v_{\alpha_0}^{(0)}, ..., v_1^{(n)}, ..., v_{\alpha_n}^{(n)}\in V$, the series of $\overline{W}$-valued rational functions
\begin{align*}
& \begin{aligned} 
Y_W^L(u_1^{(0)}, z_1^{(0)})& \cdots Y_W^L(u_{l_0}^{(0)}, z_{l_0}^{(0)})Y_W^{s(R)}(u_{l_0+1}^{(0)}, z_{l_0+1}^{(0)}) \cdots Y_W^{s(R)}(u_{\alpha_0}^{(0)}, z_{\alpha_0}^{(0)})\\
\cdot \Phi(& Y_V(v_1^{(1)}, z_1^{(1)}-\zeta_1)\cdots Y_V(v_{\alpha_1}^{(1)}, z_{\alpha_1}^{(1)}-\zeta_1)\one \\
\otimes & \cdots \\
\otimes & Y_V(v_1^{(n)}, z_1^{(n)}-\zeta_n)\cdots Y_V(v_{\alpha_n}^{(n)}, z_{\alpha_n}^{(n)}-\zeta_n)\one)(\zeta_1, ..., \zeta_n)
\end{aligned}\\
& \begin{aligned}
= \sum_{\substack{k_1^{(0)}, ..., k_{\alpha_0}^{(0)},\\ ..., k_1^{(n)}, ..., k_{\alpha_n}^{(n)}\in \Z}}(Y_W^L)&_{k_1^{(0)}}(u_1^{(0)})\cdots (Y_W^L)_{k_{l_0}^{(0)}}(u_{l_0}^{(0)})(Y_W^{s(R)})_{k_{l_0+1}^{(0)}}(u_{l_0+1}^{(0)})\cdots (Y_W^{s(R)})_{k_{\alpha_0}^{(0)}}(u_{\alpha_0}^{(0)})\\
\cdot \Phi(&(Y_V)_{k_1^{(1)}}(u_1^{(1)}) \cdots (Y_V)_{k_{\alpha_1}^{(1)}}(u_{\alpha_1}^{(1)})\one \\
\otimes & (Y_V)_{k_1^{(2)}}(u_1^{(2)}) \cdots (Y_V)_{k_{\alpha_2}^{(2)}}(u_{\alpha_2}^{(2)})\one  \\
\otimes & \cdots \\
\otimes & (Y_V)_{k_1^{(n)}}(u_1^{(n)}) \cdots (Y_V)_{k_{\alpha_n}^{(n)}}(u_{\alpha_n}^{(n)})\one )(\zeta_1, ..., \zeta_n)\rangle \\
& \prod_{i=1}^{\alpha_0}(z_i^{(0)})^{-k_i^{(0)}-1}\prod_{i=1}^n \prod_{j=1}^{\alpha_i}(z_j^{(i)}-\zeta_i)^{-k_j^{(i)}-1}
\end{aligned}
\end{align*}
converges absolutely when 
\begin{align*}
& |z_1^{(0)}|> \cdots > |z_{\alpha_0}^{(0)}| > |\zeta_i|+|z_1^{(i)}-\zeta_i|, i = 1, ..., n;\\
& |z_1^{(i)}-\zeta_i|>\cdots > |z_{\alpha_i}^{(i)}-\zeta_i|, i = 1, ..., n; \\
& |z_s^{(i)}-\zeta_i - z_t^{(j)}+\zeta_j|<|\zeta_i-\zeta_j|, 1\leq i < j \leq	n, 1\leq s \leq \alpha_i, 1\leq t \leq \alpha_j.
\end{align*}
and the sum can be analytically extended to a rational function in $z_1^{(1)}, ..., z_{\alpha_1}^{(1)}, ..., z_1^{(n)}, ..., z_{\alpha_n}^{(n)}$ that is independent of $\zeta_1, ..., \zeta_n$ and has the only possible poles at $z_s^{(i)}= z_t^{(j)}$, for $1\leq i < j \leq n, s=1 ,..., \alpha_i, t = 1, ..., \alpha_j$. We require in addition that for each $i, j, s, t$, the order of the pole $z_s^{(i)}=z_t^{(j)}$ is bounded above by a constant that depends only on $u_s^{(i)}$ and $u_t^{(j)}$. 
\end{defn}

\begin{defn}
We denote by $\hat{C}_m^n(V, W)$ the set of linear maps $V^{\otimes n} \to \widetilde{W}_{z_1, ..., z_n}$ in $\hat{C}_0^n(V, W)$ and are composable with $m$ vertex operators. It is easy to see that
$$\hat{C}_0^n(V, W) \supseteq \hat{C}_1^n(V, W) \supseteq \hat{C}_2^n(V, W) \supseteq \cdots  $$
We denote by $\hat{C}_\infty^n(V, W)$ the intersection of all $\hat{C}_m^n(V, W)$ for $m=0, 1, 2,...$. 
\end{defn}

\begin{exam}
Fix $n\in \Z_+$ and $l = 0, ..., n$. For $w\in W$ satisfying $\forall u\in V, Y_W^L(u, x)w\in W[[x]]$ and $Y_W^R(w, x)u\in W[[x]]$, the map $E_{W;w}^{(l, n-l)}$ is an element in $\hat{C}_\infty^{n}(V, W)$. 
\end{exam}

\begin{proof}
This was proved in Proposition \ref{ComposableExample}. 
\end{proof}

\begin{rema}
If $V$ is a grading-restricted vertex algebra and $W$ is a grading-restricted $V$-module, then $W$ can be viewed as a $V$-bimodule when we regard $V$ as a MOSVA. One can check easily that Definition \ref{Composable} is equivalent to Definition 3.5 in \cite{Hcoh}. The sets $\hat{C}_m^n(V, W)$ we are defining here is precisely the same as the $\hat{C}^n_m (V, W)$ in \cite{Hcoh}.  
\end{rema}

\begin{rema}
Let $\Phi \in \hat{C}_1^n(V, W)$. Then Definition \ref{Composable} implies the following
\begin{enumerate}
\item For every $\beta = 1, ..., n$, $v^{(1)}, ..., v_1^{(\beta)}, v_2^{(\beta)}, ..., v^{(n)}\in V$, the series
$$\Phi(Y_V(v^{(1)}, z^{(1)}-\zeta_1)\one \otimes \cdots \otimes Y_V(v_1^{(\beta)}, z_1^{(\beta)}-\zeta_\beta)Y_V(v_2^{(\beta)}, z_2^{(\beta)} - \zeta_\beta)\one \otimes \cdots \otimes Y_V(v^{(n)}, z^{(n)}-\zeta_n)\one)(\zeta_1, ..., \zeta_n) $$
which expands as 
\begin{align*}\sum_{k^{(1)}, ..., k_1^{(\beta)}, k_2^{(\beta)}, ..., k^{(n)}\in \Z}\Phi((Y_V)_{k^{(1)}}(v^{(1)})\one \otimes \cdots \otimes (Y_V)_{k_1^{(\beta)}}(v_1^{(\beta)})(Y_V)_{k_2^{(\beta)}}\one \otimes \cdots \otimes (Y_V)_{k^{(n)}}(v^{(n)})\one)(\zeta_1, ..., \zeta_n)\\
\cdot (z^{(1)}-\zeta_1)^{-k^{(1)}-1}\cdots (z_1^{(\beta)} - \zeta_\beta)^{-k_1^{(\beta)}-1}(z_2^{(\beta)} - \zeta_\beta)^{-k_2^{(\beta)} -1}\cdots (z^{(n)}-\zeta_n)^{-k^{(n)}-1}
\end{align*}
converges absolutely when
\begin{align*}
& |z_1^{(\beta)}-\zeta_\beta| > |z_2^{(\beta)}-\zeta_\beta|, |\zeta_\beta - \zeta_i| > |z_s^{(\beta)}-\zeta_\beta| + |z^{(i)} - \zeta_i|, s = 1, 2, 1\leq i \leq n, i\neq \beta. \\
& |\zeta_i- \zeta_j| > |z^{(i)}- \zeta_i| + |z^{(j)}-\zeta_j|, 
1\leq i < j \leq n, i, j \neq \beta, 
\end{align*}
to a rational function that depends only on $z^{(1)}, ..., z_1^{(i)}, z_2^{(i)}, ..., z^{(n)}$, with the only possible poles at $z_1^{(\beta)} = z_2^{(\beta)}; z_s^{(\beta)} = z^{(i)}, s=1, 2, i =1, ..., \beta-1, \beta+1, ..., n; z^{(i)}=z^{(j)}, 1\leq i < j \leq n, i, j\neq \beta$. 
From arguments similar to those in Proposition \ref{0comp}, this is equivalent to say that the following series 
\begin{align*}
 \Phi(v^{(1)}\otimes \cdots \otimes Y_V(v_1^{(\beta)}, z_1^{(\beta)}-\zeta_\beta)Y_V(v_2^{(\beta)}, z_2^{(\beta)}-\zeta_\beta)\one \otimes \cdots \otimes v^{(n)})(z^{(1)}, ..., \zeta_\beta ,..., z^{(n)})\\
 \begin{aligned}
= \sum_{k_1^{(\beta)}, k_2^{(\beta)}\in \Z}\Phi(v^{(1)} \otimes \cdots \otimes (Y_V)_{k_1^{(\beta)}}(v_1^{(\beta)})(Y_V)_{k_2^{(\beta)}}\one \otimes \cdots \otimes v^{(n)})(z^{(1)}, ..., \zeta_\beta, ..., z^{(n)})\\
(z_1^{(\beta)} - \zeta_\beta)^{-k_1^{(\beta)}-1}(z_2^{(\beta)} - \zeta_\beta)^{-k_2^{(\beta)} -1}
\end{aligned}
\end{align*}
of $\overline{W}$-valued rational functions converges absolutely when 
$$|z_1^{(\beta)}-\zeta_\beta| > |z_2^{(\beta)}-\zeta_\beta|, |z^{(i)}-z_s^{(\beta)}| > |z_s^{\beta} - \zeta_\beta|, i=1, ..., \beta-1, \beta+1, ..., n, s = 1, 2. $$
\item For $u_1, ..., u_{n+1}\in V$, the series
\begin{align*}
& Y_W^L(u_1, z_1)\Phi(u_2\otimes \cdots \otimes u_{n+1})(z_2, ..., z_{n+1})\\
= & \sum_{k\in \Z} (Y_W^L)_{k}(u_1)\Phi(u_2\otimes \cdots \otimes u_{n+1})(z_2, ..., z_{n+1}) z_1^{-k-1}
\end{align*}
of $\overline{W}$-elements converges absolutely when $|z_1|> |z_i|>0, i = 2, ..., n+1$ to an $\overline{W}$-valued rational function (here the operator $(Y_W^L)_k(u_1)$ is extended to $\overline{W} \to \overline{W}$.) 
\item For $u_1, ..., u_{n+1}\in V$, the series
\begin{align*}
& Y_W^{s(R)}(u_{n+1}, z_{n+1})\Phi(u_1\otimes \cdots \otimes u_{n})(z_1, ..., z_{n})\\
= & \sum_{k\in \Z} (Y_W^{s(R)})_{k}(u_{n+1})\Phi(u_1\otimes \cdots \otimes u_{n})(z_1, ..., z_{n}) z_{n+1}^{-k-1}
\end{align*}
of $\overline{W}$-elements converges absolutely when $|z_{n+1}|> |z_i|>0, i = 1, ..., n$ to an $\overline{W}$-valued rational function (here the operator $(Y_W^{s(R)})_k(u_{n+1})$ is extended to $\overline{W} \to \overline{W}$.) 
\end{enumerate}

\end{rema}

\begin{defn}
Let $\Phi \in \hat{C}_m^n(V, W)$. 
\begin{enumerate}
\item For every $i=1, ..., n$, we define the map  
$$\Phi\circ_i E_V^{(2)}: V^{\otimes (n+1)}\to \widetilde{W}_{z_1, ..., z_{n+1}}$$
by setting  
$$(\Phi\circ_i E_V^{(2)})(v_1\otimes \cdots \otimes v_{n+1})$$
to be the $\overline{W}$-valued rational function
$$E(\Phi(v_1 \otimes \cdots \otimes Y_V(v_i, z_i-\zeta) Y_V(v_{i+1}, z_{i+1}-\zeta)\one \otimes v_{i+2} \otimes \cdots \otimes v_{n+1})(z_1, ..., z_{i-1}, \zeta, z_{i+2}, ..., z_{n+1}))$$
\item We define the map
$$E_W^{(1, 0)}\circ_{2}\Phi:  V^{\otimes (n+1)}\to \widetilde{W}_{z_1, ..., z_{n+1}}$$
by setting  
$$(E_W^{(1, 0)}\circ_{2}\Phi)(v_1\otimes \cdots \otimes v_{n+1})$$
to be the $\overline{W}$-valued rational function
$$E(Y_W^L(v_1, z_1)\Phi(v_{2}\otimes \cdots \otimes v_{n+1})(z_{2}, ..., z_{n+1}))$$
\item We define the map
$$E_W^{(0, 1)}\circ_{2}\Phi:  V^{\otimes (n+1)}\to \widetilde{W}_{z_1, ..., z_{n+1}}$$
by setting  
$$(E_W^{(0, 1)}\circ_{2}\Phi)(v_1\otimes \cdots \otimes v_{n+1})$$
to be the $\overline{W}$-valued rational function 
$$E(Y_W^{s(R)}(v_{n+1}, z_{n+1})\Phi(v_{1}\otimes \cdots \otimes v_{n})(z_{1}, ..., z_{n}))$$
\end{enumerate}
\end{defn}

\begin{prop}\label{deltasummands}
The maps $\Phi\circ_i E_V^{(2)}, E_W^{(1, 0)}\circ_2 \Phi$ and $E_W^{(0, 1)}\circ_2 \Phi$ are elements of $\hat{C}^{n+1}_{m-1}(V, W)$
\end{prop}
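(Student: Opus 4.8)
The plan is to check, for each of the three maps, the three conditions defining membership in $\hat{C}^{n+1}_{m-1}(V,W)$: that the map takes values in $\widetilde{W}_{z_1,\dots,z_{n+1}}$, that it has the $D$-derivative and $\d$-conjugation properties (so it lies in $\hat{C}_0^{n+1}(V,W)$), and that it is composable with $m-1$ vertex operators. I would treat $E_W^{(1,0)}\circ_2\Phi$ in full; the map $E_W^{(0,1)}\circ_2\Phi$ is entirely symmetric, replacing the front $Y_W^L$ by a back $Y_W^{s(R)}$ and using part (3) of Theorem \ref{ConvRef} in place of part (2), while $\Phi\circ_i E_V^{(2)}$ is handled by the same mechanism applied to an internal slot. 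Note that $m\ge 1$ here, so $\Phi\in\hat{C}^n_m\subseteq\hat{C}^n_1$.

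\textbf{Well-definedness and the first two properties.} For $E_W^{(1,0)}\circ_2\Phi$, the statement that $Y_W^L(v_1,z_1)\Phi(v_2\otimes\cdots\otimes v_{n+1})(z_2,\dots,z_{n+1})$ converges for $|z_1|>|z_i|$ to a $\overline{W}$-valued rational function with poles only at coinciding arguments is exactly item (2) of the Remark following the definition of $\hat{C}_m^n$; hence the $E$-notation is meaningful and the output lies in $\widetilde{W}_{z_1,\dots,z_{n+1}}$. The $D$-derivative and $\d$-conjugation properties then follow by direct computation: applying $D_V$ (resp. $z^{\d_V}$) to the first slot is absorbed by the $D$-derivative / $\d$-commutator formula for the left module $Y_W^L$, whereas applying it to any later slot is absorbed by the corresponding property of $\Phi$; the $D_W$- and $z^{\d_W}$-statements combine the left $D$- and $\d$-commutator formulas with the properties of $\Phi$.

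\textbf{Composability.} This is the substance. To prove $E_W^{(1,0)}\circ_2\Phi$ composable with $m-1$ vertex operators, fix $\beta_0+\beta_1+\cdots+\beta_{n+1}=(m-1)+(n+1)=m+n$ and write out the composability series of Definition \ref{Composable}. Since $(E_W^{(1,0)}\circ_2\Phi)(\cdots)=E(Y_W^L(\text{slot-}1\text{ input},\zeta_1)\,\Phi(\text{slots }2,\dots,n+1))$, the front factor that appears is $Y_W^L(Y_V(v_1^{(1)},z_1^{(1)}-\zeta_1)\cdots Y_V(v_{\beta_1}^{(1)},z_{\beta_1}^{(1)}-\zeta_1)\one,\zeta_1)$. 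By Theorem \ref{ConvRef}(2) this factor, acting on the $\overline{W}$-valued rational function produced by $\Phi$, coincides (as an $E$-object) with the product $Y_W^L(v_1^{(1)},z_1^{(1)})\cdots Y_W^L(v_{\beta_1}^{(1)},z_{\beta_1}^{(1)})$ of $\beta_1$ genuine left operators. Together with the $\beta_0$ outer operators this exhibits $\Phi$ composed with $\beta_0+\beta_1$ front operators and slot data $\beta_2,\dots,\beta_{n+1}$, whose total $\ (\beta_0+\beta_1)+\beta_2+\cdots+\beta_{n+1}=m+n\ $ is precisely the count for composability of $\Phi$ with $m$ vertex operators. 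When the outer operators contain $Y_W^{s(R)}$ factors (the case $l_0<\beta_0$), I first invoke the commutativity of Theorem \ref{ConvRef}(1) to move the $\beta_1$ new $Y_W^L$'s next to the outer $Y_W^L$'s, restoring the $L$-before-$R$ form required by Definition \ref{Composable}. The hypothesis $\Phi\in\hat{C}^n_m$ then supplies absolute convergence, the $\zeta$-independent rational extension, and the pole-order bounds (each pole $z_s^{(i)}=z_t^{(j)}$ bounded in terms of its two entries), all of which transfer verbatim. For $\Phi\circ_i E_V^{(2)}$ the same reduction is carried out internally: the merged input $Y_V(v_i,z_i-\zeta)Y_V(v_{i+1},z_{i+1}-\zeta)\one$, once the slots carrying $\beta_i$ and $\beta_{i+1}$ operators are attached, combines by associativity in $V$ into a single $\Phi$-slot with $\beta_i+\beta_{i+1}$ operators, again realizing composability of $\Phi$ with $m$ vertex operators.

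\textbf{The main obstacle.} The genuine difficulty is not the combinatorial count but the bookkeeping of the several iterated series and their domains of convergence: one must verify that merging the $Y_V$-strings (or splitting the composed $Y_W^L$) is legitimate on a nonempty common region and that analytic continuation identifies the resulting $E$-objects, so that the single $\zeta$-independent rational function with the prescribed pole set is recovered and the stated region of convergence is obtained by intersecting the regions of Proposition \ref{ComposableExample} and Theorem \ref{ConvRef}. I would organize this exactly as in Propositions \ref{ComposableExample-1}--\ref{ComposableExample}: pair with an arbitrary $w'\in W'$, reduce every assertion to the expansion of one scalar rational function in a specified region, and invoke Lemma 4.5 and Lemma 4.7 of \cite{Q-Rep} to pass between expansions. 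The only step requiring real care is the reordering via Theorem \ref{ConvRef}(1), which changes the region of convergence while preserving the limiting rational function; tracking which region is in force at each stage is where the argument is most delicate.
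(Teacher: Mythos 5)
Your proposal is correct and takes essentially the same approach as the paper: both reduce composability of each of the three maps to the composability of $\Phi$ with $m$ vertex operators by fusing or splitting operator strings via Theorem \ref{ConvRef} (parts (2)--(3) for associativity, part (1) for reordering past the $Y_W^{s(R)}$ factors) and associativity in $V$ for the internal slot, with convergence regions tracked by pairing with $w'$ and matching expansions of a single rational function. The only differences are cosmetic: you run the chain of identities from the new map's composability series back to that of $\Phi$, whereas the paper runs it forward, and you additionally record the routine $D$-derivative and $\d$-conjugation checks that the paper leaves implicit.
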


\begin{proof}
Let $\alpha_0, ..., \alpha_{n+1}\in \N$ such that $\alpha_0 + \cdots + \alpha_{n+1} = m+n$. Let $l_0 = 0, ..., \alpha_0$. Take $v_s^{(j)}\in V, j = 0, 1, , ..., n+1, s = 1, ..., \alpha_j$. 
\begin{enumerate}
\item For the first conclusion, we first note that the associativity implies that
\begin{align*}
& Y_V(v_1^{(i)}, z_1^{(i)}-\zeta)\cdots Y_V(v_{\alpha_i}^{(i)}, z_{\alpha_i}^{(i)}-\zeta)Y_V(v_1^{(i+1)}, z_1^{(i+1)}-\zeta)\cdots Y_V(v_{\alpha_{i+1}}^{(i+1)}, z_{\alpha_{i+1}}^{(i+1)}-\zeta)\one \\
= & Y_V(Y_V(v_1^{(i)}, z_1^{(i)}-\zeta_i)\cdots Y_V(v_{\alpha_i}^{(i)}, z_{\alpha_i}^{(i)}-\zeta_i)\one, \zeta_i -\zeta)\\
& \qquad \cdot Y_V(Y_V(v_1^{(i+1)}, z_1^{(i+1)}-\zeta_{i+1})\cdots Y_V(v_{\alpha_{i+1}}^{(i+1)}, z_{\alpha_{i+1}}^{(i+1)}-\zeta_{i+1})\one, \zeta_{i+1}-\zeta)\one
\end{align*}
when 
\begin{align*}
&|z_1^{(i)}-\zeta| > \cdots > |z_{\alpha_i}^{(i)}-\zeta| > |z_1^{(i+1)}-\zeta| > \cdots > |z_{\alpha_{i+1}}^{(i+1)}-\zeta|;\\
&|z_s^{(i)}-\zeta_i|>|z_t^{(i)}-\zeta_i|, 1\leq s < t\leq \alpha_i; 
|z_s^{(i+1)}-\zeta_{i+1}|>|z_t^{(i+1)}-\zeta_{i+1}|, 1\leq s < t\leq \alpha_{i+1};\\
&|\zeta_i - \zeta| > |\zeta_{i+1} - \zeta| + |z_1^{(i)}- \zeta_i|+ |z_1^{(i+1)} - \zeta_{i+1}|.
\end{align*}
Let $A$ denote the region in $\C^n$ such that the coordinates satisfies the inequalities above. We also note that since $\Phi\in \hat{C}_m^n(V, W)$, the series 
\begin{align}
Y_W^L(u_1^{(0)},& z_1^{(0)}) \cdots Y_W^L(u_{l_0}^{(0)}, z_{l_0}^{(0)})Y_W^{s(R)}(u_{l_0+1}^{(0)}, z_{l_0+1}^{(0)}) \cdots Y_W^{s(R)}(u_{\alpha_0}^{(0)}, z_{\alpha_0}^{(0)}) \nn \\
\cdot \Phi(& Y_V(v_1^{(1)}, z_1^{(1)}-\zeta_1)\cdots Y_V(v_{\alpha_1}^{(1)}, z_{\alpha_1}^{(1)}-\zeta_1)\one \nn\\
\otimes & \cdots \nn\\
\otimes & Y_V(v_1^{(i)}, z_1^{(i)}-\zeta)\cdots Y_V(v_{\alpha_i}^{(i)}, z_{\alpha_i}^{(i)}-\zeta)Y_V(v_1^{(i+1)}, z_1^{(i+1)}-\zeta)\cdots Y_V(v_{\alpha_{i+1}}^{(i+1)}, z_{\alpha_{i+1}}^{(i+1)}-\zeta)\one  \nn\\
\otimes & \cdots \nn\\
\otimes & Y_V(v_1^{(n+1)}, z_1^{(n+1)}-\zeta_{n+1})\cdots Y_V(v_{\alpha_{n+1}}^{(n+1)}, z_{\alpha_{n+1}}^{(n+1)}-\zeta_{n+1})\one)(\zeta_1, ..., \zeta_{i-1}, \zeta, 
\zeta_{i+2}, ...,  \zeta_{n+1}) \label{Arg-Series-1}
\end{align}
converges absolutely to the $\overline{W}$-valued rational function 
\begin{align} 
E(Y_W^L&(u_1^{(0)}, z_1^{(0)}) \cdots Y_W^L(u_{l_0}^{(0)}, z_{l_0}^{(0)})Y_W^{s(R)}(u_{l_0+1}^{(0)}, z_{l_0+1}^{(0)}) \cdots Y_W^{s(R)}(u_{\alpha_0}^{(0)}, z_{\alpha_0}^{(0)})\nn \\
\cdot \Phi(& Y_V(v_1^{(1)}, z_1^{(1)}-\zeta_1)\cdots Y_V(v_{\alpha_1}^{(1)}, z_{\alpha_1}^{(1)}-\zeta_1)\one \nn\\
\otimes & \cdots \nn\\
\otimes & Y_V(v_1^{(i)}, z_1^{(i)}-\zeta)\cdots Y_V(v_{\alpha_i}^{(i)}, z_{\alpha_i}^{(i)}-\zeta)Y_V(v_1^{(i+1)}, z_1^{(i+1)}-\zeta)\cdots Y_V(v_{\alpha_{i+1}}^{(i+1)}, z_{\alpha_{i+1}}^{(i+1)}-\zeta)\one \nn\\
\otimes & \cdots \nn\\
\otimes & Y_V(v_1^{(n+1)}, z_1^{(n+1)}-\zeta_{n+1})\cdots Y_V(v_{\alpha_{n+1}}^{(n+1)}, z_{\alpha_{n+1}}^{(n+1)}-\zeta_{n+1})\one)(\zeta_1, ..., \zeta_{i-1}, \zeta, \zeta_{i+2}, ..., \zeta_{n+1}))\label{Arg-Series-1-Limit}
\end{align}
that does not depend on $\zeta_1, ..., \zeta_{i-1}, \zeta,\zeta_{i+2} ..., \zeta_{n+1}$ in a region $B$ in $\C^n$. The region $B$ is defined by the following inequalities: 
\begin{align*}
& |z_1^{(0)}|> \cdots > |z_{\alpha_0}^{(0)}| > |\zeta_j|+|z_t^{(j)}-\zeta_j|, j = 1, ..., i-1, i+2, ...,  n, t = 1, ..., \alpha_j;\\
& |z_{\alpha_0}^{(0)}| > |\zeta|+|z_t^{(j)}-\zeta| , j = i, i+1; \\
& |z_1^{(j)}-\zeta_j|>\cdots > |z_{\alpha_j}^{(j)}-\zeta_j|, j = 1, ..., i-1, i+2, ..., n; \\
& |z_1^{(i)}- \zeta|> \cdots > |z_{\alpha_i}^{(i)} - \zeta| > |z_1^{(i+1)} - \zeta| > \cdots > |z_{\alpha_{i+1}}^{(i+1)}- \zeta|;\\
& |z_s^{(j)}-\zeta_j - z_t^{(k)}+\zeta_k|<|\zeta_j-\zeta_k|, 1\leq j < k \leq n, j \neq i, j\neq i+1, k \neq i, k\neq i+1, \\
& \hspace{5.5cm} 1\leq s \leq \alpha_j, 1\leq t \leq \alpha_k;\\
& |z_s^{(j)}-\zeta - z_t^{(k)}+\zeta_k|<|\zeta-\zeta_k|, i \leq j \leq i+1, 1\leq k \leq n, k \neq i, k\neq i+1, \\
& \hspace{5.2cm} 1\leq s \leq \alpha_j, 1\leq t \leq \alpha_k. 
\end{align*}
Let $f$ denote the $\overline{W}$-valued rational function defined by (\ref{Arg-Series-1-Limit}). Note that neither the series (\ref{Arg-Series-1}) nor the sum (\ref{Arg-Series-1-Limit}) involves $\zeta_i, \zeta_{i+1}$. 

It is easy to check that $A\cap B$ is nonempty. So in $A\cap B$, the following series  
\begin{align}
Y_W^L(u_1^{(0)}, z_1^{(0)})& \cdots Y_W^L(u_{l_0}^{(0)}, z_{l_0}^{(0)})Y_W^{s(R)}(u_{l_0+1}^{(0)}, z_{l_0+1}^{(0)}) \cdots Y_W^{s(R)}(u_{\alpha_0}^{(0)}, z_{\alpha_0}^{(0)})\nn\\
\cdot \Phi(& Y_V(v_1^{(1)}, z_1^{(1)}-\zeta_1)\cdots Y_V(v_{\alpha_1}^{(1)}, z_{\alpha_1}^{(1)}-\zeta_1)\one \nn\\
\otimes & \cdots \nn\\
\otimes & Y_V(Y_V(v_1^{(i)}, z_1^{(i)}-\zeta_i)\cdots Y_V(v_{\alpha_i}^{(i)}, z_{\alpha_i}^{(i)}-\zeta_i)\one, \zeta_i -\zeta)\nn\\
& \qquad \cdot Y_V(Y_V(v_1^{(i+1)}, z_1^{(i+1)}-\zeta_{i+1})\cdots Y_V(v_{\alpha_{i+1}}^{(i+1)}, z_{\alpha_{i+1}}^{(i+1)}-\zeta_{i+1})\one, \zeta_{i+1}-\zeta)\one\nn\\
\otimes & \cdots \nn\\
\otimes & Y_V(v_1^{(n)}, z_1^{(n)}-\zeta_n)\cdots Y_V(v_{\alpha_n}^{(n)}, z_{\alpha_n}^{(n)}-\zeta_n)\one)(\zeta_1, ...,\zeta_{i-1}, \zeta, \zeta_{i+2}..., \zeta_n)\label{Arg-Series-2}
\end{align}
converges absolutely to the $\overline{W}$-valued rational function $f$. We analyze the series and find that it is the expansion of $f$ in the following way:
\begin{enumerate}
    \item For $j=0, 1, ..., n, 1\leq s < t\leq \alpha_0$, expand the negative powers of $z_s^{(0)}-z_t^{(j)}$ as power series in $z_t^{(j)}$. When $j\geq 1$ and $j\neq i, j\neq i+1$, further expand positive powers of $z_t^{(j)} = \zeta_j + (z_t^{(j)} - \zeta_j)$ as polynomials in $\zeta_j$ and $z_t^{(j)}-\zeta_j$. When $j=i$ or $j=i+1$, further expand positive powers of $z_t^{(j)} = \zeta + (\zeta_j-\zeta) + (z_t^{(j)} - \zeta_j)$ as polynomials in $\zeta, \zeta_j-\zeta$ and $z_t^{(j)}-\zeta_j$.
    \item For $j = 1,..., n, 1\leq s< t\leq \alpha_j$, expand negative powers of $z_s^{(j)}-z_t^{(j)}=(z_s^{(j)} - \zeta_j) - (z_t^{(j)}- \zeta_j)$ as power series in $z_t^{(j)}-\zeta_j$. 
    \item For $1\leq j < k \leq n, j,k \notin\{i, i+1\}, 1\leq s \leq \alpha_j, 1\leq t \leq \alpha_k$, expand the negative powers of $z_s^{(j)}-z_t^{(k)} = (\zeta_j-\zeta_k) + (z_s^{(j)} - \zeta_j - z_t^{(k)} + \zeta_k)$ as a power series of $(z_s^{(j)} - \zeta_j - z_t^{(k)} + \zeta_k)$, then further expand the positive powers of $(z_s^{(j)} - \zeta_j - z_t^{(k)} + \zeta_k)$ as polynomials of $(z_s^{(j)} - \zeta_j)$ and  $(z_t^{(k)} - \zeta_k)$. 
    \item For $j=1,..., i-1, i+2, ..., n, k = i, i+1, 1\leq s \leq \alpha_j, 1\leq t \leq \alpha_k$, expand the negative powers of $z_s^{(j)}-z_t^{(k)} = (\zeta_j-\zeta) + (z_s^{(j)} - \zeta_j - z_t^{(k)} + \zeta_k + \zeta- \zeta_k)$ as a power series of $(z_s^{(j)} - \zeta_j - z_t^{(k)} + \zeta_k + \zeta- \zeta_k)$, then further expand the positive powers of $(z_s^{(j)} - \zeta_j - z_t^{(k)} + \zeta_k + \zeta- \zeta_k)$ as polynomials of $(z_s^{(j)} - \zeta_j)$, $(z_t^{(k)} - \zeta_k)$ and $(\zeta - \zeta_k)$. 
    \item For $j=i, k=i+1, 1\leq s \leq \alpha_j, 1\leq t \leq \alpha_k$, expand the negative powers of $z_s^{(i)} - z_t^{(i+1)} = (\zeta_{i}-\zeta) + (\zeta - \zeta_{i+1} + z_s^{(i)} - \zeta_i - z_t^{(i+1)} + \zeta_{i+1})$. then further expand the positive powers of $ (\zeta - \zeta_{i+1} + z_s^{(i)} - \zeta_i - z_t^{(i+1)} + \zeta_{i+1})$ as polynomials of $ (\zeta - \zeta_{i+1})$, $(z_s^{(i)} - \zeta_i)$ and $(z_t^{(i+1)} - \zeta_{i+1})$. 
\end{enumerate}
So the series (\ref{Arg-Series-2}) converges absolutely to (\ref{Arg-Series-1-Limit}) in the region $C$ defined by
\begin{align*}
& |z_1^{(0)}|> \cdots > |z_{\alpha_0}^{(0)}| > |\zeta_j|+|z_1^{(j)}-\zeta_j|, j = 1, ..., i-1, i+2, ...,  n;\\
& |z_{\alpha_0}^{(0)}| > |\zeta|+|\zeta_j - \zeta|+ |z_1^{(j)}-\zeta_j| , j = i, i+1; \\
& |z_1^{(j)}-\zeta_j|>\cdots > |z_{\alpha_j}^{(j)}-\zeta_j|, j = 1, ..., n; \\
& |z_s^{(j)}-\zeta_j - z_t^{(k)}+\zeta_k|<|\zeta_j-\zeta_k|, 1\leq j < k \leq n, j \neq i, j\neq i+1, k \neq i, k\neq i+1; \\
& \hspace{5.5cm} 1\leq s \leq \alpha_j, 1\leq t \leq \alpha_k;\\
& |z_s^{(j)} - \zeta_j - z_t^{(k)} + \zeta| < |\zeta- \zeta_j|, 1\leq j\leq n, j\neq i, j\neq i+1, i\leq k \leq i+1,\\
& \hspace{5.2cm} 1\leq s \leq \alpha_j, 1\leq t \leq \alpha_k;\\
& |\zeta+z_s^{(i)} - \zeta_i - z_t^{(i+1)}| < |\zeta_i-\zeta|. 
\end{align*}
Now we note that by definition of $\Phi\circ_i E_V^{(2)}$, 
\begin{align*}
(\Phi  \circ_i E_V^{(2)})&( Y_V(v_1^{(1)}, z_1^{(1)}-\zeta_1)\cdots Y_V(v_{\alpha_1}^{(1)}, z_{\alpha_1}^{(1)}-\zeta_1)\one \\
\otimes & \cdots \\
\otimes & Y_V(v_1^{(i)}, z_1^{(i)}-\zeta_i)\cdots Y_V(v_{\alpha_i}^{(i)}, z_{\alpha_i}^{(i)}-\zeta_i)\one \\
\otimes &  Y_V(v_1^{(i+1)}, z_1^{(i+1)}-\zeta_{i+1})\cdots Y_V(v_{\alpha_{i+1}}^{(i+1)}, z_{\alpha_{i+1}}^{(i+1)}-\zeta_{i+1})\one\\
\otimes & \cdots \\
\otimes & Y_V(v_1^{(n+1)}, z_1^{(n+1)}-\zeta_{n+1})\cdots Y_V(v_{\alpha_{n+1}}^{(n+1)}, z_{\alpha_{n+1}}^{(n+1)}-\zeta_{n+1})\one)(\zeta_1, ..., \zeta_{n+1})
\end{align*}
is the sum of the series 
\begin{align*}
\Phi(& Y_V(v_1^{(1)}, z_1^{(1)}-\zeta_1)\cdots Y_V(v_{\alpha_1}^{(1)}, z_{\alpha_1}^{(1)}-\zeta_1)\one \\
\otimes & \cdots \\
\otimes & Y_V(Y_V(v_1^{(i)}, z_1^{(i)}-\zeta_i)\cdots Y_V(v_{\alpha_i}^{(i)}, z_{\alpha_i}^{(i)}-\zeta_i)\one, \zeta_i -\zeta)\\
& \qquad \cdot Y_V(Y_V(v_1^{(i+1)}, z_1^{(i+1)}-\zeta_{i+1})\cdots Y_V(v_{\alpha_{i+1}}^{(i+1)}, z_{\alpha_{i+1}}^{(i+1)}-\zeta_{i+1})\one, \zeta_{i+1}-\zeta)\one\\
\otimes & \cdots \\
\otimes & Y_V(v_1^{(n)}, z_1^{(n)}-\zeta_n)\cdots Y_V(v_{\alpha_n}^{(n)}, z_{\alpha_n}^{(n)}-\zeta_n)\one)(\zeta_1,..., \zeta, ..., \zeta_n)
\end{align*}
with respect to the variables involving $\zeta$. Therefore, the series 
\begin{align}
& \begin{aligned}
Y_W^L(u_1^{(0)} , z_1^{(0)})\cdots &Y_W^L(u_{l_0}^{(0)}, z_{l_0}^{(0)})Y_W^{s(R)}(u_{l_0+1}^{(0)}, z_{l_0+1}^{(0)})\cdots Y_W^{s(R)}(u_{\alpha_0}^{(0)}, z_{\alpha_0}^{(0)})\\
\cdot (\Phi \circ_i E_V^{(2)})&(Y_V(v_1^{(1)}, z_1^{(1)}-\zeta_1)\cdots Y_V(v_{\alpha_1}^{(1)}, z_{\alpha_1}^{(1)}-\zeta_1)\one \\ 
 &\otimes \cdots \otimes Y_V(v_1^{(n+1)}, z_1^{(n+1)}-\zeta_{n+1})\cdots Y_V(v_{\alpha_{n+1}}^{(1)}, z_{\alpha_{n+1}}^{(n+1)}-\zeta_{n+1})\one )(\zeta_1, ..., \zeta_{n+1})
\end{aligned}\nn\\
& \begin{aligned} =\sum_{\substack{k_1^{(0)}, ..., k_{\alpha_0}^{(0)}\\ ... k_1^{(n+1)}, ..., k_{\alpha_{n+1}}^{(n+1)} \in \Z } } (Y_W^L)_{k_1^{(0)}}(u_1^{(0)}) & \cdots (Y_W^L)_{k_{l_0}^{(0)}}(u_{l_0}^{(0)}) (Y_W^{s(R)})_{k_{l_0+1}}(u_{l_0+1}^{(0)}) \cdots (Y_W^{s(R)})_{k_{\alpha_0}}(u_{\alpha_0}^{(0)})\\ 
\cdot (\Phi\circ_i E_V^{(2)})(& (Y_V)_{k_{1}^{(1)}}(v_{1}^{(1)})\cdots (Y_V)_{k_{\alpha_1}^{(1)}}(v_{\alpha_1}^{(1)})\one \\ & 
\otimes \cdots \otimes (Y_V)_{k_{1}^{(n+1)}}(v_{1}^{(n+1)})\cdots (Y_V)_{k_{\alpha_{n+1}}^{(n+1)}}(v_{\alpha_{n+1}}^{(n+1)})\one)(\zeta_1, ..., \zeta_{n+1})\\
 \cdot \prod_{j=1}^{\alpha_0}  (z_j^{(0)})^{-k_j^{(0)}-1} & \prod_{j=1}^n  (z_1^{(j)}-\zeta_j)^{-k_1^{(j)}-1} \cdots  (z_{\alpha_j}^{(j)}-\zeta_j)^{-k_{\alpha_j}^{(j)}-1} 
\end{aligned}\label{Arg-Series-3}
\end{align}
is the expansion of the $\overline{W}$-rational function $f$ which do not involve any expansion of $\zeta$. More precisely, it is the expansion of $f$ in the following way:
\begin{enumerate}
    \item For $j=0, 1, ..., n, 1\leq s < t\leq \alpha_0$, expand the negative powers of $z_s^{(0)}-z_t^{(j)}$ as power series in $z_t^{(j)}$. When $j\geq 1$, further expand positive powers of $z_t^{(j)} = \zeta_j + (z_t^{(j)} - \zeta_j)$ as polynomials in $\zeta_j$ and $z_t^{(j)}-\zeta_j$. 
    \item For $j = 1,..., n, 1\leq s< t\leq \alpha_j$, expand negative powers of $z_s^{(j)}-z_t^{(j)}=(z_s^{(j)} - \zeta_j) - (z_t^{(j)}- \zeta_j)$ as power series in $z_t^{(j)}-\zeta_j$. 
    \item For $1\leq j < k \leq n, 1\leq s \leq \alpha_j, 1\leq t \leq \alpha_k$, expand the negative powers of $z_s^{(j)}-z_t^{(k)} = (\zeta_j-\zeta_k) + (z_s^{(j)} - \zeta_j - z_t^{(k)} + \zeta_k)$ as a power series of $(z_s^{(j)} - \zeta_j - z_t^{(k)} + \zeta_k)$, then further expand the positive powers of $(z_s^{(j)} - \zeta_j - z_t^{(k)} + \zeta_k)$ as polynomials of $(z_s^{(j)} - \zeta_j)$ and  $(z_t^{(k)} - \zeta_k)$. 
\end{enumerate}
In other words, the series (\ref{Arg-Series-3}) converges absolutely to the $\overline{W}$-valued rational function (\ref{Arg-Series-1-Limit}) in the region
\begin{align*}
& |z_1^{(0)}|> \cdots > |z_{\alpha_0}^{(0)}| > |\zeta_i|+|z_1^{(i)}-\zeta_i|, i = 1, ..., n+1;\\
& |z_1^{(i)}-\zeta_i|>\cdots > |z_{\alpha_i}^{(i)}-\zeta_i|, i = 1, ..., n+1; \\
& |z_s^{(i)}-\zeta_i - z_t^{(j)}+\zeta_j|<|\zeta_i-\zeta_j|, 1\leq i < j \leq n+1, 1\leq s \leq \alpha_i, 1\leq t \leq \alpha_j.
\end{align*}
This proves that $\Phi \circ_i E_V^{(2)}$ is in $\hat{C}_{m-1}^{n+1}(V, W)$. 
\item First, from the assumption that $\Phi$ is composable with $m$ vertex operators, we know that the series 
\begin{align*} 
Y_W^L(v_1^{(0)}, z_1^{(0)})& \cdots Y_W^L(v_{l_0}^{(0)}, z_{l_0}^{(0)})
\cdot Y_W^L(v_1^{(1)}, z_1^{(1)}) \cdots Y_W^L(v_{\alpha_1}^{(1)}, z_{\alpha_1}^{(1)})\cdot  Y_W^{s(R)}(v_{l_0+1}^{(0)}, z_{l_0+1}^{(0)}) \cdots Y_W^{s(R)}(v_{\alpha_0}^{(0)}, z_{\alpha_0}^{(0)})\\
\cdot \Phi(& Y_V(v_1^{(2)}, z_1^{(2)}-\zeta_2)\cdots Y_V(v_{\alpha_2}^{(2)}, z_{\alpha_2}^{(1)}-\zeta_2)\one \\
\otimes & \cdots \\
\otimes & Y_V(v_1^{(n+1)}, z_1^{(n+1)}-\zeta_{n+1})\cdots Y_V(v_{\alpha_{n+1}}^{(n+1)}, z_{\alpha_{n+1}}^{(n+1)}-\zeta_{n+1})\one)(\zeta_2, ...,  \zeta_{n+1})
\end{align*}
converges absolutely to a $\overline{W}$-valued rational function \begin{align} 
E(& Y_W^L(v_1^{(0)}, z_1^{(0)}) \cdots Y_W^L(v_{l_0}^{(0)}, z_{l_0}^{(0)})
\cdot Y_W^L(v_1^{(1)}, z_1^{(1)}) \cdots Y_W^L(v_{\alpha_1}^{(1)}, z_{\alpha_1}^{(1)})\nn\\ 
&\cdot  Y_W^{s(R)}(v_{l_0+1}^{(0)}, z_{l_0+1}^{(0)}) \cdots Y_W^{s(R)}(v_{\alpha_0}^{(0)}, z_{\alpha_0}^{(0)})\nn\\
& \cdot \Phi( Y_V(v_1^{(2)}, z_1^{(2)}-\zeta_2)\cdots Y_V(v_{\alpha_2}^{(2)}, z_{\alpha_2}^{(1)}-\zeta_2)\one \nn\\
&\qquad \otimes  \cdots \nn\\
&\qquad \otimes  Y_V(v_1^{(n+1)}, z_1^{(n+1)}-\zeta_{n+1})\cdots Y_V(v_{\alpha_{n+1}}^{(n+1)}, z_{\alpha_{n+1}}^{(n+1)}-\zeta_{n+1})\one)(\zeta_2, ...,  \zeta_{n+1}))\label{Arg-Series-4-Limit}
\end{align}
that does not depend on $\zeta_2, ..., \zeta_{n+1}$ in the region where
\begin{align*}
    & |z_1^{(0)}| > \cdots > |z_{l_0}^{(0)}| > |z_1^{(1)}| > \cdots > |z_{\alpha_1}^{(1)}| \\ & \qquad > |z_{l_0+1}^{(0)}| > \cdots > |z_{\alpha_0}^{(0)}|>|\zeta_i|+|z_{1}^{(i)} - \zeta_i|, i=2, ..., n+1;  \\
    & |z_1^{(i)}-\zeta_i|>\cdots > |z_{\alpha_i}^{(i)}-\zeta_i|, i = 2, ..., n+1; \\
    & |z_s^{(i)}-\zeta_i - z_t^{(j)}+\zeta_j|<|\zeta_i-\zeta_j|, 2\leq i < j \leq n+1, 1\leq s \leq \alpha_i, 1\leq t \leq \alpha_j.
\end{align*}
By commutativity of $Y_W^L$ and $Y_W^{s(R)}$, Theorem \ref{ConvRef} Part (1), and the expansion of $\overline{W}$-valued rational functions, the series 
\begin{align*} 
Y_W^L(v_1^{(0)}, z_1^{(0)})& \cdots Y_W^L(v_{l_0}^{(0)}, z_{l_0}^{(0)})
\cdot  Y_W^{s(R)}(v_{l_0+1}^{(0)}, z_{l_0+1}^{(0)}) \cdots Y_W^{s(R)}(v_{\alpha_0}^{(0)}, z_{\alpha_0}^{(0)})\cdot Y_W^L(v_1^{(1)}, z_1^{(1)}) \cdots Y_W^L(v_{\alpha_1}^{(1)}, z_{\alpha_1}^{(1)})\\
\cdot \Phi(& Y_V(v_1^{(2)}, z_1^{(2)}-\zeta_2)\cdots Y_V(v_{\alpha_2}^{(2)}, z_{\alpha_2}^{(1)}-\zeta_2)\one \\
\otimes & \cdots \\
\otimes & Y_V(v_1^{(n+1)}, z_1^{(n+1)}-\zeta_{n+1})\cdots Y_V(v_{\alpha_{n+1}}^{(n+1)}, z_{\alpha_{n+1}}^{(n+1)}-\zeta_{n+1})\one)(\zeta_2, ...,  \zeta_{n+1})
\end{align*}
converges absolutely to (\ref{Arg-Series-4-Limit}) in the region 
\begin{align*}
    & |z_1^{(0)}| > \cdots > |z_{l_0}^{(0)}| > |z_{l_0+1}^{(0)}| > \cdots > |z_{\alpha_0}^{(0)}|\\
    & \qquad > |z_1^{(1)}| > \cdots > |z_{\alpha_1}^{(1)}|  >|\zeta_i|+|z_{1}^{(i)} - \zeta_i|, i=2, ..., n+1;  \\
    & |z_1^{(i)}-\zeta_i|>\cdots > |z_{\alpha_i}^{(i)}-\zeta_i|, i = 2, ..., n+1; \\
    & |z_s^{(i)}-\zeta_i - z_t^{(j)}+\zeta_j|<|\zeta_i-\zeta_j|, 2\leq i < j \leq n+1, 1\leq s \leq \alpha_i, 1\leq t \leq \alpha_j.
\end{align*}
For convenience, we denote this region by $A$. 

By associativity of $Y_W^L$, Theorem \ref{ConvRef} Part (2), and the expansion of $\overline{W}$-valued rational functions, we have the following equality 
\begin{align*}
& Y_W^L(v_1^{(1)}, z_1^{(1)})\cdots  Y_W^L(v_{\alpha_1}^{(1)}, z_{\alpha_1}^{(1)})\overline{w} = Y_W^L(Y_V(v_1^{(1)}, z_1^{(1)}-\zeta_1)\cdots Y_V(v_{\alpha_1}^{(1)}, z_{\alpha_1}^{(1)}-\zeta_1)\one, \zeta_1)\overline{w},
\end{align*}
where $\overline{w}$ is the expansion of a $\overline{W}$-valued rational function as the following series
\begin{align*}
\overline{w} = \Phi(& Y_V(v_1^{(2)}, z_1^{(2)}-\zeta_2)\cdots Y_V(v_{\alpha_2}^{(2)}, z_{\alpha_2}^{(2)}-\zeta_2)\one \\
\otimes & \cdots \\
\otimes & Y_V(v_1^{(n+1)}, z_1^{(n+1)}-\zeta_{n+1})\cdots Y_V(v_{\alpha_{n+1}}^{(n+1)}, z_{\alpha_{n+1}}^{(n+1)}-\zeta_{n+1})\one)(\zeta_2, ...,  \zeta_{n+1})
\end{align*}
This equation holds in the region where
\begin{align*}
    & |z_1^{(1)}|>\cdots > |z_{\alpha_1}^{(1)}| > |\zeta_1| >  |\zeta_i| + |z_1^{(i)}-\zeta_i- z_1^{(1)} + \zeta_1|, i =2, ..., n+1; \\
    & |z_1^{(i)}-\zeta_i| > \cdots > |z_{\alpha_i}^{(i)}-\zeta_i|, i = 1, ..., n+1.  
\end{align*}
For convenience, we denote this region by $B$. It is easy to check that $A\cap B$ is nonempty, in which the series
\begin{align*} 
Y_W^L(v_1^{(0)}, z_1^{(0)})& \cdots Y_W^L(v_{l_0}^{(0)}, z_{l_0}^{(0)})
\cdot  Y_W^{s(R)}(v_{l_0+1}^{(0)}, z_{l_0+1}^{(0)}) \cdots Y_W^{s(R)}(v_{\alpha_0}^{(0)}, z_{\alpha_0}^{(0)}) \\
\cdot  Y_W^L&(Y_V(v_1^{(1)}, z_1^{(1)}-\zeta_1)\cdots Y_V(v_{\alpha_1}^{(1)}, z_{\alpha_1}^{(1)}-\zeta_1)\one, \zeta_1)\\
\cdot \Phi(& Y_V(v_1^{(2)}, z_1^{(2)}-\zeta_2)\cdots Y_V(v_{\alpha_2}^{(2)}, z_{\alpha_2}^{(1)}-\zeta_2)\one \\
\otimes & \cdots \\
\otimes & Y_V(v_1^{(n+1)}, z_1^{(n+1)}-\zeta_{n+1})\cdots Y_V(v_{\alpha_{n+1}}^{(n+1)}, z_{\alpha_{n+1}}^{(n+1)}-\zeta_{n+1})\one)(\zeta_2, ...,  \zeta_{n+1})
\end{align*}
converges absolutely to (\ref{Arg-Series-4-Limit}).

Now we use the definition of $E_W^{(1, 0)}\circ_2 \Phi$, i.e., 
\begin{align*}
& (E_W^{(1, 0)}\circ_2 \Phi)(Y_V(v_1^{(1)}, z_1^{(1)}-\zeta_1) \cdots Y_V(v_{\alpha_1}^{(1)}, z_{\alpha_1}^{(1)} - \zeta_1)\one \otimes \cdots \\ & \qquad\qquad \quad  \otimes  Y_V(v_1^{(n+1)}, z_1^{(n+1)}-\zeta_{n+1})\cdots Y_V(v_{\alpha_{n+1}}^{(n+1)}, z_{\alpha_{n+1}}^{(n+1)}-\zeta_{n+1})\one)(\zeta_1, ..., \zeta_{n+1}) \\
= & Y_W^L(Y_V(v_1^{(1)}, z_1^{(1)}-\zeta_1) \cdots Y_V(v_{\alpha_1}^{(1)}, z_{\alpha_1}^{(1)} - \zeta_1)\one, \zeta_1) \\
& \cdot \Phi(Y_V(v_1^{(2)}, z_1^{(2)}-\zeta_2) \cdots Y_V(v_{\alpha_2}^{(2)}, z_{\alpha_2}^{(2)} - \zeta_2)\one \otimes \cdots \otimes Y_V(v_1^{(n+1)}, z_1^{(n+1)}-\zeta_{n+1})\cdots Y_V(v_{\alpha_{n+1}}^{(n+1)}, z_{\alpha_{n+1}}^{(n+1)}-\zeta_{n+1})\one )\\
& \qquad \qquad (\zeta_2, ..., \zeta_{n+1})
\end{align*}
where the right hand side is the expansion of the $\overline{W}$-valued rational function in the following region:
\begin{align*}
& |\zeta_1| > |\zeta_i| + |z_1^{(i)}- \zeta_i-z_1^{(1)} + \zeta_1|, i = 2, ..., n+1;\\
& |z_1^{(i)}-\zeta_i|>\cdots > |z_{\alpha_i}^{(i)}-\zeta_i|, i = 1, ..., n+1; \\
& |z_s^{(i)}-\zeta_i - z_t^{(j)}+\zeta_j|<|\zeta_i-\zeta_j|, 1\leq i < j \leq n+1, 1\leq s \leq \alpha_i, 1\leq t \leq \alpha_j.
\end{align*}
For convenience, we denote this region by $C$. It is easy to check that the region $A \cap B\cap C$ is nonempty. So in this region, the series
\begin{align}
Y_W^L(v_1^{(0)} , z_1^{(0)})\cdots &Y_W^L(v_{l_0}^{(0)}, z_{l_0}^{(0)})Y_W^{s(R)}(v_{l_0+1}^{(0)}, z_{l_0+1}^{(0)})\cdots Y_W^{s(R)}(v_{\alpha_0}^{(0)}, z_{\alpha_0}^{(0)})\nn\\
\cdot (E_W^{(1, 0)}\circ_2 \Phi)&(Y_V(v_1^{(1)}, z_1^{(1)}-\zeta_1)\cdots Y_V(v_{\alpha_1}^{(1)}, z_{\alpha_1}^{(1)}-\zeta_1)\one\nn \\ 
 &\otimes \cdots \otimes Y_V(v_1^{(n+1)}, z_1^{(n+1)}-\zeta_{n+1})\cdots Y_V(v_{\alpha_{n+1}}^{(1)}, z_{\alpha_{n+1}}^{(n+1)}-\zeta_{n+1})\one )(\zeta_1, ..., \zeta_{n+1})\label{Arg-Series-4}
\end{align}
converges absolutely to the same (\ref{Arg-Series-4-Limit}). Now we analyze series carefully, and conclude that the series is obtained from expanding the $\overline{W}$-rational function in the following way:
\begin{enumerate}
\item for $1 \leq i < j \leq \alpha_0$, the negative powers of $z_j^{(0)} - z_k^{(0)}$ is expanded in positive powers of $z_k^{(0)}$;
\item for $1\leq j \leq \alpha_0, i = 1, ..., n+1, 1\leq t \leq \alpha_i$, the negative powers of $z_j^{(0)} - z_t^{(i)}$ is first expanded in positive powers of $z_t^{(i)} = \zeta_i + z_t^{(i)} - \zeta_i$, then further expand the positive powers as polynomials in $\zeta_i$ and $z_t^{(i)} - \zeta_i$;
\item for $ i = 1, ..., n+1, 1\leq j < k \leq \alpha_i$, the negative powers of $z_j^{(i)} - z_k^{(i)} = (z_j^{(i)} - \zeta_i) - (z_k^{(i)} - \zeta_i)$ is expanded as positive powers of $(z_k^{(i)}-\zeta_i)$. 
\item for $1\leq i < j \leq n+1, 1\leq s \leq \alpha_i, 1\leq t \leq \alpha_j$, the negative powers of $z_s^{(i)} - z_t^{(j)}=(\zeta_i -\zeta_j) + (z_s^{(i)} - \zeta_i - z_t^{(j)} + \zeta_j)$ is expanded as positive powers of $(z_s^{(i)} - \zeta_i - z_t^{(j)} + \zeta_j)$, then further expand as polynomials of $z_s^{(i)} - \zeta_i$ and $z_t^{(j)} - \zeta_j$. 
\end{enumerate} 
Thus the series (\ref{Arg-Series-4}) converges absolutely to (\ref{Arg-Series-4-Limit}) in the region 
\begin{align*}
    &|z_1^{(0)}|> \cdots > |z_{\alpha_0}^{(0)}| > |\zeta_1| + |z_1^{(1)} - \zeta_1|;\\
    &|z_1^{(i)}-\zeta_i| > \cdots > |z_{\alpha_i}^{(i)}- \zeta_i|, i = 1, ..., n+1;\\
    &|\zeta_i - \zeta_j|> |z_s^{(i)} - \zeta_i-z_t^{(j)}+\zeta_j|, 1\leq i < j \leq n+1, 1\leq s \leq \alpha_i, 1\leq t \leq \alpha_j.
\end{align*}
This proves that $E_W^{(1, 0)}\circ_2 \Phi$ is in $\hat{C}_{m-1}^{n+1}(V, W)$. 

\item The third conclusion is proved similarly to the second. We should not repeat the whole argument, but just summarize the key steps.
\begin{enumerate}
    \item We use the associativity of $Y_W^{s(R)}$ and Theorem \ref{ConvRef} Part (2) to see that
\begin{align*}
& Y_W^{s(R)}(v_{\alpha_{n+1}}^{(n+1)}, z_{\alpha_{n+1}}^{(n+1)})\cdots  Y_W^{s(R)}(v_1^{(n+1)}, z_1^{(n+1)})\overline{w} \\
& \qquad = Y_W^{s(R)}(Y_V(v_{1}^{(n+1)}, z_1^{(n+1)}-\zeta_{n+1})\cdots Y_V(v_{\alpha_{n+1}}^{(n+1)}, z_{\alpha_{n+1}}^{(n+1)}-\zeta_{n+1})\one, \zeta_{n+1})\overline{w}
\end{align*}
where
\begin{align*}
\overline{w} = E(\Phi(& Y_V(v_1^{(1)}, z_1^{(1)}-\zeta_1)\cdots Y_V(v_{\alpha_1}^{(1)}, z_{\alpha_1}^{(1)}-\zeta_1)\one \\
\otimes & \cdots \\
\otimes & Y_V(v_1^{(n)}, z_1^{(n)}-\zeta_{n})\cdots Y_V(v_{\alpha_{n}}^{(n)}, z_{\alpha_{n}}^{(n)}-\zeta_{n})\one)(\zeta_1, ...,  \zeta_{n}))\in  \overline{W}
\end{align*}
and $z_1^{(1)}, ..., z_{\alpha_{n+1}}^{(n+1)}\in \C$ such that
\begin{align*}
&|z_{\alpha_{n+1}}^{(n+1)}| > \cdots > |z_1^{(n+1)}|>|z_s^{(i)}|, i = 1, ..., n, s = 1, ..., \alpha_i;\\
&|\zeta_{n+1}| > |z_1^{(n+1)} - \zeta_{n+1}| + |z_s^{(i)}|, i = 1, ..., n, s = 1, ..., \alpha_i;\\
&|z_1^{(n+1)}-\zeta_{n+1}| > \cdots > |z_{\alpha_{n+1}}^{(n+1)}-\zeta_{n+1}|;\\
&z_s^{(i)} \neq z_t^{(j)}, 1\leq i \leq j \leq n+1, s = 1,..., \alpha_i, t = 1, ..., \alpha_j, s \neq t \text{ when }i = j 
\end{align*}
for every $\overline{w}\in \overline{W}$ and $z_1^{(n+1)}, ..., z_{\alpha_{n+1}}^{(n+1)}\in \C$ such that the left-hand-side converges absolutely. 
\item With the commutativity of $Y_W^L$ and $Y_W^{s(R)}$ operators, Theorem \ref{ConvRef} Part (1) and analytic continuation similar as those in previous proofs, we can prove that the series
\begin{align*}
& \begin{aligned}
Y_W^L(u_1^{(0)} , z_1^{(0)})\cdots &Y_W^L(u_{l_0}^{(0)}, z_{l_0}^{(0)})Y_W^{s(R)}(u_{l_0+1}^{(0)}, z_{l_0+1}^{(0)})\cdots Y_W^{s(R)}(u_{\alpha_0}^{(0)}, z_{\alpha_0}^{(0)})\\
\cdot (E_W^{(0,1)}\circ_2 \Phi)&(Y_V(v_1^{(1)}, z_1^{(1)}-\zeta_1)\cdots Y_V(v_{\alpha_1}^{(1)}, z_{\alpha_1}^{(1)}-\zeta_1)\one \\ 
 &\otimes \cdots \otimes Y_V(v_1^{(n+1)}, z_1^{(n+1)}-\zeta_{n+1})\cdots Y_V(v_{\alpha_{n+1}}^{(1)}, z_{\alpha_{n+1}}^{(n+1)}-\zeta_{n+1})\one )(\zeta_1, ..., \zeta_{n+1})
\end{aligned}\\
& \begin{aligned} =\sum_{\substack{k_1^{(0)}, ..., k_{\alpha_0}^{(0)}\\ ... k_1^{(n+1)}, ..., k_{\alpha_{n+1}}^{(n+1)} \in \Z } } (Y_W^L)_{k_1^{(0)}}(u_1) & \cdots (Y_W^L)_{k_{l_0}^{(0)}}(u_{l_0}) (Y_W^{s(R)})_{k_{l_0+1}}(u_{l_0+1}) \cdots (Y_W^{s(R)})_{k_{\alpha_0}}(u_{\alpha_0})\\ 
\cdot (E_W^{(0,1)}\circ_2 \Phi)(& (Y_V)_{k_{1}^{(1)}}(v_{1}^{(1)})\cdots (Y_V)_{k_{\alpha_1}^{(1)}}(v_{\alpha_1}^{(1)})\one \\ & 
\otimes \cdots \otimes (Y_V)_{k_{1}^{(n+1)}}(v_{1}^{(n+1)})\cdots (Y_V)_{k_{\alpha_{n+1}}^{(n+1)}}(v_{\alpha_{n+1}}^{(n+1)})\one)(\zeta_1, ..., \zeta_{n+1})\\
 \cdot \prod_{j=1}^{\alpha_0}  z_j^{(0)} \prod_{j=1}^n  ( & z_1^{(j)}-\zeta_j)^{-k_1^{(j)}-1} \cdots  (z_{\alpha_j}^{(j)}-\zeta_j)^{-k_{\alpha_j}^{(j)}-1} 
\end{aligned}
\end{align*}
is the expansion of the $\overline{W}$-rational function 
\begin{align*} 
E(Y_W^L(u_1^{(0)}, z_1^{(0)})& \cdots Y_W^L(u_{l_0}^{(0)}, z_{l_0}^{(0)})
\cdot Y_W^L(v_1^{(1)}, z_1^{(1)}) \cdots Y_W^{s(R)}(u_{l_0+1}^{(0)}, z_{l_0+1}^{(0)})\\
 & \cdot Y_W^{s(R)}(v_{1}^{(n+1)}, z_{1}^{(n+1)}) \cdots Y_W^{s(R)}(v_{\alpha_{n+1}}^{(n+1)}, z_{\alpha_{n+1}}^{(n+1)})\\
& \begin{aligned}\cdot \Phi(& Y_V(v_1^{(1)}, z_1^{(1)}-\zeta_1)\cdots Y_V(v_{\alpha_1}^{(1)}, z_{\alpha_1}^{(1)}-\zeta_1)\one \\
\otimes & \cdots \\
\otimes & Y_V(v_1^{(n)}, z_1^{(n)}-\zeta_{n})\cdots Y_V(v_{\alpha_{n}}^{(n)}, z_{\alpha_{n}}^{(n)}-\zeta_{n})\one)(\zeta_1, ...,  \zeta_{n}))
\end{aligned}
\end{align*}
in the region
\begin{align*}
& |z_1^{(0)}|> \cdots > |z_{\alpha_0}^{(0)}| > |\zeta_i|+|z_t^{(i)}-\zeta_i|, i = 1, ..., n+1, t = 1, ..., \alpha_i;\\
& |z_1^{(i)}-\zeta_i|>\cdots > |z_{\alpha_i}^{(i)}-\zeta_i|, i = 1, ..., n+1; \\
& |z_s^{(i)}-\zeta_i - z_t^{(j)}+\zeta_j|<|\zeta_i-\zeta_j|, 1\leq i < j \leq n+1, 1\leq s \leq \alpha_i, 1\leq t \leq \alpha_j.
\end{align*}
\end{enumerate}
\end{enumerate}
\end{proof}

\subsection{The coboundary operators and the cochain complex}

For $m, n \in \Z_+$, we define the coboundary operator as follows
$$\hat{\delta}_m^n: \hat{C}_m^n(V, W) \to \hat{C}_{m-1}^{n+1}(V, W)$$
by
$$ \hat{\delta}_m^n\Phi =  E_W^{(1, 0)}\circ_2 \Phi + \sum_{i=1}^{n} (-1)^i\Phi\circ_i E_V^{(2)} + (-1)^{n+1} E_W^{(0,1)}\circ_2 \Phi
$$
More explicitly, $\hat{\delta}^n_m \Phi$ is a map from $V^{\otimes (n+1)}$ to $\widetilde{W}_{z_1, ..., z_{n+1}}$ satisfying
\begin{align*}
&(({\hat{\delta}}_m^n\Phi)(v_1\otimes \cdots \otimes v_{n+1}))(z_1, ..., z_{n+1}) \\
= &  E(Y_W^L(v_1, z_1) (\Phi(v_2\otimes \cdots \otimes v_{n+1}))(z_2, ..., z_{n+1})) \\
- &  E((\Phi(Y_V(v_1, z_1-\zeta_1)Y_V(v_2, z_2-\zeta_1)\one\otimes v_3 \otimes \cdots \otimes v_{n+1}))(\zeta_1, z_3, ..., z_{n+1}))\\
+ &  E((\Phi(v_1\otimes Y_V(v_2, z_2-\zeta_2)Y_V(v_3, z_3-\zeta_2)\one \otimes v_4 \otimes \cdots \otimes v_{n+1}) (z_1, \zeta_2, z_4, ..., z_{n+1}))\\
-&  \cdots \cdots \cdots \cdots \cdots \cdots\\
+ & (-1)^i E\left(\begin{aligned} (\Phi & (v_1 \otimes\cdots \otimes v_{i-1} \otimes Y_V(v_i, z_i - \zeta_i)Y_V(v_{i+1}, z_{i+1}-\zeta_i)\one \otimes v_{i+2} \otimes \cdots \otimes v_{n+1}))\\
&(z_1, ..., z_{i-1}, \zeta_i, z_{i+2}, ..., z_{n+1})\end{aligned}\right)\\
+ & \cdots \cdots \cdots \cdots \cdots \cdots\\
+ & (-1)^n E(\Phi(v_1 \otimes \cdots \otimes v_{n-1} \otimes Y_V(v_n, z_n - \zeta_n)Y_V(z_{n+1}-\zeta_n)\one))(z_1, ..., z_{n-1}, \zeta_n)) \\
+ & (-1)^{n+1} E(Y_W^{s(R)}(u_{n+1}, z_{n+1})(\Phi(v_1\otimes \cdots v_n))(z_1, ..., z_n))
\end{align*}
One can also write 
\begin{align*}
&((\hat{\delta}_m^n(\Phi))(v_1\otimes \cdots \otimes v_{n+1}))(z_1, ..., z_{n+1}) \\
= &  E(Y_W^L(v_1, z_1) (\Phi(v_2\otimes \cdots \otimes v_{n+1}))(z_2, ..., z_{n+1})) \\
+ & \sum_{i=1}^n(-1)^i E\left(\begin{aligned} (\Phi & (v_1 \otimes\cdots \otimes v_{i-1} \otimes Y_V(v_i, z_i - \zeta_i)Y_V(v_{i+1}, z_{i+1}-\zeta_i)\one \otimes v_{i+2} \otimes \cdots \otimes v_{n+1}))\\
&(z_1, ..., z_{i-1}, \zeta_i, z_{i+2}, ..., z_{n+1})\end{aligned}\right)\\
+ & (-1)^{n+1} E(Y_W^{s(R)}(u_{n+1}, z_{n+1})(\Phi(v_1\otimes \cdots v_n))(z_1, ..., z_n))
\end{align*}
provided that $i=1$ and $i=n$ term in the sum is well-understood. 

When $n=0$, $\Phi$ is represented by a vector $w$. In this case,
$$((\hat{\delta}_m^0 \Phi)(v_1))(z_1) = E(Y_W^L(v_1, z_1)w) - E(Y_W^{s(R)}(v_1, z_1)w)$$

When $n=1$, we have
\begin{align*}& (\hat{\delta}^1_m(\Phi)(v_1\otimes v_2))(z_1, z_2) \\
&   = E(Y_W^L(v_1, z_1)(\Phi(v_2))(z_2)) -  E (\Phi(Y_V(u_1, z_1-\zeta)Y_V(u_2, z_2-\zeta)\one))(\zeta) + E(Y^{s(R)}_W(v_2, z_2)(\Phi(v_1))(z_1))
\end{align*}
\begin{rema}
It is crucial that in all the explicit summations above, we are not adding series, but adding the analytic extensions of the sums of these series, which are $\overline{W}$-valued rational functions, aka., $\overline{W}$-elements that depends on $z_1, ..., z_{n+1}$. Those series refuse to be added up directly because the region of convergence of the first series and that of the last series do not intersect. 
\end{rema}


\begin{thm}
For every $m\in \Z_+, n\in \N$, $\hat{\delta}_m^n (\hat{C}_m^n(V, W)) \subseteq \hat{C}_{m-1}^{n+1}(V, W)$.
\end{thm}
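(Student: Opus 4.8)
The plan is to deduce the statement directly from Proposition \ref{deltasummands} together with the observation that $\hat{C}_{m-1}^{n+1}(V, W)$ is a linear subspace of the space of all linear maps $V^{\otimes(n+1)} \to \widetilde{W}_{z_1, \ldots, z_{n+1}}$. By definition $\hat{\delta}_m^n\Phi = E_W^{(1,0)}\circ_2\Phi + \sum_{i=1}^n (-1)^i\, \Phi\circ_i E_V^{(2)} + (-1)^{n+1} E_W^{(0,1)}\circ_2\Phi$ is a fixed finite linear combination of the three families of maps treated in Proposition \ref{deltasummands}, each of which already lies in $\hat{C}_{m-1}^{n+1}(V, W)$. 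Hence it suffices to show that this set is stable under finite linear combinations, i.e.\ that it is a linear subspace.

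First I would dispose of the easy parts. The target $\widetilde{W}_{z_1, \ldots, z_{n+1}}$ is itself a vector space: given two $\overline{W}$-valued rational functions, passing to a common denominator $\prod_{i<j}(z_i - z_j)^{p_{ij}}$ shows that their sum again satisfies both conditions of Definition \ref{arcWRat}. The $D$-derivative property and the $\d$-conjugation property are each expressed by linear identities in $\Phi$ (equating $\langle w', \cdot\rangle$ of one expression built from $\Phi$ with a partial derivative, respectively a rescaling, of another), so they pass to finite linear combinations term by term. Thus $\hat{C}_0^{n+1}(V,W)$ is a subspace, and it remains to treat composability.

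The substance of the argument is that composability with $m-1$ vertex operators is preserved under addition, and for this I would exploit the key structural feature of Definition \ref{Composable}: the region of absolute convergence prescribed there depends only on $n+1$, on the composition type $(\alpha_0, \ldots, \alpha_{n+1})$ and $l_0$, and on the chosen points, but \emph{not} on the particular map being composed. Consequently, if $\Psi_1, \Psi_2 \in \hat{C}_{m-1}^{n+1}(V,W)$ and we fix the same data $\alpha_\bullet, l_0$ and the same inputs $v_s^{(i)}$, the two composability series attached to $\Psi_1$ and $\Psi_2$ converge absolutely in one and the same region. Forming the composability series is linear in the map, since evaluating the map on each monomial of the tensor-factor series $Y_V(v_1^{(i)}, z_1^{(i)}-\zeta_i)\cdots\one$ and applying the components $(Y_W^L)_k, (Y_W^{s(R)})_k$ are both linear operations; hence the composability series attached to $\Psi_1 + \Psi_2$ is the term-by-term sum of the two. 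A finite sum of absolutely convergent $\overline{W}$-valued series is again absolutely convergent (test against each $w'\in W'$), and its limit is the sum of the two rational functions, which is rational, independent of $\zeta_1, \ldots, \zeta_{n+1}$, and has poles only along $z_s^{(i)} = z_t^{(j)}$.

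The one point requiring care, which I expect to be the main (though mild) obstacle, is the bound on pole orders demanded by Definition \ref{Composable}: the order of the pole $z_s^{(i)} = z_t^{(j)}$ must be bounded by a constant depending only on $v_s^{(i)}$ and $v_t^{(j)}$. Because $\Psi_1$ and $\Psi_2$ are evaluated on exactly the same inputs, each furnishes such a bound, and the pole order of a sum of rational functions is at most the maximum of the individual orders; thus the maximum of the two bounds still depends only on $v_s^{(i)}, v_t^{(j)}$ and serves as a bound for the sum. Iterating this over the $n+2$ summands of $\hat{\delta}_m^n\Phi$ then yields the theorem. I would close by noting that this common-region phenomenon is precisely the reason the coboundary is defined through the $E$-notation: the three families of Proposition \ref{deltasummands} have composability series in a common region, even though the underlying defining series of the outer summands of $\hat{\delta}_m^n\Phi$—the one led by $Y_W^L$ and the one ended by $Y_W^{s(R)}$—converge in disjoint regions and can only be added after analytic continuation.
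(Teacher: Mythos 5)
Your proposal is correct and takes essentially the same route as the paper: the paper's proof of this theorem is precisely the one-line deduction from Proposition \ref{deltasummands}, with the linearity of $\hat{C}_{m-1}^{n+1}(V,W)$ (stability of the convergence region, of the rationality and $\zeta$-independence of the sums, and of the pole-order bounds under finite sums) left implicit. Your write-up simply makes those implicit closure properties explicit, and does so correctly.
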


\begin{proof}
This follows from Proposition \ref{deltasummands}. 
\end{proof}

\begin{thm}\label{delta-square-zero}
For $m, n\in \Z_+$, $\hat{\delta}_{m-1}^{n+1}\circ \hat{\delta}_m^n = 0$
\end{thm}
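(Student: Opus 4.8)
The plan is to recognize $\hat\delta^n_m$ as the alternating sum of $n+2$ coface operations and to reduce the identity $\hat\delta^{n+1}_{m-1}\circ\hat\delta^n_m=0$ to a family of cosimplicial relations, exactly as in the classical proof that the Hochschild differential squares to zero. Concretely, for $\Phi\in\hat C^n_m(V,W)$ I set
$$d^0\Phi=E_W^{(1,0)}\circ_2\Phi,\quad d^i\Phi=\Phi\circ_i E_V^{(2)}\ (1\le i\le n),\quad d^{n+1}\Phi=E_W^{(0,1)}\circ_2\Phi,$$
so that $\hat\delta^n_m\Phi=\sum_{i=0}^{n+1}(-1)^i d^i\Phi$. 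By Proposition \ref{deltasummands} each $d^i$ sends $\hat C^n_m(V,W)$ into $\hat C^{n+1}_{m-1}(V,W)$, so all composites below are defined as elements of $\widetilde W_{z_1,\dots,z_{n+2}}$. Expanding $\hat\delta^{n+1}_{m-1}\hat\delta^n_m\Phi=\sum_{i,j}(-1)^{i+j}d^j d^i\Phi$ and substituting the relation $d^j d^i=d^i d^{j-1}$ for $i<j$, the double sum telescopes to $0$ by the usual sign bookkeeping; so the entire content of the theorem is the verification of these cosimplicial relations, understood as equalities of $\overline{W}$-valued rational functions.

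I would then dispose of the relations case by case, according to which structural axiom each encodes. For $1\le i<j-1$ the two internal insertions act on disjoint pairs of tensor slots and commute outright. For $j=i+1$ the relation $d^{i+1}d^i=d^i d^i$ is precisely the associativity of $Y_V$ packaged into $E_V^{(2)}$: inserting $E_V^{(2)}$ into adjacent slots in the two possible orders yields, after analytic continuation, the two sides of the MOSVA associativity of Definition \ref{DefMOSVA}(5) inside the $i$-th tensor factor, which agree as rational functions. The relations linking $d^0$ to the internal faces reduce, via the definition of $E_W^{(1,0)}\circ_2$ and Theorem \ref{ConvRef}(2), to the left associativity $E(Y_W^L(u_1,z_1)Y_W^L(u_2,z_2)f)=E(Y_W^L(Y_V(u_1,z_1-\zeta)u_2,\zeta)f)$ (the non-adjacent instances being immediate commutations); the relations linking the top face $d^{n+2}$ to the internal faces reduce similarly, via Theorem \ref{ConvRef}(3), to the right associativity for $Y_W^{s(R)}$. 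Finally, the extreme relation $d^{n+2}d^0=d^0 d^{n+1}$ pairs the two cross terms $E(Y_W^L(v_1,z_1)Y_W^{s(R)}(v_{n+2},z_{n+2})\Phi(\cdots))$ and $E(Y_W^{s(R)}(v_{n+2},z_{n+2})Y_W^L(v_1,z_1)\Phi(\cdots))$; these coincide by the commutativity of $Y_W^L$ and $Y_W^{s(R)}$, i.e.\ Theorem \ref{ConvRef}(1). This is the one place where the noncommutative bimodule structure genuinely enters, and it is exactly the commutativity supplied by the compatibility axiom.

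The real difficulty is analytic rather than algebraic. Each $d^i$ is defined through the $E$-operation, namely the analytic continuation of a series that converges only in one prescribed region, and the two sides of a given cosimplicial relation are assembled from series whose regions of convergence are in general disjoint — precisely the phenomenon stressed in the introduction. Hence a relation such as $d^j d^i\Phi=d^i d^{j-1}\Phi$ cannot be checked by manipulating series in a shared domain. Instead I would, for each relation, use Theorem \ref{ConvRef} together with the composability propositions of Section~3 to exhibit a single nonempty open region in which both iterated series converge after the admissible reorderings, show that on that region both are expansions of one and the same $\overline{W}$-valued rational function, and then invoke the uniqueness of the rational function determined by Definition \ref{arcWRat} to extend the equality to all of $F_{n+2}\C$. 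Once every cosimplicial relation holds at the level of $\overline{W}$-valued rational functions, the alternating-sign cancellation is purely formal and yields $\hat\delta^{n+1}_{m-1}\circ\hat\delta^n_m=0$.
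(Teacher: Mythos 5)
Your proposal is correct and follows essentially the same route as the paper: the paper's proof likewise expands $\hat{\delta}_{m-1}^{n+1}\circ\hat{\delta}_m^n$ into coface compositions and proves, as separate lemmas, exactly the cosimplicial identities you list (the $d^0$--internal relations via Theorem \ref{ConvRef}(2) and the vacuum identity, the internal--internal relations via independence of the auxiliary $\zeta,\eta$ and associativity of $Y_V$, the top-face relations via Theorem \ref{ConvRef}(3), and the extreme $Y_W^L$/$Y_W^{s(R)}$ relation via the commutativity in Theorem \ref{ConvRef}(1)), with each equality established as an identity of $\overline{W}$-valued rational functions by matching expansions in a common region and analytically continuing. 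The only cosmetic difference is bookkeeping: the paper groups the terms into four blocks (I)--(IV) and cancels blockwise, while you invoke the standard telescoping of $\sum_{i,j}(-1)^{i+j}d^jd^i$; these are the same computation.
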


\begin{proof}
Let $\Phi\in \hat{C}^n_m(V, W)$. We compute as follows:
\begin{align*}
& \hat{\delta}^{n+1}_{m-1} (\hat{\delta}_m^n \Phi)\\
= &  E_W^{(1, 0)}\circ_2 \hat{\delta}^n_m\Phi + \sum_{i=1}^{n+1} (-1)^i(\hat{\delta}^n_m\Phi)\circ_i E_V^{(2)} + E_W^{(0,1)}\circ_0 \hat{\delta}^n_m\Phi \\
= & E_W^{(1, 0)}\circ_2 (E_W^{(1, 0)} \circ_2 \Phi + \sum_{j=1}^{n} \Phi\circ_j E_V^{(2)} + E_W^{(0,1)} \circ_0 \Phi) \\
& + \sum_{i=1}^{n+1}(-1)^i\left((E_W^{(1, 0)} \circ_2 \Phi)\circ_i E_V^{(2)} + \sum_{j=1}^{n} (-1)^j (\Phi\circ_j E_V^{(2)})\circ_i E_V^{(2)} + (-1)^{n+1}(E_W^{(0, 1)} \circ_1 \Phi)\circ_i E_V^{(2)} \right)\\ 
& + (-1)^{n+2} \left( E_W^{(0, 1)} \circ_1 (E_W^{(1,0)} \circ_2 \Phi)+ \sum_{j=1}^n (-1)^j E_W^{(0,1)} \circ_1 (\Phi \circ_j E_V^{(2)}) + (-1)^{n+1} E_W^{(0,1)} \circ_1 (E_W^{(0,1)} \circ_1 \Phi)\right)
\end{align*}
We rearrange the terms and indexes to write $\hat{\delta}_{m-1}^{n+1}(\hat{\delta}_m^n \Phi)$ as
\begin{align*}
& E_W^{(1,0)}\circ_2 (E_W^{(1,0)}\circ_2 \Phi) + \sum_{i=1}^{n} (-1)^{i} E_W^{(1,0)} \circ_2 (\Phi\circ_i E_V^{(2)}) +\sum_{i=1}^{n+1}(-1)^i(E_W^{(1,0)} \circ_2 \Phi)\circ_i E_V^{(2)} & \textrm{(I)}\\
& + (-1)^{n+1} E_W^{(1,0)}\circ_2 (E_W^{(0,1)} \circ_1 \Phi) + (-1)^{n+2}E_W^{(0,1)} \circ_1 (E_W^{(1,0)} \circ_2 \Phi) & \textrm{(II)}\\  
& + \sum_{i=1}^{n+1} \sum_{j=1}^{n} (-1)^i(-1)^j (\Phi\circ_j E_V^{(2)})\circ_i E_V^{(2)}& \textrm{(III)}\\ 
& + \sum_{i=1}^{n+1}(-1)^{n+1+i}(E_W^{(0,1)} \circ_1 \Phi)\circ_i E_V^{(2)} +  \sum_{i=1}^n (-1)^{n+2+i} E_W^{(0,1)} \circ_1 (\Phi\circ_i E_V^{(2)})- E_W^{(0,1)} \circ_1 (E_W^{(0,1)} \circ_1 \Phi) & \textrm{(IV)}
\end{align*}
We argue that (I), (II), (III) and (IV) are all zero. 

For (I), we need the following lemma
\begin{lemma}
$\displaystyle{E_W^{(1,0)}\circ_2 (E_W^{(1, 0)} \circ_2 \Phi) = (E_W^{(1, 0)}\circ_2 \Phi)\circ_1 E_V^{2}}$
\end{lemma}
\begin{proof}
For any $v_1, ..., v_{n+2}\in V, (z_1, ..., z_{n+2})\in F_n \C$, we have 
\begin{align*}
& [E_W^{(1, 0)}\circ_2 (E_W^{(1, 0)} \circ_2 \Phi)(v_1\otimesdots v_{n+2})](z_1, ..., z_{n+2})\\
= & [E_W^{(1, 0)} (v_1; [(E_W^{(1, 0)}\circ_2 \Phi)(v_2\otimesdots v_{n+2})](z_2,...,z_{n+2}) )](z_1)\\
= & E(Y_W^L(v_1, z_1) [E_W^{(1, 0)}(v_2; [\Phi(v_3\otimesdots v_{n+2})](z_3,...,z_{n+2}))](z_2)\\
= & E(Y_W^L(v_1, z_1) Y_W^L(v_2, z_2) [\Phi(v_3\otimesdots v_{n+2})](z_3,..., z_{n+2}),
\end{align*}
and
\begin{align*}
& [(E_W^{(1, 0)}\circ_2 \Phi)\circ_1 E_V^{(2)}](v_1\otimesdots v_{n+2})(z_1, ..., z_{n+2})\\
= & [(E_W^{(1,0)}\circ_2 \Phi)(Y_V(v_1, z_1-\zeta)Y_V(v_2, z_2-\zeta)\one \otimes v_3\otimesdots v_{n+2})](\zeta, z_3,..., z_{n+2}) \\
= & [E_W^{(1, 0)}(Y_V(v_1, z_1-\zeta)Y_V(v_2, z_2-\zeta)\one ; [\Phi(v_3\otimesdots v_{n+2})](z_3, ..., z_{n+2}))](\zeta)\\
= & E(Y_W^L(Y_V(v_1, z_1-\zeta)Y_V(v_2, z_2-\zeta)\one, \zeta)[\Phi(v_3\otimesdots v_{n+2})](z_3, ..., z_{n+2})) 
\end{align*}
It follows from Theorem \ref{ConvRef} Part (2) and the identity property of vacuum that these rational functions are equal. 
\end{proof}

We also need the following lemma
\begin{lemma}
$E_W^{(1,0)} \circ_2 (\Phi\circ_i E_V^{(2)}) = (E_W^{(1,0)} \circ_2 \Phi )\circ_{i+1} E_V^{(2)}$
\end{lemma}

\begin{proof}
For any $v_1, ..., v_{n+2}\in V, (z_1, ..., z_{n+2})\in F_n \C$, we have 
\begin{align*}
& [E_W^{(1,0)} \circ_2 (\Phi\circ_i E_V^{(2)})(v_1\otimesdots v_{n+2})](z_1, ..., z_{n+2}) \\
= & [E_W^{(1,0)}(v_1; [(\Phi\circ_i E_V^{(2)})(v_2\otimesdots v_{n+2})](z_2, ..., z_{n+2}))](z_1)\\
= & [E_W^{(1,0)}(v_1; [\Phi(v_2\otimesdots [E_V^{(2)}(v_{i+1}, v_{i+2})](z_{i+1}-\zeta, z_{i+2}-\zeta)\otimesdots  v_{n+2})](z_2, ..., \zeta, ... , z_{n+2}))](z_1)\\
= & E(Y_W^L(v_1, z_1) [\Phi(v_2 \otimesdots Y_V(v_{i+1}, z_{i+1}-\zeta)Y_V(v_{i+2}, z_{i+2}-\zeta)\one \otimesdots v_{n+2})](z_2, ..., \zeta, ..., z_{n+2})
\end{align*}
and 
\begin{align*}
& [(E_W^{(1,0)} \circ_2 \Phi)\circ_{i+1} E_V^{(2)}(v_1\otimesdots v_{n+2})](z_1, ..., z_{n+2}) \\
= & [(E_W^{(1,0)}\circ_2 \Phi)(v_1\otimesdots [E_V^{(2)}(v_{i+1}, v_{i+2})](z_{i+1}-\zeta, z_{i+2}-\zeta)\otimesdots v_{n+2})](z_1, ..., \zeta, ..., z_{n+2})\\
= & [E_W^{(1,0)}(v_1; [\Phi(v_2\otimesdots [E_V^{(2)}(v_{i+1}, v_{i+2})](z_{i+1}-\zeta, z_{i+2}-\zeta)\otimesdots  v_{n+2})](z_2, ..., \zeta, ... , z_{n+2}))](z_1)\\
= & E(Y_W^L(v_1, z_1) [\Phi(v_2 \otimesdots Y_V(v_{i+1}, z_{i+1}-\zeta)Y_V(v_{i+2}, z_{i+2}-\zeta)\one \otimesdots v_{n+2})](z_2, ..., \zeta, ..., z_{n+2})
\end{align*}
So they are equal. 
\end{proof}
So the second sum and the third sum without $i=1$ differs by an index shift and a $(-1)$ factor. That way they cancels out.

For (II), we need the following lemma:
\begin{lemma}
$$E_W^{(1,0)} \circ_2 (E_W^{(0,1)}\circ_1 \Phi) = E_W^{(0,1)} \circ_1 (E_W^{(1,0)}\circ_2 \Phi)$$
\end{lemma}
\begin{proof}
For any $v_1, ..., v_{n+2}\in V, (z_1, ..., z_{n+2})\in F_n \C$, we have 
\begin{align*}
& [E_W^{(1,0)}\circ_2 (E_W^{(0,1)} \circ_1 \Phi)(v_1 \otimes \cdots \otimes v_{n+2})](z_1, ..., z_{n+2})\\
= & [E_W^{(1, 0)}(v_1; [(E_W^{(0,1)}\circ_1 \Phi)(v_2 \otimesdots v_{n+2})](z_2, ..., z_{n+2}) ](z_1) \\
= & [E_W^{(1, 0)}(v_1; [E_W^{(0,1)}([\Phi(v_2 \otimesdots v_{n+1})](z_2, ..., z_{n+1}); v_{n+2})(z_{n+2}) ](z_1) \\
= & E(Y_W^L(v_1, z_1)Y_W^{s(R)}(v_{n+2}, z_{n+2}) [\Phi(v_2\otimesdots v_{n+1})](z_2, ..., z_{n+1}), 
\end{align*}
and 
\begin{align*}
& [E_W^{(0,1)}\circ_1 (E_W^{(1,0)} \circ_2 \Phi)(v_1 \otimes \cdots \otimes v_{n+2})](z_1, ..., z_{n+2})\\
= & [E_W^{(0, 1)}([(E_W^{(1,0)}\circ_2 \Phi)(v_1 \otimesdots v_{n+1})](z_1, ..., z_{n+1}); v_{n+2} )](z_{n+2}) \\
= & [E_W^{(0, 1)}([E_W^{(1,0)}(v_1; [\Phi(v_2 \otimesdots v_{n+1})](z_2, ..., z_{n+1}))(z_1); v_{n+2})(z_{n+2}) \\
= & E(Y_W^{s(R)}(v_{n+2}, z_{n+2})Y_W^L(v_1, z_1) [\Phi(v_2\otimesdots v_{n+1})](z_2, ..., z_{n+1}), 
\end{align*}
It follows from Theorem \ref{ConvRef} Part (1) that these rational functions are equal. 
\end{proof}
So the two terms in (II) add up to zero. 

For (III), We need the following lemmas
\begin{lemma}
If $j\leq i-1$, then 
$$(\Phi \circ_j E_V^{(2)})\circ_i E_V^{(2)}= (\Phi\circ_{i-1} E_V^{(2)})\circ_j E_V^{(2)}. $$
\end{lemma}

\begin{proof}
Consider the case when $j<i-1$. Then for any $v_1, ..., v_{n+2}\in V, (z_1, ..., z_{n+2})\in F_n \C$, we have 
\begin{align*}
& [(\Phi \circ_j E_V^{(2)})\circ_i E_V^{(2)}(v_1 \otimes \cdots \otimes v_{n+2})](z_1, ..., z_{n+2})\\
= & [\Phi \circ_j E_V^{(2)}(v_1 \otimesdots [E_V^{(2)}(v_i, v_{i+1})](z_i - \zeta, z_{i+1}-\zeta) \otimesdots v_{n+2})](z_1, ..., \zeta, ..., z_{n+2})\\
= & [\Phi(v_1 \otimesdots [E_V^{(2)}(v_j, v_{j+1})](z_{j}-\eta, z_{j+1} -\eta)\\
& \qquad \otimesdots [E_V^{(2)}(v_i, v_{i+1})](z_i - \zeta, z_{i+1}-\zeta) \otimesdots v_{n+2})](z_1, ..., \eta, ...,  \zeta, ..., z_{n+2})\\
= & E([\Phi(v_1 \otimesdots Y_V(v_j, z_j - \eta)Y_V(v_{j+1}, z_{j+1} -\eta)\one\\
& \qquad \otimesdots Y_V(v_i, z_i - \zeta)Y_V(v_{i+1}, z_{i+1} -\zeta)\one \otimesdots v_{n+2})](z_1, ..., \eta, ...,  \zeta, ..., z_{n+2}))
\end{align*}
and
\begin{align*}
& [(\Phi\circ_{i-1} E_V^{(2)})\circ_j E_V^{(2)}(v_1 \otimes \cdots \otimes v_{n+2})](z_1, ..., z_{n+2})\\
= & [\Phi \circ_{i-1} E_V^{(2)}(v_1 \otimesdots [E_V^{(2)}(v_j, v_{j+1})](z_j - \zeta, z_{j+1}-\zeta) \otimesdots v_{n+2})](z_1, ..., \zeta, ..., z_{n+2})\\
= & [\Phi(v_1 \otimesdots [E_V^{(2)}(v_j, v_{j+1})](z_{j}-\zeta, z_{j+1} -\zeta)\\
& \qquad \otimesdots [E_V^{(2)}(v_i, v_{i+1})](z_i - \eta, z_{i+1}-\eta) \otimesdots v_{n+2})](z_1, ..., \zeta, ...,  \eta, ..., z_{n+2})\\
= & E([\Phi(v_1 \otimesdots Y_V(v_j, z_j - \zeta)Y_V(v_{j+1}, z_{j+1} -\zeta)\one\\
& \qquad \otimesdots Y_V(v_i, z_i - \eta)Y_V(v_{i+1}, z_{i+1} -\eta)\one \otimesdots v_{n+2})](z_1, ..., \zeta, ...,  \eta, ..., z_{n+2}))
\end{align*}
Since the resulting $\overline{W}$-valued rational functions are independent of the choice of $\zeta$ and $\eta$, they are equal. 

Now consider the case when $j=i-1$. Then for any $v_1, ..., v_{n+2}\in V, (z_1, ..., z_{n+2})\in F_n \C$, we compute the left-hand-side as follows:
\begin{align*}
& [(\Phi \circ_{i-1} E_V^{(2)})\circ_i E_V^{(2)}(v_1 \otimes \cdots \otimes v_{n+2})](z_1, ..., z_{n+2})\\
= & [\Phi \circ_{i-1} E_V^{(2)}(v_1 \otimesdots [E_V^{(2)}(v_i, v_{i+1})](z_i - \zeta, z_{i+1}-\zeta) \otimesdots v_{n+2})](z_1, ..., \zeta, ..., z_{n+2})\\
= & [\Phi(v_1 \otimesdots [E_V^{(2)}(v_{i-1}, [E_V^{(2)}(v_i, v_{i+1})](z_i - \zeta, z_{i+1}-\zeta))](z_{i-1}-\eta, \zeta -\eta)\\
& \qquad\,   \otimesdots v_{n+2})](z_1, ..., z_{i-2}, \eta, z_{i+2}..., z_{n+2})\\
= & E([\Phi(v_1 \otimesdots Y_V(v_{i-1}, z_{i-1} - \eta)Y_V(Y_V(v_i, z_i-\zeta)Y_V(v_{i+1}, z_{i+1}-\zeta)\one, \zeta-\eta)\one\\
& \qquad\quad\, \otimes v_{i+2} \otimesdots v_{n+2})](z_1, ..., z_{i-2}, \eta, z_{i+2}..., z_{n+2})),\\
= & E([\Phi(v_1 \otimesdots Y_V(v_{i-1}, z_{i-1} - \eta)Y_V(v_i, z_i-\eta)Y_V(v_{i+1}, z_{i+1}-\eta)Y_V(\one, \zeta-\eta)\one\\
& \qquad\quad\, \otimes v_{i+2} \otimesdots v_{n+2})](z_1, ..., z_{i-2}, \eta, z_{i+2}..., z_{n+2})),\\
= & E([\Phi(v_1 \otimesdots Y_V(v_{i-1}, z_{i-1} - \eta)Y_V(v_i, z_i-\eta)Y_V(v_{i+1}, z_{i+1}-\eta)\one \otimesdots v_{n+2})]\\
& \quad (z_1, ..., z_{i-2}, \eta, z_{i+2}..., z_{n+2}))
\end{align*}
where the fourth equality follows from the associativity in $V$, the fifth equality follows from the identity property of the vacuum. Also by Definition \ref{Composable}, the resulting rational function is independent of $\eta$. 

Now we compute the right-hand-side as follows:
\begin{align*}
& [(\Phi\circ_{i-1} E_V^{(2)})\circ_{i-1} E_V^{(2)}(v_1 \otimes \cdots \otimes v_{n+2})](z_1, ..., z_{n+2})\\
= & [\Phi \circ_{i-1} E_V^{(2)}(v_1 \otimesdots [E_V^{(2)}(v_{i-1}, v_{i})](z_{i-1} - \zeta, z_{i}-\zeta) \otimesdots v_{n+2})](z_1, ..., z_{i-2}, \zeta, z_{i+1}, ..., z_{n+2})\\
= & [\Phi(v_1 \otimesdots [E_V^{(2)}([E_V^{(2)}(v_{i-1}, v_{i})](z_{i-1}-\zeta, z_{i} -\zeta), v_{i+1})](\zeta-\eta, z_{i+1}-\eta)\\
& \qquad\, \otimesdots v_{n+2})](z_1, ..., z_{i-2}, \eta, z_{i+2}, ..., z_{n+2})\\
= & E([\Phi(v_1 \otimesdots Y_V(Y_V(v_{i-1}, z_{i-1} - \zeta)Y_V(v_i, z_i -\zeta)\one, \zeta-\eta)Y_V(v_{i+1}, z_{i+1} -\eta)\one\\
& \qquad\quad\, \otimesdots v_{n+2})](z_1, ..., z_{i-2}, \eta, z_{i+2}, ..., z_{n+2}))\\
= & E([\Phi(v_1 \otimesdots Y_V(v_{i-1}, z_{i-1} - \eta)Y_V(v_i, z_i -\eta)Y_V(\one, \zeta-\eta)Y_V(v_{i+1}, z_{i+1} -\eta)\one\\
& \qquad\quad\, \otimesdots v_{n+2})](z_1, ..., z_{i-2}, \eta, z_{i+2}, ..., z_{n+2}))\\
= & E([\Phi(v_1 \otimesdots Y_V(v_{i-1}, z_{i-1} - \eta)Y_V(v_i, z_i -\eta)Y_V(v_{i+1}, z_{i+1} -\eta)\one\otimesdots v_{n+2})]\\
& \quad (z_1, ..., z_{i-2}, \eta, z_{i+2}, ..., z_{n+2}))
\end{align*}
where the fourth equality follows from the associativity extended to $\overline{V}$-valued rational functions (see Theorem \ref{ConvRef} Part (2)), the fifth equality follows from the identity property of the vacuum. Also by Definition \ref{Composable}, the resulting rational function is independent of $\eta$. So the left-hand-side and the right-hand-side are equal. 

\end{proof}

\begin{lemma}
If $j\geq i$, then
$$(\Phi\circ_j E_V^{(2)})\circ_i E_V^{(2)} = (\Phi \circ_i E_V^{(2)})\circ_{j+1} E_V^{(2)}.$$
\end{lemma}

\begin{proof}
Consider the case when $j> i$. Then for any $v_1, ..., v_{n+2}\in V, (z_1, ..., z_{n+2})\in F_n \C$, we have 
\begin{align*}
& [(\Phi \circ_j E_V^{(2)})\circ_i E_V^{(2)}(v_1 \otimes \cdots \otimes v_{n+2})](z_1, ..., z_{n+2})\\
= & [\Phi \circ_j E_V^{(2)}(v_1 \otimesdots [E_V^{(2)}(v_i, v_{i+1})](z_i - \zeta, z_{i+1}-\zeta) \otimesdots v_{n+2})](z_1, ..., \zeta, ..., z_{n+2})\\
= & [\Phi(v_1 \otimesdots [E_V^{(2)}(v_i, v_{i+1})](z_i - \zeta, z_{i+1}-\zeta) \\
& \qquad \otimesdots [E_V^{(2)}(v_j, v_{j+1})](z_{j}-\eta, z_{j+1} -\eta) \otimesdots v_{n+2})](z_1, ..., \zeta, ...,  \eta, ..., z_{n+2})\\
= & E([\Phi(v_1 \otimesdots Y_V(v_i, z_i - \zeta)Y_V(v_{i+1}, z_{i+1} -\zeta)\one\\
& \qquad \otimesdots Y_V(v_j, z_j - \eta)Y_V(v_{j+1}, z_{j+1} -\eta)\one \otimesdots v_{n+2})](z_1, ..., \zeta, ...,  \eta, ..., z_{n+2}))
\end{align*}
and
\begin{align*}
& [(\Phi\circ_{i-1} E_V^{(2)})\circ_j E_V^{(2)}(v_1 \otimes \cdots \otimes v_{n+2})](z_1, ..., z_{n+2})\\
= & [\Phi \circ_{i-1} E_V^{(2)}(v_1 \otimesdots [E_V^{(2)}(v_j, v_{j+1})](z_j - \zeta, z_{j+1}-\zeta) \otimesdots v_{n+2})](z_1, ..., \zeta, ..., z_{n+2})\\
= & [\Phi(v_1 \otimesdots [E_V^{(2)}(v_i, v_{i+1})](z_i - \zeta, z_{i+1}-\zeta) \\
& \qquad \otimesdots [E_V^{(2)}(v_j, v_{j+1})](z_{j}-\eta, z_{j+1} -\eta) \otimesdots v_{n+2})](z_1, ..., \zeta, ...,  \eta, ..., z_{n+2})\\
= & E([\Phi(v_1 \otimesdots Y_V(v_i, z_i - \zeta)Y_V(v_{i+1}, z_{i+1} -\zeta)\one\\
& \qquad \otimesdots Y_V(v_j, z_j - \eta)Y_V(v_{j+1}, z_{j+1} -\eta)\one \otimesdots v_{n+2})](z_1, ..., \zeta, ...,  \eta, ..., z_{n+2}))
\end{align*} 
They are equal because the resulting $\overline{W}$-valued rational functions are independent of $\zeta$ and $\eta$. 

Now consider the case when $j=i$. Then for any $v_1, ..., v_{n+2}\in V, (z_1, ..., z_{n+2})\in F_n \C$, we compute the left-hand-side as follows: 
\begin{align*}
& [(\Phi\circ_{i} E_V^{(2)})\circ_{i} E_V^{(2)}(v_1 \otimes \cdots \otimes v_{n+2})](z_1, ..., z_{n+2})\\
= & [\Phi \circ_{i} E_V^{(2)}(v_1 \otimesdots [E_V^{(2)}(v_{i}, v_{i+1})](z_{i} - \zeta, z_{i+1}-\zeta) \otimesdots v_{n+2})](z_1, ..., z_{i-1}, \zeta, z_{i+2}, ..., z_{n+2})\\
= & [\Phi(v_1 \otimesdots [E_V^{(2)}([E_V^{(2)}(v_{i}, v_{i+1})](z_{i}-\zeta, z_{i+1} -\zeta), v_{i+2})](\zeta-\eta, z_{i+2}-\eta)\\
& \qquad\, \otimesdots v_{n+2})](z_1, ..., z_{i-1}, \eta, z_{i+3}, ..., z_{n+2})\\
= & E([\Phi(v_1 \otimesdots Y_V(Y_V(v_{i}, z_{i} - \zeta)Y_V(v_{i+1}, z_{i+1} -\zeta)\one, \zeta-\eta)Y_V(v_{i+2}, z_{i+2} -\eta)\one\\
& \qquad\quad\, \otimesdots v_{n+2})](z_1, ..., z_{i-1}, \eta, z_{i+3}, ..., z_{n+2}))\\
= & E([\Phi(v_1 \otimesdots Y_V(v_{i}, z_{i} - \eta)Y_V(v_{i+1}, z_{i+1} -\eta)Y_V(\one, \zeta-\eta)Y_V(v_{i+2}, z_{i+2} -\eta)\one\\
& \qquad\quad\, \otimesdots v_{n+2})](z_1, ..., z_{i-1}, \eta, z_{i+3}, ..., z_{n+2}))\\
= & E([\Phi(v_1 \otimesdots Y_V(v_{i}, z_{i} - \eta)Y_V(v_{i+1}, z_{i+1} -\eta)Y_V(v_{i+2}, z_{i+2} -\eta)\one\otimesdots v_{n+2})]\\
& \quad (z_1, ..., z_{i-1}, \eta, z_{i+3}, ..., z_{n+2}))
\end{align*}
where the fourth equality follows from the associativity extended to $\overline{V}$-valued rational functions (see Theorem \ref{ConvRef} Part (2)), the fifth equality follows from the identity property of the vacuum. Also by Definition \ref{Composable}, the resulting rational function is independent of $\eta$. 

Now we compute the right-hand-side as follows
\begin{align*}
& [(\Phi \circ_{i} E_V^{(2)})\circ_{i+1} E_V^{(2)}(v_1 \otimes \cdots \otimes v_{n+2})](z_1, ..., z_{n+2})\\
= & [\Phi \circ_{i-1} E_V^{(2)}(v_1 \otimesdots [E_V^{(2)}(v_{i+1}, v_{i+2})](z_{i+1} - \zeta, z_{i+2}-\zeta) \otimesdots v_{n+2})](z_1, ..., \zeta, ..., z_{n+2})\\
= & [\Phi(v_1 \otimesdots [E_V^{(2)}(v_{i}, [E_V^{(2)}(v_{i+1}, v_{i+2})](z_{i+1} - \zeta, z_{i+2}-\zeta))](z_{i}-\eta, \zeta -\eta)\\
& \qquad\,   \otimesdots v_{n+2})](z_1, ..., z_{i-1}, \eta, z_{i+3}..., z_{n+2})\\
= & E([\Phi(v_1 \otimesdots Y_V(v_{i}, z_{i} - \eta)Y_V(Y_V(v_{i+1}, z_{i+1}-\zeta)Y_V(z_{i+2}-\zeta)\one, \zeta-\eta)\one\\
& \qquad\quad\, \otimes v_{i+3} \otimesdots v_{n+2})](z_1, ..., z_{i-1}, \eta, z_{i+3}..., z_{n+2})),\\
= & E([\Phi(v_1 \otimesdots Y_V(v_{i}, z_{i} - \eta)Y_V(v_{i+1}, z_{i+1}-\eta)Y_V(z_{i+2}-\eta)Y_V(\one, \zeta-\eta)\one\\
& \qquad\quad\, \otimes v_{i+3} \otimesdots v_{n+2})](z_1, ..., z_{i-1}, \eta, z_{i+3}..., z_{n+2})),\\
= & E([\Phi(v_1 \otimesdots Y_V(v_{i}, z_{i} - \eta)Y_V(v_{i+1}, z_{i+1}-\eta)Y_V(z_{i+2}-\eta)\one\otimes v_{i+3} \otimesdots v_{n+2})]\\
& \quad (z_1, ..., z_{i-1}, \eta, z_{i+3}..., z_{n+2})).
\end{align*}
where the fourth equality follows from the associativity in $V$, the fifth equality follows from the identity property of the vacuum. The resulting $\overline{W}$-valued rational function is independent of $\eta$. So the left-hand-side and the right-hand-side are equal. 
\end{proof}

Once we proved these two lemmas, we write (III) as $$\sum_{i=2}^{n+1}\sum_{j=1}^{i-1} (-1)^{i+j} (\Phi\circ_j E_V^{(2)})\circ_i E_V^{(2)} + \sum_{i=1}^n \sum_{j=i}^{n} (-1)^{i+j} (\Phi\circ_j E_V^{(2)})\circ_i E_V^{(2)}$$
Here the first sum starts from $i=2$ because when $i=1$, the inner sum does not exist. Similarly the second sum ends at $i=n$ because when $i=n+1$, the inner sum does not exist. The first sum is computed as follows
\begin{align*}
& \sum_{i=2}^{n+1} \sum_{j=1}^{i-1} (-1)^{i+j} (\Phi\circ_j E_V^{(2)})\circ_i E_V^{(2)} \\
= & \sum_{i=2}^{n+1} \sum_{j=1}^{i-1} (-1)^{i+j} (\Phi\circ_{i-1}  E_V^{(2)})\circ_j E_V^{(2)} & \text{use the identity above}\\
= &  \sum_{j=1}^n\sum_{i=j+1}^{n+1} (-1)^{i+j} (\Phi\circ_{i-1} E_V^{(2)})\circ_j E_V^{(2)} & \text{change the order of summation}\\
= &  \sum_{i=1}^n\sum_{j=i+1}^{n+1}(-1)^{i+j} (\Phi\circ_{j-1}  E_V^{(2)})\circ_i E_V^{(2)} & \text{interchange $i$ and $j$}\\
= &  \sum_{i=1}^n\sum_{j=i}^{n}(-1)^{i+j+1} (\Phi\circ_{j}  E_V^{(2)})\circ_i E_V^{(2)} & \text{shift the index $j$}
\end{align*}
So the first sum is precisely the negative of the second sum. Thus the two sums add up to be zero.

For (IV), we need the following lemma 
\begin{lemma}
$$(E_W^{(0, 1)}\circ_1 \Phi)\circ_{n+1} E_V^{(2)} = E_W^{(0, 1)} \circ_1 (E_W^{(0, 1)} \circ_1 \Phi)$$
\end{lemma}
\begin{proof}
For any $v_1, ..., v_{n+2}\in V, (z_1, ..., z_{n+2})\in F_n \C$, we compute the left-hand-side as follows: 
\begin{align*}
&[(E_W^{(0, 1)}\circ_1 \Phi)\circ_{n+1} E_V^{(2)}(v_1\otimesdots v_{n+2})](z_1, ..., z_{n+2}) \\ 
= &  [E_W^{(0,1)}\circ_1 \Phi(v_1\otimesdots v_{n} \otimes [E_V^{(2)}(v_{n+1}, v_{n+2})](z_{n+1}-\zeta, z_{n+2}-\zeta))](z_1, ..., z_n, \zeta)\\
= & [E_W^{(0, 1)}([\Phi(v_1\otimesdots v_n)](z_1, ..., z_n); [E_V^{(2)}(v_{n+1}, v_{n+2})](z_{n+1}-\zeta, z_{n+2}-\zeta))](\zeta)\\
= & E(Y_W^{s(R)}(Y_V(v_{n+1}, z_{n+1}-\zeta)Y_V(v_{n+2}, z_{n+2}-\zeta)\one, \zeta)[\Phi(v_1, ..., v_n)](z_1,...,z_n))\\
= & E(Y_W^{s(R)}(v_{n+2}, z_{n+2})Y_W^{s(R)}(v_{n+1}, z_{n+1})Y_W^{s(R)}(\one, \zeta)[\Phi(v_1, ..., v_n)](z_1,...,z_n))\\
= & E(Y_W^{s(R)}(v_{n+2}, z_{n+2})Y_W^{s(R)}(v_{n+1}, z_{n+1})[\Phi(v_1, ..., v_n)](z_1,...,z_n)),
\end{align*}
where the fourth equality follows from the associativity of $Y_W^{s(R)}$ extended to $\overline{W}$-valued rational functions (see Theorem \ref{ConvRef} Part (3)). The fifth equality follows from the identity property of vacuum. 

Now we compute the right-hand-side as follows: 
\begin{align*}
&[E_W^{(0, 1)}\circ_1 (E_W^{(0,1)}\circ_1 \Phi)(v_1\otimesdots v_{n+2})](z_1, ..., z_{n+2}) \\ 
= &  [E_W^{(0,1)}([E_W^{(0,1)}\circ_1 \Phi(v_1\otimesdots v_{n+1})](z_1, ..., z_{n+1});, v_{n+2})](z_{n+2})\\
= &  [E_W^{(0,1)}([E_W^{(0,1)}([\Phi(v_1\otimesdots v_{n+1})](z_1, ..., z_n);, v_{n+1})](z_{n+1});, v_{n+2})](z_{n+2})\\
= &  E(Y_W^{s(R)} (v_{n+2}, z_{n+2}) Y_W^{s(R)}(v_{n+1}, z_{n+1})[\Phi(v_1\otimesdots v_n)](z_1,...z_n))
\end{align*}
So it is equal to the left-hand-side. 
\end{proof}
So the $(n+1)$-th term in the first sum cancels out with the third term. 

We also need the following lemma
\begin{lemma}
$(E_W^{(0,1)}\circ_1 \Phi) \circ_i E_V^{(2)} = E_W^{(0,1)}\circ_1 (\Phi\circ_{i} E_V^{(2)})$
\end{lemma}
\begin{proof}
For $v_1, ..., v_{n+2} \in V, (z_1, ..., z_{n+2})\in F_n \C$, 
\begin{align*}
& [(E_W^{(0,1)}\circ_1 \Phi)\circ_i E_V^{(2)}(v_1 \otimes \cdots \otimes v_{n+2})](z_1, ..., z_{n+2}) \\
= & [E_W^{(0,1)}\circ_1 \Phi(v_1 \otimesdots [E_V^{(2)}(v_i \otimes v_{i+1})](z_i-\zeta, z_{i+1}-\zeta)\otimesdots v_{n+2}](z_1, ..., \zeta, ..., z_{n+2})\\
= & [E_W^{(0, 1)}]([\Phi(v_1 \otimesdots [E_V^{(2)}(v_i \otimes v_{i+1})](z_i-\zeta, z_{i+1}-\zeta)\otimesdots v_{n+1})](z_1, ..., \zeta, ..., z_{n+1}); v_{n+2})(z_{n+2})\\
= & E(Y_W^{s(R)}(v_{n+2}, z_{n+2}) [\Phi(v_1 \otimesdots Y_V(v_i, z_i -\zeta) Y_V(v_{i+1}, z_{i+1}- \zeta) \one \otimesdots v_{n+1})](z_1, ..., \zeta, ..., z_{n+1}))
\end{align*}
and 
\begin{align*}
& [E_W^{(0, 1)}\circ_1 (\Phi \circ_i E_V^{(2)})(v_1 \otimes \cdots \otimes v_{n+2})](z_1, ..., z_{n+2}) \\
= & [E_W^{(0,1)}([\Phi\circ_i E_V^{(2)} (v_1\otimesdots v_{n+1})](z_1,..., z_{n+1}); v_{n+2})](z_{n+2})\\
= & [E_W^{(0, 1)}]([\Phi(v_1 \otimesdots [E_V^{(2)}(v_i \otimes v_{i+1})](z_i-\zeta, z_{i+1}-\zeta)\otimesdots v_{n+1})](z_1, ..., \zeta, ..., z_{n+1}); v_{n+2})(z_{n+2})\\
= & E(Y_W^{s(R)}(v_{n+2}, z_{n+2}) [\Phi(v_1 \otimesdots Y_V(v_i, z_i -\zeta) Y_V(v_{i+1}, z_{i+1}- \zeta) \one \otimesdots v_{n+1})](z_1, ..., \zeta, ..., z_{n+1}))
\end{align*}
So they are equal. 
\end{proof}
Therefore, the rest of the first sum cancels out with the second sum.

\end{proof}


\begin{rema}
We remind the readers again that all the equalities in the lemmas above are in the space of $\overline{W}$-valued rational functions. The only requirements on the parameters $z_1, ..., z_{n+1}$ is that they are mutually distinct to each other. 
\end{rema}

We have given the definitions of $\hat{C}_m^n(V, W)$ and $\hat{\delta}_m^n$ for all integers $m\geq 1, n\geq 1$. Here we discuss the case $n=0$. 

\begin{defn}
We define $\hat{C}^0(V, W)$ to be the set of \textit{vaccum-like} vectors $w\in W$, i.e., $w\in W_{(0)}$ and $D_W w=0$
\end{defn}

\begin{prop}
Let $w\in W$ be a vaccum-like vector. Then for every $v\in V, Y_W^L(v, x)w \in W[[x]], Y_W^{s(R)}(v, x)w\in W[[x]]$. 
\end{prop}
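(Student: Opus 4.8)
The plan is to establish the conclusion first for $Y_W^L$ by a short grading-plus-translation argument, and then to deduce the statement for $Y_W^{s(R)}$ for free from the opposite-algebra description recalled above. Since both $Y_W^L(v,x)w$ and $Y_W^{s(R)}(v,x)w$ are linear in $v$, I may assume $v$ is homogeneous throughout. Write
$$Y_W^L(v,x)w=\sum_{n\in\Z}w_n x^{-n-1}, \qquad w_n:=(Y_W^L)_n(v)w.$$

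First I would record two facts about the coefficients $w_n$. Using the $\d$-commutator formula for $Y_W^L$ together with $\d_W w=0$ (which holds since $w\in W_{[0]}$) and $\d_V v=(\wt v)v$, one obtains $\d_W Y_W^L(v,x)w=(\wt v)Y_W^L(v,x)w+x\frac{d}{dx}Y_W^L(v,x)w$; comparing coefficients of $x^{-n-1}$ shows that each $w_n$ is homogeneous of weight $\wt v-n-1$. The lower bound condition on $W$ then forces $w_n=0$ once $\mathrm{Re}(\wt v-n-1)$ is sufficiently negative, i.e. for all $n$ sufficiently large, so that $\{n:w_n\neq 0\}$ is bounded above. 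Second, from the $D$-commutator formula $[D_W,Y_W^L(v,x)]=\frac{d}{dx}Y_W^L(v,x)$ and $D_W w=0$ I get
$$D_W\,Y_W^L(v,x)w=\frac{d}{dx}Y_W^L(v,x)w,$$
and comparing coefficients of $x^{-n-1}$ yields the recursion $D_W w_n=-n\,w_{n-1}$.

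With these two facts the conclusion is immediate. If all $w_n$ vanish there is nothing to prove; otherwise let $N$ be the largest index with $w_N\neq 0$, which exists by the boundedness above. If $N\geq 0$, then applying the recursion with $n=N+1$ and using $w_{N+1}=0$ gives $0=D_W w_{N+1}=-(N+1)w_N$; since $N+1\geq 1\neq 0$ this forces $w_N=0$, a contradiction. Hence $N\leq -1$, so $w_n=0$ for all $n\geq 0$ and $Y_W^L(v,x)w=\sum_{n\leq -1}w_n x^{-n-1}\in W[[x]]$.

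Finally, for $Y_W^{s(R)}$ I would invoke the opposite-algebra description: $V^{op}$ is again a MOSVA, and $(W,Y_W^{s(R)},\d_W,D_W)$ is a left $V^{op}$-module with the \emph{same} operators $\d_W$ and $D_W$ (one checks $D_{V^{op}}=D_V$ and $\d_{V^{op}}=\d_V$ directly from the identity and creation properties, since $Y_V^s(u,x)\one=e^{xD_V}u$). Because the argument for $Y_W^L$ used only the $\d$-commutator formula, the $D$-commutator formula, the lower bound condition, and the hypotheses $\d_W w=0$ and $D_W w=0$, all of which hold verbatim for the left $V^{op}$-module structure, the identical computation gives $Y_W^{s(R)}(v,x)w\in W[[x]]$. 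The only step requiring care is the lower-truncation claim, which relies on $w$ being homogeneous (guaranteed by $w\in W_{[0]}$) and on the weight-counting for $(Y_W^L)_n(v)$ (resp. $(Y_W^{s(R)})_n(v)$); this is precisely where the vacuum-like hypothesis is genuinely used, and it is the main point to get right.
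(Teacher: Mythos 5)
Your proof is correct, but it runs differently from the paper's. Both arguments pivot on the same identity — the $D$-commutator formula applied to $w$ with $D_Ww=0$, giving $\frac{d}{dx}Y_W^L(v,x)w = D_W\,Y_W^L(v,x)w$ — yet the paper finishes in one stroke with an exponential trick: it observes that $\frac{d}{dx}\bigl(e^{-xD_W}Y_W^L(v,x)w\bigr)=0$, so this formal series has only a constant term, whence $Y_W^L(v,x)w=e^{xD_W}(\text{const})\in W[[x]]$; no grading information is used at all, only $D_Ww=0$. You instead extract the coefficient recursion $D_Ww_n=-nw_{n-1}$ and combine it with upper truncation of the coefficients (obtained from the $\d$-commutator formula, $\d_Ww=0$, and the lower bound condition) and a maximal-index contradiction. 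Your route is longer and uses more hypotheses, but it buys something real: the paper's multiplication by $e^{-xD_W}$ produces, in each power of $x$, an a priori infinite sum of elements all lying in a single weight space, and since the module definition here imposes no lower-truncation axiom on $Y_W^L(v,x)w$, it is exactly your $\d$-commutator/lower-bound step that makes such formal manipulations legitimate. In other words, your argument supplies the truncation that the paper's slicker computation quietly presupposes. Your handling of $Y_W^{s(R)}$ via the left $V^{op}$-module structure (checking $\d_{V^{op}}=\d_V$ and $D_{V^{op}}=D_V$) matches the paper's ``one similarly proves'' and is carried out in more detail than the paper bothers to give.
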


\begin{proof}
Fix $v\in V$. From the $D$-commutator formula, we have
$$\frac d {dx} Y_W^L(v, x)w = D_WY_W^L(v, x).$$
Thus for the series $e^{-xD_W}Y_W^L(v, x)w$, we have
$$\frac d {dx}\left(e^{-xD_W}Y_W^L(v, x)w \right)= -e^{-xD_W}D_WY_W^L(v, x)w + e^{-xD_W} D_WY_W^L(v, x) =0, $$
which then shows $e^{-xD_W}Y_W^L(v, x)w$ has only constant term. Thus $Y_W^L(v, x)w \in W[[x]]$. One similarly proves the conclusion for $Y_W^{s(R)}$. 
\end{proof}



\begin{defn}
We define $\hat{\delta}^0: \hat{C}^0(V, W)\to \hom(V, \widetilde{W}_z)$ by the following: for $w\in \hat{C}^0(V, W)$
$$[(\hat{\delta}^0(w))(v)](z) = E(Y_W^L(v, z)w-Y_W^{s(R)}(v, z)w)$$
\end{defn}

\begin{prop}
For every $w\in \hat{C}^0(V, W)$, $\hat{\delta}^0(w) \in \hat{C}_\infty^1(V, W)$, and $\hat{\delta}^1(\hat{\delta}^0(w))=0$
\end{prop}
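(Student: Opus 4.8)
The plan is to handle the two assertions separately, reducing the membership claim to the already-established composability of the maps $E_{W,w}^{(l,n-l)}$ and reducing the cocycle identity to the extended associativity and commutativity packaged in Theorem \ref{ConvRef}. For the membership $\hat{\delta}^0(w)\in\hat{C}_\infty^1(V,W)$, I would first observe that, by definition,
$$[(\hat{\delta}^0(w))(v)](z)=E(Y_W^L(v,z)w)-E(Y_W^{s(R)}(v,z)w)=E_{W,w}^{(1,0)}(v)-E_{W,w}^{(0,1)}(v),$$
so that $\hat{\delta}^0(w)=E_{W,w}^{(1,0)}-E_{W,w}^{(0,1)}$. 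The preceding proposition supplies $Y_W^L(v,x)w\in W[[x]]$ and $Y_W^{s(R)}(v,x)w\in W[[x]]$ for every $v\in V$; moreover $D_W w=0$ together with the right $D$-commutator formula gives $\frac{d}{dx}Y_W^R(w,x)=Y_W^R(D_Ww,x)=0$, so $Y_W^R(w,x)u$ is constant in $x$ and in particular lies in $W[[x]]$. Hence $w$ satisfies exactly the hypotheses under which the relevant example shows $E_{W,w}^{(1,0)}$ and $E_{W,w}^{(0,1)}$ lie in $\hat{C}_\infty^1(V,W)$. Since the defining conditions of $\hat{C}_m^n(V,W)$ (the $D$-derivative and $\d$-conjugation properties, composability, and the pole-order bound) are each preserved under linear combinations, $\hat{C}_\infty^1(V,W)$ is a vector space and the difference lies in it.

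For $\hat{\delta}^1(\hat{\delta}^0(w))=0$, I would set $\Phi=\hat{\delta}^0(w)$ and substitute into the explicit $n=1$ coboundary formula, producing three summands. Expanding $\Phi$ and using the identity from the remark following Proposition \ref{ComposableExample-1} that an outer $E$ applied to an iterated $E$ collapses to a single $E$, the first summand becomes $E(Y_W^L(v_1,z_1)Y_W^L(v_2,z_2)w)-E(Y_W^L(v_1,z_1)Y_W^{s(R)}(v_2,z_2)w)$ and the third becomes $E(Y_W^{s(R)}(v_2,z_2)Y_W^L(v_1,z_1)w)-E(Y_W^{s(R)}(v_2,z_2)Y_W^{s(R)}(v_1,z_1)w)$. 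The middle summand, $-E(\Phi(Y_V(v_1,z_1-\zeta)Y_V(v_2,z_2-\zeta)\one)(\zeta))$, splits along the two pieces of $\Phi$; applying the extended associativity of $Y_W^L$ (Theorem \ref{ConvRef} Part (2)) and of $Y_W^{s(R)}$ (Theorem \ref{ConvRef} Part (3)) to the constant $\overline{W}$-valued function $f=w$ rewrites it as $-E(Y_W^L(v_1,z_1)Y_W^L(v_2,z_2)w)+E(Y_W^{s(R)}(v_2,z_2)Y_W^{s(R)}(v_1,z_1)w)$.

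Summing the three contributions, the $E(Y_W^L(v_1,z_1)Y_W^L(v_2,z_2)w)$ terms cancel between the first and middle summands, the $E(Y_W^{s(R)}(v_2,z_2)Y_W^{s(R)}(v_1,z_1)w)$ terms cancel between the middle and third, and what remains is $-E(Y_W^L(v_1,z_1)Y_W^{s(R)}(v_2,z_2)w)+E(Y_W^{s(R)}(v_2,z_2)Y_W^L(v_1,z_1)w)$, which vanishes by the commutativity of $Y_W^L$ and $Y_W^{s(R)}$ extended to $\overline{W}$-valued rational functions (Theorem \ref{ConvRef} Part (1)). I expect the main obstacle to be the bookkeeping in the middle summand: one must invoke Parts (2) and (3) of Theorem \ref{ConvRef} with the correct ordering convention—Part (3) reverses the order of the $Y_W^{s(R)}$ factors—and must keep in mind throughout that every identity holds as an equality of $\overline{W}$-valued rational functions on $F_2\C$ rather than of series, since the regions of convergence of the three summands are pairwise disjoint. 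All the operator-truncation facts needed to make the three $E$-expressions well defined come from the vacuum-like condition via the preceding proposition.
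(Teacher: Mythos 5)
Your proposal is correct and follows essentially the same route as the paper: the paper's own (very terse) proof establishes membership by citing the example showing $E_{W,w}^{(l,n-l)}\in\hat{C}_\infty^{n}(V,W)$ for vacuum-like $w$, and disposes of $\hat{\delta}^1(\hat{\delta}^0(w))=0$ by remarking that it is the same computation as in Theorem \ref{delta-square-zero}, which is exactly the $n=1$ cancellation pattern (via Theorem \ref{ConvRef} Parts (1)--(3)) that you carry out explicitly. Your added details --- deriving $Y_W^R(w,x)u\in W[[x]]$ from $D_Ww=0$, the closure of $\hat{C}_\infty^1(V,W)$ under linear combinations, and the order reversal in Part (3) --- are all correct fillings-in of steps the paper leaves implicit.
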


\begin{proof}
It is easy to check that $\hat{\delta}^0(w)$ satisfies the $\d$-conjugation property and $D$-derivative property. From the arguments in Example \ref{exam-n-cochain}, $\hat{\delta}^0(w)$ is composable with any numbers of vertex operators. The last conclusion follows from a computation that is essentially the same as those in Theorem \ref{delta-square-zero}. 
\end{proof}

\begin{rema}
It can be proved that the map $v \mapsto [(\hat{\delta}^0(w)](v))(0)$ is an ``inner-derivation''. The set of inner-derivations from $V$ to $W$ is isomorphic as vector space to $\hat{\delta}^0(\hat{C}^0(V, W))$. Please see details in \cite{HQ-Red} and \cite{Q-Thesis}. 
\end{rema}

Thus we proved the following theorem:

\begin{thm}
For any $m\in \Z_+$, the following sequence
$$\hat{C}^0(V, W) \xrightarrow{\hat{\delta}^0} \hat{C}_m^1(V, W) \xrightarrow{\hat{\delta}_m^1} \hat{C}_{m-1}^2(V, W) \xrightarrow{\hat{\delta}^2_m} \hat{C}_{m-2}^3(V, W) \rightarrow \cdots \rightarrow \hat{C}_0^{m+1}(V, W) $$
forms a cochain complex. For every $n\in \N$, define $\hat{C}_\infty^n(V, W) = \bigcap_{m=0}^\infty \hat{C}_m^n(V, W)$ and $\hat{\delta}_\infty^n$ to be the restriction of $\hat{\delta}^n_1$ on $\hat{C}_\infty^n(V, W)$. Obviously, the image of $\hat{\delta}_\infty^n$ is in $\hat{C}_\infty^{n+1}(V, W)$, and the following sequence
$$\hat{C}^0(V, W) \xrightarrow{\hat{\delta}^0} \hat{C}_\infty^1(V, W) \xrightarrow{\hat{\delta}^1_\infty} \hat{C}_\infty^2(V, W) \xrightarrow{\hat{\delta}^2_\infty} \hat{C}_\infty^3(V, W) \rightarrow \cdots \rightarrow \hat{C}_\infty^m(V, W) \rightarrow \cdots $$
forms a cochain complex. 
\end{thm}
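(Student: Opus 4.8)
The plan is to assemble the theorem entirely from results already in hand, treating it as the bookkeeping step that packages the genuine work done earlier. Recall that to certify that a sequence is a cochain complex one must check two things at every spot: that each coboundary map is well defined as a map between the indicated spaces, and that the composite of two consecutive coboundary maps vanishes. I would therefore organize the argument around these two checks, drawing on three established facts: (i) the preceding theorem, that $\hat\delta_m^n\bigl(\hat C_m^n(V,W)\bigr)\subseteq\hat C_{m-1}^{n+1}(V,W)$; (ii) Theorem \ref{delta-square-zero}, that $\hat\delta_{m-1}^{n+1}\circ\hat\delta_m^n=0$; and (iii) the proposition that $\hat\delta^0(w)\in\hat C_\infty^1(V,W)$ with $\hat\delta^1(\hat\delta^0(w))=0$.

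For the finite complex with $m$ fixed I would first record well-definedness. The interior segment $\hat C_m^1\to\hat C_{m-1}^2\to\cdots$ is handled termwise by fact (i): each application of $\hat\delta^n$ lowers the composability index by one, so iterating $m$ times from $\hat C_m^1$ lands successively in $\hat C_{m-1}^2,\hat C_{m-2}^3,\dots,\hat C_0^{m+1}$. The sequence stops at $\hat C_0^{m+1}$ precisely because $\hat\delta_m^n$ is defined only for $m\in\Z_+$, so no further coboundary leaves $\hat C_0^{m+1}$. The leftmost arrow is handled by fact (iii): since $\hat\delta^0(w)\in\hat C_\infty^1(V,W)\subseteq\hat C_m^1(V,W)$, it is well defined for every $m$. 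Vanishing of consecutive composites is then immediate, all interior composites being covered by fact (ii) and the single composite involving the initial map, $\hat\delta^1\circ\hat\delta^0=0$, being exactly fact (iii).

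For the stabilized complex built from $\hat C_\infty^n(V,W)=\bigcap_{m\ge 0}\hat C_m^n(V,W)$, the one point deserving care is that $\hat\delta_\infty^n$ genuinely lands in $\hat C_\infty^{n+1}(V,W)$, which is what justifies the word ``obviously'' in the statement. The key observation I would make explicit is that the formula defining $\hat\delta_m^n$, namely $E_W^{(1,0)}\circ_2\Phi+\sum_{i=1}^n(-1)^i\Phi\circ_i E_V^{(2)}+(-1)^{n+1}E_W^{(0,1)}\circ_2\Phi$, does not depend on $m$; the subscript only records how many vertex operators the input is composable with. Hence for $\Phi\in\hat C_\infty^n(V,W)$ and \emph{any} $m\in\Z_+$ we have $\Phi\in\hat C_m^n(V,W)$, so by fact (i) the single map $\hat\delta^n\Phi$ lies in $\hat C_{m-1}^{n+1}(V,W)$. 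Letting $m$ range over $\Z_+$, the index $m-1$ ranges over all of $\N$, whence $\hat\delta^n\Phi\in\bigcap_{k\ge 0}\hat C_k^{n+1}(V,W)=\hat C_\infty^{n+1}(V,W)$. The relation $\hat\delta_\infty^{n+1}\circ\hat\delta_\infty^n=0$ is then just the restriction of Theorem \ref{delta-square-zero} to the intersection, and $\hat\delta^1\circ\hat\delta^0=0$ on $\hat C^0(V,W)$ is fact (iii) once more.

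I expect no real obstacle: the whole theorem is a consequence of earlier results, and the genuine difficulty—verifying $\hat\delta^2=0$ via the interchange identities among the $\circ_i$ operations and the identity property of the vacuum—has already been absorbed into Theorem \ref{delta-square-zero}. The only place that warrants a sentence of justification rather than a bare citation is the $m$-independence of the coboundary formula, since that is what makes the passage to $\hat C_\infty^n(V,W)$ and the stabilized complex legitimate.
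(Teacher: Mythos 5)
Your proposal is correct and follows essentially the same route as the paper: the paper gives no separate argument for this theorem, treating it as the assembly of the three facts you cite (the inclusion $\hat{\delta}_m^n(\hat{C}_m^n(V,W))\subseteq\hat{C}_{m-1}^{n+1}(V,W)$, Theorem \ref{delta-square-zero}, and the proposition that $\hat{\delta}^0(w)\in\hat{C}_\infty^1(V,W)$ with $\hat{\delta}^1(\hat{\delta}^0(w))=0$). Your explicit remark that the coboundary formula is independent of $m$ — so that for $\Phi\in\hat{C}_\infty^n(V,W)$ the single element $\hat{\delta}^n\Phi$ lies in $\hat{C}_{m-1}^{n+1}(V,W)$ for every $m$ and hence in the intersection — is exactly the justification the paper compresses into the word ``obviously,'' so it is a welcome but not divergent addition.
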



\subsection{Cohomology groups}

\begin{defn} For every $n\in \N$, the \textit{$n$-th cohomology group} is defined as 
$$\hat{H}_\infty^n(V, W)=\text{ker} \hat{\delta}_\infty^n / \text{im} \hat{\delta}_\infty^{n-1}$$
\end{defn}

\begin{rema}
We can similarly define the cohomology groups $\hat{H}_m^n(V, W)$ with $\hat{C}_m^n(V, W)$. 
\end{rema}


\begin{exam}
When $n=1$, we know that $\ker \hat{\delta}_m^1$ consists of maps $f: V \to \widetilde{W}_{z}$ that are composable with $m$ vertex operators, and 
\begin{align*}E[Y_W^L(u, z_1)(f(v))(z_2)] - E[(f(Y_V(u,z_1-\zeta)Y_V(v, z_2-\zeta)\one))( \zeta)] \\
+ E[Y_W^{s(R)}(v, z_2)(f(v))(z_1)]  = 0
\end{align*}
It can be proved that for every $m\in \Z_+$, $\ker \hat{\delta}_m^1 = \ker \hat{\delta}_\infty^1$ and is linearly isomorphic to the space of derivations from $V$ to $W$. Thus 
$$H^1(V, W) \simeq \{\text{Derivation } V \to W\} / \{\text{Inner derivation } V \to W\}$$
Please see details in \cite{HQ-Red} and in \cite{Q-Thesis}. 
\end{exam}

\noindent {\small \sc Department of Mathematics, Yale University, New Haven, CT 06511}

\noindent {\em E-mail address}: fei.qi@yale.edu

\end{document}